\def\Prim{\operatorname{Prim}}
\def\Orc{\operatorname{Orc}}
\def\coz{\operatorname{coz}}
\def\C{\mathbb{C}}
\newtheorem{thm}{Theorem}[section]
\newtheorem{cor}[thm]{Corollary}
\newtheorem{prop}[thm]{Proposition}
\newtheorem{lemma}[thm]{Lemma}
\theoremstyle{definition}
\newtheorem{example}[thm]{Example}
\numberwithin{equation}{section}
\title{Spectral synthesis in the multiplier algebra of a $C_0(X)$-algebra}
\author
[Archbold]{Robert J. Archbold}
\address{Institute of Mathematics
\\University of Aberdeen
\\King's College
\\Aberdeen AB24 3UE
\\Scotland
\\United Kingdom
} \email{r.archbold@abdn.ac.uk}
\author[Somerset]{Douglas W.B. Somerset}
\address{Institute of Mathematics
\\University of Aberdeen
\\King's College
\\Aberdeen AB24 3UE
\\Scotland
\\United Kingdom}
 \email{somerset@quidinish.fsnet.co.uk}
\begin{document}

\begin{abstract}
Let $A$ be a $C_0(X)$-algebra with continuous map $\phi$ from ${\rm
Prim}(A)$, the primitive ideal space of $A$, to a locally compact
Hausdorff space $X$. Then the multiplier algebra $M(A)$ is a
$C(\beta X)$-algebra with continuous map $\overline\phi: {\rm
Prim}(M(A))\to\beta X$ extending $\phi$. For $x\in {\rm Im}(\phi)$,
let $J_x=\bigcap\{ P\in {\rm Prim}(A): \phi(P)=x\}$ and
$H_x=\bigcap\{ Q\in {\rm Prim}(M(A)):\overline\phi(Q)=x\}$. Then
$J_x\subseteq H_x\subseteq\tilde J_x$, the strict closure of $J_x$
in $M(A)$. Thus $H_x$ is strictly closed if and only if $H_x=\tilde
J_x$, and the `spectral synthesis' question asks when this happens.
In this paper it is shown that, for $\sigma$-unital $A$, $H_x$ is
strictly closed for all $x\in {\rm Im}(\phi)$ if and only if $J_x$
is locally modular for all $x\in {\rm Im}(\phi)$ and $\phi$ is a
closed map relative to its image. Various related results are
obtained.
\end{abstract}

\maketitle

\thanks{}

\noindent {\bf 2000 Mathematics Subject Classification}: 46L05,
46L57.




\bigskip

\section{Introduction}

 Let $A$ be a C$^*$-algebra with multiplier algebra $M(A)$ \cite{APT, Bus}.
The ideal structure of $M(A)$ is typically much more complicated
than that of $A$ and has been widely studied by a number of authors
{\cite{AE, Elli, Kuc, Li, Lin, Per, Ror}. In \cite{Xalg, multid} we
investigated certain aspects of the ideal structure of M(A) which
arise from a $C_0(X)$-algebra structure on the algebra $A$. The
notion of a $C_0(X)$-algebra was introduced by Kasparov \cite{Kasp}
following extensive earlier work on continuous and upper
semi-continuous fields (see, for example, \cite{DH, Fel, Glimm, Le,
Tom}). As noted in \cite[Section 1]{multid}, every $C^*$-algebra is
a $C_0(X)$-algebra, typically in many ways. $C_0(X)$-algebras have
been studied in \cite{AM, Blanch, Dup, DupGil, EWi, HRW, Nilsen}. In
\cite[Section 1]{Xalg}, we saw that if $A$ is a $C_0(X)$-algebra
then corresponding to each $x\in X$ there are two natural ideals
 $H_x$ and $\tilde{J_x}$ in $M(A)$ which one might hope would be
equal but in fact need not be so. The aim of this paper is to
characterize the equality $H_x=\tilde{J_x}$ in terms of $A$ and $X$
(without reference to $M(A)$).

 Recall
that $A$ is a \emph{$C_0(X)$-algebra} if there is a continuous map
$\phi$, called the \emph{base map}, from ${\rm Prim}(A)$, the
primitive ideal space of $A$ with the hull-kernel topology, to the
locally compact Hausdorff space $X$ \cite[Proposition C.5]{Will}. We
will use $X_{\phi}$ to denote the image of $\phi$ in $X$. If $A$ is
a $C_0(X)$-algebra then $M(A)$ is a $C(\beta X)$-algebra with
continuous map $\overline\phi: {\rm Prim}(M(A))\to \beta X$ (the
Stone-\v{C}ech compactification of $X$) extending $\phi$ (see
Section 2).

For $x\in X_{\phi}$, let $J_x$ be the closed ideal of $A$ defined by
$J_x=\bigcap\{ P\in {\rm Prim}(A): \phi(P)=x\}$ and let $H_x$ be the
closed ideal of $M(A)$ defined by $H_x=\bigcap\{ Q\in {\rm
Prim}(M(A)):\overline\phi(Q)=x\}$. Let $\tilde J_x$ be the strict
closure of $J_x$ in $M(A)$. Then $J_x\subseteq H_x\subseteq\tilde
J_x$ and hence $H_x$ is strictly closed if and only if $H_x=\tilde
J_x$ (see Proposition~\ref{Prop 1.3}). The `spectral synthesis'
question asks for conditions on $A$ and $X$ characterizing when
$H_x$ is strictly closed. It was shown in \cite{multid}, for
instance, that if $A$ is stable and $\sigma$-unital then for $x\in
X_{\phi}$, $H_x$ is strictly closed if and only if $x$ is a P-point
in $X_{\phi}$.

In this paper we return to the question in a more general context.
The main result (Corollary~\ref{Cor 2.7}) is that if $A$ is a
$\sigma$-unital $C_0(X)$-algebra with base map $\phi$ then $H_x$ is
strictly closed for all $x\in X_{\phi}$ if and only if $J_x$ is
locally modular for all $x\in X_{\phi}$ and $\phi$ is a closed map
relative to its image. Here $J_x$ is said to be {\sl locally
modular} if whenever $Q$ lies in the boundary in $\Prim(A)$ of
$H(x)=\{ P\in\Prim(A): P\supseteq J_x\}$
 then there exists a neighbourhood $V$ of $Q$ in $\Prim(A)\setminus U(x)$
(where $U(x)$ is the interior of $H(x)$) such that $A/\ker V$ is a
unital $C^*$-algebra.

If $A$ is separable then the same characterization is valid for
spectral synthesis at a point $x\in X_{\phi}$ (Corollary~\ref{Cor
2.10}), namely $H_x$ is strictly closed if and only if $J_x$ is
locally modular and $\phi$ is a locally closed map at $x$. For
general $\sigma$-unital $C_0(X)$-algebras this condition is close to
characterizing spectral synthesis at a point (see Theorem~\ref{Thm
2.6}) but does not quite succeed (Example~\ref{Ex 3.5}), and we have
to leave the problem open.

The structure of the paper is as follows. Section 2 gives basic
information on $C_0(X)$-algebras and spectral synthesis. Section 3
contains some of the main results of the paper, described above.
Section 4 looks more closely at pointwise spectral synthesis for a
$\sigma$-unital $C_0(X)$-algebra $A$ and identifies the points in
$X_{\phi}$ which are difficult to deal with. The main result is the
characterization of pointwise spectral synthesis in the case when
$A$ is a continuous $C_0(X)$-algebra (Theorem~\ref{Thm 3.6}).

In Sections 5 and 6, we restrict to the important special case when
$\phi$ is the complete regularization map for ${\rm Prim}(A)$ and
the connecting order $\Orc(A)$ is finite. In Section 5, we show
that, in this case, if $A$ is $\sigma$-unital then the local
modularity of $J_x$ implies that the complete regularization map
$\phi$ is locally closed at $x$. Hence if $A$ is also separable then
$H_x$ is strictly closed if and only if $J_x$ is locally modular
(Corollary~\ref{Cor 4.4}). In Section 6, we show that if $J_x$ is
locally modular then either $J_x$ does not contain the centre of $A$
or the hull $H(x)$ of $J_x$ in $\Prim(A)$ must have non-empty
interior---an unusual property unless $H(x)$ is a clopen set
(Corollary~\ref{Cor 5.3}).

\bigskip

\noindent Note added in revision.

After this paper was submitted for publication, we learned that
David McConnell (private communication) had independently obtained
Proposition~\ref{Prop 1.7}(iii), Proposition~\ref{Prop 1.7}(i) (in
the case where $\phi$ is the complete regularization map), and
versions of Corollary~\ref{Cor 3.3}, Corollary~\ref{Cor 3.4} and
Theorem~\ref{Thm 3.6} with somewhat stronger hypotheses.

 We are grateful to the referee for a number of helpful comments and
 for pointing out an error in the original proof of Theorem~\ref{Thm
 2.6}.



\section{General $C_0(X)$-algebras}


 In this section we gather some information about $C_0(X)$-algebras
and establish the basic facts about spectral synthesis
(Proposition~\ref{Prop 1.7}). For $C_0(X)$-algebras we follow the
terminology of \cite{Xalg}.

Let $A$ be a $C_0(X)$-algebra with base map $\phi: {\rm Prim}(A)\to
X$, and recall that $X_{\phi}={\rm Im}(\phi)$. Then $X_{\phi}$ is
completely regular; and if $A$ is $\sigma$-unital, $X_{\phi}$ is
$\sigma$-compact and hence normal \cite[Section 2]{multid}. For
$x\in X_{\phi}$, set $J_x=\bigcap \{ P\in {\rm Prim}(A):
\phi(P)=x\}$, and for $x\in X\setminus X_{\phi}$, set $J_x=A$. For
$a\in A$, the function $x\to \Vert a+J_x\Vert$ $(x\in X)$ is upper
semi-continuous \cite[Proposition C.10]{Will}. The $C_0(X)$-algebra
$A$ is said to be {\sl continuous} if, for all $a\in A$, the norm
function $x\to \Vert a+J_x\Vert$ $(x\in X$) is continuous. By Lee's
theorem \cite[Proposition C.10 and Theorem C.26]{Will}, this happens
if and only if the base map $\phi$ is open.

Let $J$ be a proper, closed, two-sided ideal of a $C^*$-algebra $A$.
The quotient map $q_J: A \to A/J$ has a canonical extension
$\tilde{q_J}:M(A)\to M(A/J)$. We define a proper, closed, two-sided
ideal $\tilde J$ of $M(A)$ by
$$\tilde J = \ker \tilde{q_J} = \{b\in M(A): ba,ab\in
J\hbox{ for all } a\in A\}.$$ The following proposition was proved
in \cite[Proposition 1.1]{Xalg}.

\medskip

\begin{prop} \label{Prop 1.1}  Let $J$ be a proper, closed,
two-sided ideal of a $C^*$-algebra $A$. Then

{\rm (i)} $\tilde J$ is the strict closure of $J$ in $M(A)$;

{\rm (ii)} $\tilde J \cap A = J$;

{\rm (iii)} if $P\in {\rm Prim}(A)$ then $\tilde P$ is primitive
(and hence is the unique ideal in ${\rm Prim}(M(A))$ whose
intersection with $A$ is $P$);

{\rm (iv)} $\tilde J = \bigcap\{\tilde P: P\in {\rm Prim}(A)\hbox{
and } P\supseteq J\}$ and for all $b\in M(A)$ $$ \Vert
b+\tilde{J}\Vert = \sup\{\Vert b+\tilde P\Vert: P\in {\rm
Prim}(A)\hbox{ and } P \supseteq J\};$$

{\rm (v)} $(A+\tilde J)/\tilde J$ is an essential ideal in
$M(A)/\tilde J$.
\end{prop}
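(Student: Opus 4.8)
The plan is to handle the five parts using three tools: a fixed approximate unit $(e_\lambda)$ for $A$, the unique extension to $M(A)$ of a nondegenerate representation, and the general $C^*$-algebra fact that a quotient by an intersection of closed ideals embeds isometrically into the $\ell^\infty$-product of the quotients. Throughout I would work directly from the defining description $\tilde J=\{b\in M(A):ba,ab\in J\text{ for all }a\in A\}$, noting first that $\tilde J$ is a closed two-sided ideal since it is $\ker\tilde{q_J}$.

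For (i) and (ii) I would argue as follows. To see that $\tilde J$ is strictly closed, take a net $b_i\in\tilde J$ converging strictly to $b$; then for each $a\in A$ one has $b_ia\to ba$ in norm with $b_ia\in J$, so $ba\in J$, and likewise $ab\in J$, whence $b\in\tilde J$. Conversely, for $b\in\tilde J$ the net $be_\lambda$ lies in $J$ (as $e_\lambda\in A$) and converges strictly to $b$, since $b(e_\lambda a-a)\to0$ and $(ab)e_\lambda\to ab$ in norm. Thus $\tilde J$ is exactly the strict closure of $J$, proving (i). Part (ii) is then immediate: if $a\in A$ satisfies $ab,ba\in J$ for all $b\in A$, then $ae_\lambda\in J$ and $ae_\lambda\to a$ in norm, so $a\in\overline J=J$.

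The substantive step is (iii). Writing $P=\ker\pi$ for an irreducible representation $\pi:A\to B(\mathcal H)$, I would use that $\pi$ is nondegenerate and hence extends uniquely to a representation $\tilde\pi$ of $M(A)$ on the same space; this extension is again irreducible because $\tilde\pi(M(A))\supseteq\pi(A)$ already acts irreducibly, so $\ker\tilde\pi\in\Prim(M(A))$. I would then identify $\ker\tilde\pi$ with $\tilde P$: for $a\in A$ one has $\tilde\pi(b)\pi(a)=\tilde\pi(ba)=\pi(ba)$, so $\tilde\pi(b)=0$ forces $ba\in P$ and, symmetrically, $ab\in P$, giving $\ker\tilde\pi\subseteq\tilde P$; the reverse inclusion uses that $\pi(A)\mathcal H$ is dense, so that $\pi(ba)=0$ for all $a$ forces $\tilde\pi(b)=0$. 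Hence $\tilde P=\ker\tilde\pi$ is primitive. For the uniqueness clause I would invoke that $A$ is an essential ideal of $M(A)$, so that $Q\mapsto Q\cap A$ is a bijection from $\{Q\in\Prim(M(A)):Q\not\supseteq A\}$ onto $\Prim(A)$; since $\tilde P\cap A=P\subsetneq A$ by (ii), the ideal $\tilde P$ is the unique such preimage of $P$.

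For (iv), the inclusion $\tilde J\subseteq\bigcap\{\tilde P:P\supseteq J\}$ is clear, and the reverse holds because $b\in\bigcap\tilde P$ gives $ba,ab\in\bigcap\{P\in\Prim(A):P\supseteq J\}=J$. The norm formula is then the general fact applied to $M(A)$ and the family $\{\tilde P:P\supseteq J\}$: the induced $*$-homomorphism $M(A)/\tilde J\to\prod_P M(A)/\tilde P$ is injective by the intersection formula, hence isometric, and the target carries the supremum norm. Finally, for (v), $A+\tilde J$ is an ideal of $M(A)$ and $(A+\tilde J)/\tilde J\cong A/(A\cap\tilde J)=A/J$ by (ii); identifying $M(A)/\tilde J$ with $\tilde{q_J}(M(A))\subseteq M(A/J)$, this copy of $A/J$ is precisely $q_J(A)$, which is essential in $M(A/J)$ and therefore in any intermediate subalgebra, yielding essentiality. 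The main obstacle is (iii): one must verify carefully that the multiplier extension of $\pi$ remains irreducible with kernel exactly $\tilde P$, and that the essential-ideal correspondence legitimately delivers the uniqueness assertion.
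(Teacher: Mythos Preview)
Your proof is correct and essentially complete; each of the five parts is handled by standard arguments that are carried out with appropriate care. Note, however, that the paper itself does not give a proof of this proposition: it simply cites \cite[Proposition 1.1]{Xalg}. So there is no ``paper's own proof'' to compare against here---you have supplied a self-contained argument where the authors chose to defer to an earlier reference.

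A few minor remarks. In (i), your verification that $be_\lambda\to b$ strictly is fine but slightly asymmetric as written; it is perhaps cleaner to note that $e_\lambda\to 1$ strictly in $M(A)$, so $be_\lambda\to b$ strictly for any $b\in M(A)$. In (iii), your identification $\ker\tilde\pi=\tilde P$ and the irreducibility of $\tilde\pi$ are exactly right; the uniqueness clause is indeed the standard essential-ideal correspondence (any $Q\in\Prim(M(A))$ with $Q\cap A=P$ satisfies $Q\not\supseteq A$, and then $Q\mapsto Q\cap A$ is a bijection onto $\Prim(A)$). In (v), your reduction to $A/J$ being essential in $M(A/J)$ is correct; the passage to the intermediate subalgebra $\tilde{q_J}(M(A))$ works because essentiality of an ideal $I$ in $B$ descends to any intermediate $C^*$-subalgebra $C$ with $I\subseteq C\subseteq B$ (if $c\in C$ annihilates $I$ then $c=0$ in $B$).
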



\noindent Furthermore, the map $P\mapsto\tilde P$ $(P\in {\rm
Prim}(A))$ maps ${\rm Prim}(A)$ homeomorphically onto a dense, open
subset of ${\rm Prim}(M(A))$ \cite[4.1.10]{GKP}. For $S\subseteq
{\rm Prim}(A)$, we write $\tilde S=\{ \tilde P: P\in S\}$. In view
of Proposition~\ref{Prop 1.1}(ii), $(A+\tilde{J})/\tilde{J}$ is
canonically isomorphic to $A/J$. If $A/J$ is unital then
$(A+\tilde{J})/\tilde{J}$ is a unital essential ideal of
$M(A)/\tilde{J}$ and therefore equal to $M(A)/\tilde{J}$.

Now suppose that $A$ is a $C_0(X)$-algebra, $x\in X$ and $a\in A$.
If $A/J_x$ is unital, the spectrum of $a+J_x$ (in $A/J_x$) coincides
with the spectrum of $a+\tilde{J}_x$ in $M(A)/\tilde{J}_x$ by the
previous remark. If $A/J_x$ is non-unital, the spectrum of $a+J_x$
(in the unitization of $A/J_x$) is equal to the spectrum of
$a+\tilde{J}_x$ in $(A+\tilde{J}_x)/\tilde{J}_x+\C(1+\tilde{J_x})$
and hence in $M(A)/\tilde{J}_x$ \cite[1.3.10(ii)]{Dix}.

The following proposition was proved in \cite[Proposition
1.2]{Xalg}.


\begin{prop} \label{Prop 1.2}  Let $A$ be a $C_0(X)$-algebra with
base map $\phi$. Then $\phi$ has a unique extension to a continuous
map $\overline{\phi}: {\rm Prim}(M(A))\to \beta X$ such that
$\overline{\phi}(\tilde P) = \phi(P)$ for all $P\in {\rm Prim}(A)$.
Hence $M(A)$ is a $C(\beta X)$-algebra with base map $\overline\phi$
and ${\rm Im}(\overline\phi)={\rm cl}_{\beta X}(X_{\phi})$.
\end{prop}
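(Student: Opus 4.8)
The plan is to realize the $C_0(X)$-algebra structure on $A$ as a nondegenerate $*$-homomorphism, to extend it to the multiplier level, and then to read off $\overline\phi$ from the Dauns--Hofmann theorem applied to $M(A)$. By the correspondence of \cite[Proposition C.5]{Will} (via the Dauns--Hofmann identification of $Z(M(A))$ with $C_b(\Prim(A))$), the base map $\phi$ is the same data as a nondegenerate $*$-homomorphism $\mu\colon C_0(X)\to Z(M(A))\subseteq M(A)$, characterized by the property that $\mu(f)$ acts on each quotient $A/P$ $(P\in\Prim(A))$ as multiplication by the scalar $f(\phi(P))$. Since $\mu$ is nondegenerate into $M(A)$, it extends uniquely to a unital $*$-homomorphism $\overline\mu\colon M(C_0(X))\to M(A)$; as $M(C_0(X))=C_b(X)=C(\beta X)$, and as the image of $\overline\mu$ lies in $Z(M(A))$ (the centre being strictly closed in $M(A)$ and $\overline\mu(g)$ being a strict limit of central elements $\mu(ge_\lambda)$), this exhibits $M(A)$ as a $C(\beta X)$-algebra.

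Next I would define $\overline\phi$ from this structure exactly as $\phi$ arises from $\mu$. For $Q\in\Prim(M(A))$ the quotient $M(A)/Q$ is primitive, so its centre is $\C$; hence the central subalgebra $\overline\mu\bigl(C(\beta X)\bigr)$ maps into the scalars of $M(A)/Q$, giving a character of $C(\beta X)$ and therefore a point $\overline\phi(Q)\in\beta X$, determined by $\overline\mu(g)+Q=g(\overline\phi(Q))\,(1+Q)$ for all $g\in C(\beta X)$. Continuity of $\overline\phi$ is then immediate from the Dauns--Hofmann theorem applied to $M(A)$: for fixed $g$, the function $Q\mapsto g(\overline\phi(Q))$ is precisely the continuous function on $\Prim(M(A))$ representing the central element $\overline\mu(g)$, and $\beta X$ carries the initial topology from $C(\beta X)$.

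The key step is the compatibility $\overline\phi(\tilde P)=\phi(P)$ for $P\in\Prim(A)$. Fix $f\in C_0(X)$ and $P\in\Prim(A)$. By Proposition~\ref{Prop 1.1}(ii),(iii),(v), $\tilde P$ is primitive and $(A+\tilde P)/\tilde P\cong A/P$ is a nonzero (essential) ideal of $M(A)/\tilde P$. The image of $\mu(f)$ in $M(A)/\tilde P$ is a scalar $c$, and its action on the ideal $(A+\tilde P)/\tilde P$ is intertwined, under this isomorphism, with the action of $\mu(f)$ on $A/P$, namely multiplication by $f(\phi(P))$; hence $c=f(\phi(P))$. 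Writing $f^{\beta}\in C(\beta X)$ for the canonical extension of $f$, this says $f^{\beta}(\overline\phi(\tilde P))=f(\phi(P))=f^{\beta}(\phi(P))$ for every $f\in C_0(X)$. Since every $f^{\beta}$ vanishes on $\beta X\setminus X$ while some $f$ has $f(\phi(P))\neq0$, the point $\overline\phi(\tilde P)$ must lie in $X$; and as $C_0(X)$ separates the points of the locally compact Hausdorff space $X$, we conclude $\overline\phi(\tilde P)=\phi(P)$.

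Finally, uniqueness and the computation of the image are formal. The map $P\mapsto\tilde P$ embeds $\Prim(A)$ as a dense subset $\widetilde{\Prim(A)}$ of $\Prim(M(A))$ on which $\overline\phi$ agrees with $\phi$; since $\beta X$ is Hausdorff, any two continuous extensions coincide, giving uniqueness. For the image, $\overline\phi(\widetilde{\Prim(A)})=X_{\phi}$, so continuity together with density forces $\operatorname{Im}(\overline\phi)\subseteq\operatorname{cl}_{\beta X}(X_{\phi})$; conversely $M(A)$ is unital, so $\Prim(M(A))$ is compact and $\operatorname{Im}(\overline\phi)$ is a closed subset of $\beta X$ containing $X_{\phi}$, whence it contains $\operatorname{cl}_{\beta X}(X_{\phi})$. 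I expect the third paragraph to be the main obstacle, since it requires matching the two Dauns--Hofmann identifications (for $A$ and for $M(A)$) through the non-Hausdorff spaces $\Prim(A)$ and $\Prim(M(A))$, and controlling the behaviour of $\overline\phi$ on the corona $\beta X\setminus X$.
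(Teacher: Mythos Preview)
Your argument is correct. The paper does not prove this proposition here; it simply records that it was established in \cite[Proposition 1.2]{Xalg}. So there is no ``paper's own proof'' to compare against in this text.

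That said, your route is exactly the one the surrounding exposition presupposes: the paper subsequently defines $\mu\colon C_0(X)\to Z(M(A))$ by $\mu(f)=\theta_A(f\circ\phi)$ via Dauns--Hofmann, which is the same $\mu$ you use, and the later computations (e.g.\ the use of $\theta_A(f\circ\phi)+\tilde P = f(\phi(P))(1+\tilde P)$) are precisely your compatibility check $\overline\phi(\tilde P)=\phi(P)$. Your proof of that compatibility can in fact be shortened: from $\mu(f)a+P=f(\phi(P))(a+P)$ for all $a\in A$ one reads off directly that $(\mu(f)-f(\phi(P))1)A\subseteq P$ and $A(\mu(f)-f(\phi(P))1)\subseteq P$, i.e.\ $\mu(f)-f(\phi(P))1\in\tilde P$, which gives $\overline\phi(\tilde P)=\phi(P)$ without passing through the essential-ideal argument. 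The remaining steps---strict closure of $Z(M(A))$, uniqueness by density of $\{\tilde P:P\in\Prim(A)\}$ in the Hausdorff target $\beta X$, and the image computation from compactness of $\Prim(M(A))$---are all sound. The closing sentence flagging the third paragraph as the ``main obstacle'' is editorial rather than mathematical and can be dropped.
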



\noindent Now let $A$ be a $C_0(X)$-algebra with base map $\phi$ and
let $\overline{\phi}: {\rm Prim}(M(A))\to \beta X$ be as in
Proposition~\ref{Prop 1.2}. For $x\in \beta X$, we define $H_x =
\bigcap \{ Q\in {\rm Prim}(M(A)): \overline\phi (Q)=x\}$, a closed
two-sided ideal of $M(A)$. Thus $H_x$ is defined in relation to
$(M(A),\beta X,\overline{\phi})$ in the same way that $J_x$ (for
$x\in X)$ is defined in relation to $(A,X,\phi)$. It follows that
for each $b\in M(A)$, the function $x\to\Vert b+H_x\Vert$ $(x\in
\beta X)$ is upper semi-continuous.


 The next proposition was proved in \cite[Proposition 1.3]{multid}.


\begin{prop} \label{Prop 1.3}  Let $A$ be a $C_0(X)$-algebra with
base map $\phi$, and set $X_{\phi}={\rm Im}(\phi)$.

{\rm (i)} For all $x\in X$, $J_x\subseteq H_x\subseteq \tilde{J_x}$
and $J_x = H_x\cap A$.

{\rm (ii)} For all $x\in X$, $H_x$ is strictly closed if and only if
$H_x=\tilde J_x$.

{\rm (iii)} For all $b\in M(A)$, $\Vert b\Vert= \sup\{\Vert
b+\tilde{J_x}\Vert:x\in X_{\phi}\}=\sup\{\Vert b+H_x\Vert:x\in
X_{\phi}\}$.
\end{prop}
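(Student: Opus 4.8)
The plan is to treat the three parts in turn, leaning on Proposition~\ref{Prop 1.1} and Proposition~\ref{Prop 1.2} throughout, after first recording one structural observation. Fix $x\in X_\phi$. Since $X$ is Hausdorff the singleton $\{x\}$ is closed, so by continuity of $\phi$ the fibre $\phi^{-1}(x)$ is a closed subset of $\Prim(A)$; as $J_x$ is by definition its kernel, the hull of $J_x$ is precisely $\phi^{-1}(x)$, i.e. $\{P\in\Prim(A):P\supseteq J_x\}=\{P\in\Prim(A):\phi(P)=x\}$. Feeding this into Proposition~\ref{Prop 1.1}(iv) gives the two identities I shall use repeatedly: $\tilde{J_x}=\bigcap\{\tilde P:\phi(P)=x\}$, and $\Vert b+\tilde{J_x}\Vert=\sup\{\Vert b+\tilde P\Vert:\phi(P)=x\}$ for all $b\in M(A)$.

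For (i), the inclusion $H_x\subseteq\tilde{J_x}$ comes for free: by Proposition~\ref{Prop 1.2} each $\tilde P$ with $\phi(P)=x$ satisfies $\overline\phi(\tilde P)=x$, so $\{\tilde P:\phi(P)=x\}\subseteq\{Q\in\Prim(M(A)):\overline\phi(Q)=x\}$, and intersecting over the larger family can only shrink the ideal, whence $H_x\subseteq\bigcap\{\tilde P:\phi(P)=x\}=\tilde{J_x}$. The opposite inclusion $J_x\subseteq H_x$ is where the construction of $\overline\phi$ must be used: the central homomorphism $C(\beta X)\to Z(M(A))$ underlying the $C(\beta X)$-algebra structure restricts on $C_0(X)$ to the homomorphism defining the $C_0(X)$-algebra $A$, so for any $Q$ with $\overline\phi(Q)=x$ the induced central character on $M(A)/Q$ is evaluation at $x$; hence every $f\in C_0(X)$ with $f(x)=0$ annihilates $A$ modulo $Q$, forcing $J_x=\overline{\{fa:a\in A,\ f(x)=0\}}\subseteq Q$, and intersecting over such $Q$ gives $J_x\subseteq H_x$. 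Finally $J_x=H_x\cap A$: the inclusion $J_x\subseteq H_x\cap A$ is immediate since $J_x\subseteq A$, while $H_x\cap A\subseteq\tilde{J_x}\cap A=J_x$ by the first inclusion and Proposition~\ref{Prop 1.1}(ii).

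Part (ii) is then formal. By Proposition~\ref{Prop 1.1}(i), $\tilde{J_x}$ is the strict closure of $J_x$, so if $H_x=\tilde{J_x}$ then $H_x$ is strictly closed. Conversely, if $H_x$ is strictly closed then, containing $J_x$, it must contain the strict closure $\tilde{J_x}$, and combined with $H_x\subseteq\tilde{J_x}$ from (i) this yields $H_x=\tilde{J_x}$.

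For (iii) I would first note that the ideals $\tilde P$ $(P\in\Prim(A))$ have zero intersection, since by Proposition~\ref{Prop 1.1}(iv) applied to $J=\{0\}$ we have $\bigcap_P\tilde P=\tilde{\{0\}}=\ker\tilde{q_{\{0\}}}=\{0\}$. Hence the diagonal map $M(A)\to\prod_P M(A)/\tilde P$ is an injective, and therefore isometric, $*$-homomorphism, giving $\Vert b\Vert=\sup_{P\in\Prim(A)}\Vert b+\tilde P\Vert$. Grouping the primitive ideals according to their $\phi$-fibre and invoking the second identity from the opening paragraph turns this into $\Vert b\Vert=\sup_{x\in X_\phi}\Vert b+\tilde{J_x}\Vert$. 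To close the loop, $H_x\subseteq\tilde{J_x}$ gives $\Vert b+\tilde{J_x}\Vert\le\Vert b+H_x\Vert\le\Vert b\Vert$ for each $x$, so the chain $\Vert b\Vert=\sup_x\Vert b+\tilde{J_x}\Vert\le\sup_x\Vert b+H_x\Vert\le\Vert b\Vert$ must consist of equalities throughout. The one genuinely non-formal step is $J_x\subseteq H_x$ in (i), which is exactly the point at which the passage from $\phi$ to its extension $\overline\phi$ (equivalently, the fibre-ideal description of $H_x$) has to be exploited; everything else is bookkeeping with the two quoted propositions.
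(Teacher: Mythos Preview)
The paper does not prove this proposition; it simply cites \cite[Proposition~1.3]{multid}. Your argument is correct and essentially self-contained. Two minor remarks.

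First, you fix $x\in X_\phi$ at the outset, whereas (i) and (ii) are stated for all $x\in X$. The case $x\in X\setminus X_\phi$ is degenerate ($J_x=A$, so $\tilde{J_x}=M(A)$; and any $Q\in\Prim(M(A))$ with $\overline\phi(Q)=x\notin X_\phi$ must contain $A$, since otherwise $Q=\tilde P$ for some $P\in\Prim(A)$ and then $\overline\phi(Q)=\phi(P)\in X_\phi$), but strictly speaking it deserves a sentence.

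Second, for $J_x\subseteq H_x$ you invoke the description $J_x=\overline{\{\mu(f)a:f\in C_0(X),\ f(x)=0,\ a\in A\}}$, which is standard for $C_0(X)$-algebras but is not recorded in this paper. A route that stays entirely within Propositions~\ref{Prop 1.1} and~\ref{Prop 1.2} is the following dichotomy: for $Q\in\Prim(M(A))$ with $\overline\phi(Q)=x$, either $Q\supseteq A\supseteq J_x$, or $A\not\subseteq Q$ and then $Q\cap A\in\Prim(A)$ (since $A$ is an ideal of $M(A)$), so by Proposition~\ref{Prop 1.1}(iii) $Q=\widetilde{Q\cap A}$, whence $\phi(Q\cap A)=\overline\phi(Q)=x$ by Proposition~\ref{Prop 1.2} and therefore $J_x\subseteq Q\cap A\subseteq Q$. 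Either way $J_x\subseteq Q$, and intersecting over such $Q$ gives $J_x\subseteq H_x$. This avoids importing the fibre-ideal description from outside.
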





 We now turn to the subject of spectral synthesis and our first
proposition seeks to justify the use of the name. Recall that in the
the theory of commutative Banach algebras, spectral synthesis holds
at a point $x$ in the maximal ideal space provided that each element
of the algebra whose Gelfand transform vanishes at $x$ can be
approximated in (the original) norm by elements whose transforms
vanish in a neighbourhood of $x$. At this stage we need some more
notation.

For a C$^*$-algebra $A$, let $Z(A)$ denote the centre of $A$.
Now let $A$ be a $C_0(X)$-algebra with base map $\phi$. For $b\in M(A)$,
let $Z(b)=\{ x\in X_{\phi}: b\in \tilde J_x\}$ and let
${\rm Int}\ Z(b)$ be the interior of $Z(b)$ relative to $X_{\phi}$.
 Recall that the Dauns-Hofmann isomorphism $\theta_A: C^b({\rm Prim}(A))\to Z(M(A))$
 has the property that $\theta_A(f)a+P=f(P)(a+P)$ for $f\in
C^b(\Prim(A))$, $a\in A$, and $P\in \Prim(A)$ (equivalently,
$\theta_A(f)+\tilde{P}=f(P)(1+\tilde{P})$).

\bigskip

\begin{prop} \label{Prop 1.5}  Let $A$ be a $C_0(X)$-algebra with
base map $\phi$ and let $x\in X_{\phi}$. Set $H^{\rm alg}_x=\{ b\in
M(A): x\in {\rm Int}\ Z(b)\}$. Then $H_x^{\rm alg}\subseteq H_x$ and
$H_x$ is the norm-closure of $H_x^{\rm alg}$.
\end{prop}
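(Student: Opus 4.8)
The plan is to prove the two assertions separately: the inclusion $H_x^{\rm alg}\subseteq H_x$ first, and the density statement second. For the inclusion, let $b\in H_x^{\rm alg}$, so there is a neighbourhood $V$ of $x$ in $X_\phi$ with $b\in\tilde J_y$ for every $y\in V$. I would fix $Q\in\Prim(M(A))$ with $\overline\phi(Q)=x$ and show $b\in Q$. Since $\{\tilde P:P\in\Prim(A)\}$ is dense in $\Prim(M(A))$, there is a net with $\tilde P_\alpha\to Q$; continuity of $\overline\phi$ gives $\phi(P_\alpha)=\overline\phi(\tilde P_\alpha)\to x$ in $\beta X$, and since $X$ is open in $\beta X$ and $V$ is the trace on $X_\phi$ of a $\beta X$-open set, $\phi(P_\alpha)\in V$ eventually. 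Then $b\in\tilde J_{\phi(P_\alpha)}\subseteq\tilde P_\alpha$ (using $P_\alpha\supseteq J_{\phi(P_\alpha)}$ and Proposition~\ref{Prop 1.1}(iv)), so $\|b+\tilde P_\alpha\|=0$ eventually. Lower semi-continuity of $Q'\mapsto\|b+Q'\|$ on $\Prim(M(A))$ then forces $\|b+Q\|\le\liminf_\alpha\|b+\tilde P_\alpha\|=0$, whence $b\in Q$; as $Q$ was arbitrary, $b\in H_x$.

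For the density statement, I would first note that $H_x^{\rm alg}$ is a (generally non-closed) two-sided ideal, since $Z(b)\subseteq Z(mb)\cap Z(bm)$ and $Z(b)\cap Z(b')\subseteq Z(b+b')$; thus $\overline{H_x^{\rm alg}}\subseteq H_x$ follows from the first part together with closedness of $H_x$. For the reverse inclusion I would fix $b\in H_x$ and $\epsilon>0$ and aim to produce a central cutoff $c=\theta_A(f)$ with $cb\in H_x^{\rm alg}$ and $\|b-cb\|\le\epsilon$. Since $b\in H_x\subseteq\tilde J_x$ and $y\mapsto\|b+H_y\|$ is upper semi-continuous on $\beta X$ with value $0$ at $x$, the set $W=\{y\in X:\|b+H_y\|<\epsilon\}$ is an open neighbourhood of $x$, and $H_y\subseteq\tilde J_y$ yields $\|b+\tilde J_y\|<\epsilon$ on $W\cap X_\phi$.

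The construction rests on the identity $H(y)=\phi^{-1}(y)$, valid because $\phi^{-1}(y)$ is closed in $\Prim(A)$ (continuity of $\phi$) and has kernel $J_y$, so its hull $H(y)$ is itself; consequently any $f=\tilde g\circ\phi$ takes the constant value $\tilde g(y)$ on the whole fibre $H(y)$. By complete regularity of $X$ I would choose $g\in C^b(X)$ with $0\le g\le1$, $g(x)=0$ and $g=1$ on $X\setminus W$, then flatten near $x$ by composing with a continuous $\psi:[0,1]\to[0,1]$ vanishing on $[0,\tfrac12]$ and fixing $1$, obtaining $\tilde g$ with $\tilde g=0$ on an open neighbourhood $V\ni x$ and $\tilde g=1$ on $X\setminus W$. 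Setting $f=\tilde g\circ\phi$ and $c=\theta_A(f)$, the Dauns-Hofmann relation gives $cb+\tilde P=f(P)(b+\tilde P)$, so vanishing of $f$ on $H(y)=\phi^{-1}(y)$ for $y\in V$ yields $cb\in\tilde J_y$ there and hence $cb\in H_x^{\rm alg}$. For the estimate, Propositions~\ref{Prop 1.1}(iv) and~\ref{Prop 1.3}(iii) give $\|b-cb\|=\sup_{P}|1-f(P)|\,\|b+\tilde P\|$; since $\tilde J_{\phi(P)}\subseteq\tilde P$ forces $\|b+\tilde P\|\le\|b+\tilde J_{\phi(P)}\|$, any $P$ with $\|b+\tilde P\|\ge\epsilon$ satisfies $\phi(P)\notin W$, whence $f(P)=1$ and that term vanishes, while all other terms are below $\epsilon$. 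Thus $\|b-cb\|\le\epsilon$, giving $b\in\overline{H_x^{\rm alg}}$.

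The main obstacle I anticipate is the density direction, specifically arranging a central multiplier genuinely supported away from $x$—so that $cb$ lands in $H_x^{\rm alg}$, which requires vanishing on an entire neighbourhood rather than merely at $x$—while perturbing $b$ by less than $\epsilon$. The two facts that make this succeed, and which I would isolate explicitly, are the identity $H(y)=\phi^{-1}(y)$, allowing functions pulled back along $\phi$ to control all the strict-closure quotients over a single fibre at once, and the transfer of the upper semi-continuity of $y\mapsto\|b+H_y\|$ into a uniform bound on $\|b+\tilde P\|$ via the chain $H_y\subseteq\tilde J_y\subseteq\tilde P$.
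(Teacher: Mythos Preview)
Your proof is correct. For the density direction it is essentially the paper's argument: both use upper semi-continuity of $y\mapsto\|b+H_y\|$ to find a neighbourhood on which $b$ is small, then manufacture a central cutoff via Dauns--Hofmann (with the same ``flattening'' composition to force vanishing on a whole neighbourhood of $x$), and estimate $\|b-cb\|$ fibrewise.

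For the inclusion $H_x^{\rm alg}\subseteq H_x$, however, you take a genuinely different route. The paper constructs a central element $z=\theta_A(f\circ\phi)\in Z(M(A))$ with $f(x)=0$ and $f\equiv 1$ off $\operatorname{Int}Z(b)$, shows $z\in H_x$ by the same net argument you use, and then observes $zb=b$ (since $b\in\tilde J_{\phi(P)}$ whenever $f(\phi(P))\neq 1$), concluding $b=zb\in H_x$. You instead bypass the central element entirely and argue directly: pick any $Q$ with $\overline\phi(Q)=x$, approximate it by $\tilde P_\alpha$, note $\phi(P_\alpha)$ is eventually in the neighbourhood so $b\in\tilde P_\alpha$, and invoke lower semi-continuity of the norm function on $\Prim(M(A))$ to get $b\in Q$. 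Your approach is more elementary and shorter; the paper's approach has the minor advantage of exhibiting an explicit factorisation $b=zb$ through $H_x$, which foreshadows the structure used in the density half and in Proposition~\ref{Prop 1.7B}.
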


\begin{proof} Let $b\in H^{\rm alg}_x$. Then $x$ lies in the
interior $U$ of $Z(b)$ in $X_{\phi}$. There exists $f\in
C^b(X_{\phi})$ such that $f(x)=0$ and $f(X_{\phi}\setminus U)=
\{1\}$. Let $z=\theta_A(f\circ\phi)\in Z(M(A))$. Suppose that
$Q\in\Prim(M(A))$ and $Q\supseteq H_x$. Let $(P_{\alpha})$ be a net
in $\Prim(A)$ such that $\tilde{P}_{\alpha}\to Q$. Since $z$ is
central,
$$\Vert z+Q\Vert = \lim\Vert z+\tilde{P}_{\alpha}\Vert
=\lim|f(\phi(P_{\alpha}))| = |f(\overline{\phi}(Q))| =|f(x)|=0.$$
Thus $z\in Q$ and hence $z\in H_x$.
For $P\in {\rm Prim}(A)$, $$zb+\tilde P=f(\phi(P))(b+\tilde
P)=b+\tilde P$$ because $b\in\tilde{J}_{\phi(P)}\subseteq\tilde{P}$
whenever $f(\phi(P))\neq1$.  Hence $zb=b$ and therefore $b\in H_x$.

For the second part of the proof, let $b\in H_x$ and $\epsilon>0$.
By upper semi-continuity, there is an open neighbourhood $U$ of $x$
in $X$ such that $\Vert b+H_y\Vert <\epsilon$ for all $y\in U$. Let
$V=U\cap X_{\phi}$. There exists a continuous function
$f:X_{\phi}\to[0,1]$ such that $f(x)=0$ and $f(X_{\phi}\setminus V)
=\{1\}$. Define $g:[0,1]\to[0,1]$ by $g(t)=0$ ($0\leq t\leq 1/2$)
and $g(t)=2t-1$ ($1/2<t\leq 1$). Let $h=g\circ f$. Then
$h(X_{\phi}\setminus V) =\{1\}$ and there exists an open
neighbourhood $W$ of $x$ in $X_{\phi}$ such that $h|_W=0$. Let
$z=\theta_A(h\circ\phi)\in Z(M(A))$.

Suppose that $y\in W$, $P\in\Prim(A)$ and $\phi(P)=y$. Then
$zb+\tilde{P} = h(y)(b+\tilde{P}) =0$. It follows that
$zb\in\tilde{J}_y$ and hence $zb\in H^{\rm alg}_x$.

Finally, for $P\in\Prim(A)$,
$$\Vert (b-bz)+\tilde{P}\Vert = (1-h(\phi(P)))\Vert b+\tilde{P}\Vert
\leq (1-h(\phi(P)))\Vert b+H_{\phi(P)}\Vert<\epsilon$$ since
$h(\phi(P))=1$ if $\phi(P)\notin U$. Hence $\Vert b-zb\Vert <
\epsilon$.
\end{proof}

It follows from Proposition~\ref{Prop 1.3} and Proposition~\ref{Prop
1.5} that, for $x\in X_{\phi}$, the ideal $H_x$ is strictly closed
if and only if every element $b\in M(A)$ which vanishes at $x$ can
be approximated in norm by elements vanishing in a neighbourhood of
$x$ in $X_{\phi}$. Thus we think of $H_x$ being strictly closed as
corresponding to `spectral synthesis holding at $x$'.

As in \cite{Xalg}, we define $\mu: C_0(X)\to Z(M(A))$ by $\mu (f)=
\theta_A (f\circ\phi)$ $(f\in C_0(X))$
 Now set $Z'(A)=\mu(C_0(X))\cap A$ and note that
$Z'(A)\subseteq Z(A)$. The next lemma was proved in \cite[Lemma
2.1]{Xalg}.

\begin{lemma} \label{Lemma 1.6}  Let $A$ be a $C_0(X)$-algebra with base
map $\phi$ and let $x\in X_{\phi}$. Then $J_x\supseteq Z'(A)$ if and
only if there exists $R\in {\rm Prim}(M(A))$ with $R\supseteq A$
such that $\overline\phi(R)=x$.
\end{lemma}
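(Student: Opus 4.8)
The plan is to recast the statement in the compact space $\Prim(M(A))$ (compact because $M(A)$ is unital) and the closed ``corona'' set $h(A)=\{Q\in\Prim(M(A)):Q\supseteq A\}$, which is exactly the complement of the dense open set $\tilde S=\{\tilde P:P\in\Prim(A)\}$. Since $R\in\Prim(M(A))$ with $R\supseteq A$ and $\overline\phi(R)=x$ exists precisely when $x\in\overline\phi(h(A))$, the lemma becomes the assertion that, for $x\in X_{\phi}$, one has $J_x\supseteq Z'(A)$ if and only if $x\in\overline\phi(h(A))$. As $h(A)$ is closed in the compact space $\Prim(M(A))$ and $\overline\phi$ is continuous, $\overline\phi(h(A))$ is a compact, hence closed, subset of $\beta X$; this is the crucial input. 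I will use throughout that for $f\in C_0(X)$ the central element $\mu(f)$ satisfies $\Vert\mu(f)+Q\Vert=|f(\overline\phi(Q))|$ for every $Q\in\Prim(M(A))$ (the net computation already performed in the proof of Proposition~\ref{Prop 1.5}, applied to the canonical extension of $f$ to $\beta X$), together with the elementary fact that a closed ideal is the intersection of the primitive ideals containing it, so that $A=\bigcap\{Q:Q\in h(A)\}$.

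The easy implication ($\Leftarrow$) is immediate from the norm formula: if $R\supseteq A$ and $\overline\phi(R)=x$, then each $\mu(f)\in Z'(A)$ lies in $A\subseteq R$, so $|f(x)|=\Vert\mu(f)+R\Vert=0$; hence $f(x)=0$, whence $\mu(f)+\tilde P=f(\phi(P))(1+\tilde P)=0$ for every $P$ with $\phi(P)=x$, giving $\mu(f)\in\tilde P\cap A=P$ by Proposition~\ref{Prop 1.1}(ii) and therefore $\mu(f)\in J_x$. Thus $Z'(A)\subseteq J_x$.

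For the substantive implication ($\Rightarrow$) I will prove the contrapositive: if $x\notin\overline\phi(h(A))$ then $Z'(A)\not\subseteq J_x$. Here $x\in X$, and since $X$ is open in $\beta X$ the set $\beta X\setminus X$ is compact; as $\overline\phi(h(A))$ is also compact, $F=\overline\phi(h(A))\cup(\beta X\setminus X)$ is a closed subset of $\beta X$ not containing $x$. By normality of $\beta X$ I choose $g\in C(\beta X)$ with $g(x)=1$ and $g|_F=0$. Because $g$ vanishes on $\beta X\setminus X$, it is the canonical extension $g=\beta f$ of some $f\in C_0(X)$ with $f(x)=1$. Since $g$ vanishes on $\overline\phi(h(A))$, the norm formula gives $\Vert\mu(f)+Q\Vert=|g(\overline\phi(Q))|=0$ for every $Q\in h(A)$, so $\mu(f)\in\bigcap\{Q:Q\in h(A)\}=A$; thus $\mu(f)\in Z'(A)$, while $f(x)=1\neq0$ prevents $\mu(f)\in J_x$, and $Z'(A)\not\subseteq J_x$.

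The main obstacle is precisely this last step, and it is made to work by two features: first, the compactness of $\Prim(M(A))$, which forces $\overline\phi(h(A))$ to be closed and hence separable from $x$; and second, the need to keep the separating function within $C_0(X)$ rather than merely $C(\beta X)$, which I arrange by folding $\beta X\setminus X$ into $F$ so that the chosen $g$ descends to $C_0(X)$ and $\mu(f)$ is genuinely a central element of $M(A)$. A more direct attack through the equality $A+H_x=M(A)$ would instead require extracting a central, $C_0(X)$-valued witness from an arbitrary element $1-a\in H_x$, and routing through $\overline\phi(h(A))$ with a central Urysohn function is what sidesteps that difficulty.
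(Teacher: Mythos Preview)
Your argument is correct. The two key ingredients --- the compactness of $\overline\phi(h(A))$ in $\beta X$ and the fact that a closed ideal equals the intersection of the primitive ideals above it (so $A=\bigcap\{Q:Q\in h(A)\}$) --- are used exactly as they should be, and the trick of folding $\beta X\setminus X$ into $F$ to force the separating function into $C_0(X)$ is the right move. One small point worth making explicit: your norm formula $\Vert\mu(f)+Q\Vert=|f(\overline\phi(Q))|$ should really be read as $|(\beta f)(\overline\phi(Q))|$, since $\overline\phi(Q)$ may lie outside $X$; you say this parenthetically, but it is precisely what makes both directions work.

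As for comparison: the paper does not prove Lemma~\ref{Lemma 1.6} here but simply cites \cite[Lemma~2.1]{Xalg}, so there is no in-paper argument to set yours against. Your proof is self-contained and would serve as a complete substitute for that citation.
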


\noindent We can now give the basic results on spectral synthesis
for general $C_0(X)$-algebras.


\begin{prop} \label{Prop 1.7}  Let $A$ be a $C_0(X)$-algebra with
base map $\phi$ and let $x\in X_{\phi}$.

{\rm (i)} If $J_x\not\supseteq Z'(A)$ then $H_x$ is strictly closed
in $M(A)$.

{\rm (ii)} If $x$ is an isolated point in $X_{\phi}$ then $H_x$ is
strictly closed in $M(A)$.

{\rm (iii)} If $J_x\supseteq Z'(A)$ and $A/J_x$ is unital then $H_x$
is not strictly closed in $M(A)$.
\end{prop}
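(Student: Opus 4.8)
The three parts rest on a single structural observation about $\Prim(M(A))$ that I would record first. Since $A$ is an ideal of $M(A)$, a primitive ideal $Q$ of $M(A)$ with $A\not\subseteq Q$ meets $A$ in a primitive ideal $P=Q\cap A$ of $A$, and by Proposition~\ref{Prop 1.1}(iii) the unique member of $\Prim(M(A))$ meeting $A$ in $P$ is $\tilde P$; hence $Q=\tilde P$ and, by Proposition~\ref{Prop 1.2}, $\overline\phi(Q)=\overline\phi(\tilde P)=\phi(P)$. (This is the homeomorphism of $\Prim(A)$ onto the dense open set $\{Q\in\Prim(M(A)):A\not\subseteq Q\}$ noted after Proposition~\ref{Prop 1.1}.) Thus the primitive ideals of $M(A)$ lying over $x$ split into those \emph{not} containing $A$, which are exactly the $\tilde P$ with $\phi(P)=x$ (equivalently $P\supseteq J_x$), and those containing $A$, whose existence is governed precisely by Lemma~\ref{Lemma 1.6}.

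Given this, part (ii) is the quickest, and I would argue directly from Proposition~\ref{Prop 1.5}. Since $x$ is isolated in $X_\phi$, the singleton $\{x\}$ is an open neighbourhood of $x$ in $X_\phi$, so $x\in Z(b)$ already forces $x\in\interior Z(b)$; hence $H^{\rm alg}_x=\{b\in M(A):x\in Z(b)\}=\tilde J_x$. As $\tilde J_x$ is norm-closed, Proposition~\ref{Prop 1.5} gives $H_x={\rm cl}(H^{\rm alg}_x)=\tilde J_x$, so $H_x$ is strictly closed by Proposition~\ref{Prop 1.3}(ii).

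For part (i), I would prove the inclusion $\tilde J_x\subseteq H_x$, the reverse being automatic by Proposition~\ref{Prop 1.3}(i). Let $Q\in\Prim(M(A))$ with $\overline\phi(Q)=x$. Since $J_x\not\supseteq Z'(A)$, Lemma~\ref{Lemma 1.6} rules out any primitive ideal over $x$ containing $A$, so $A\not\subseteq Q$; by the opening observation $Q=\tilde P$ with $\phi(P)=x$, whence $P\supseteq J_x$ and $\tilde J_x\subseteq\tilde P=Q$ by Proposition~\ref{Prop 1.1}(iv). Intersecting over all such $Q$ yields $\tilde J_x\subseteq H_x$, so $H_x=\tilde J_x$ is strictly closed. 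For part (iii), Lemma~\ref{Lemma 1.6} now supplies some $R\in\Prim(M(A))$ with $A\subseteq R$ and $\overline\phi(R)=x$, so that $H_x\subseteq R$. Were $H_x$ strictly closed, i.e.\ $H_x=\tilde J_x$, we would get $R\supseteq A+\tilde J_x$; but $A/J_x$ unital forces $A+\tilde J_x=M(A)$ by the remark following Proposition~\ref{Prop 1.1}, contradicting the properness of the primitive ideal $R$. Hence $H_x\neq\tilde J_x$, and $H_x$ is not strictly closed by Proposition~\ref{Prop 1.3}(ii).

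The routine inclusions and the isolated-point computation are straightforward; the delicate point, and the main obstacle, is pinning down the dichotomy cleanly—that a primitive ideal of $M(A)$ not containing $A$ must be some $\tilde P$ carrying the correct value of $\overline\phi$. Parts (i) and (iii) are then the two sides of the single equivalence in Lemma~\ref{Lemma 1.6}, and the argument for (iii) succeeds only because $A+\tilde J_x$ collapses to all of $M(A)$ exactly when $A/J_x$ is unital.
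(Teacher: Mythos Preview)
Your proof is correct. The overall logic matches the paper's, but two of the three parts are argued along genuinely different lines, so a brief comparison is worthwhile.

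For (i), the paper simply cites \cite[Proposition~2.2]{Xalg}; your argument via Lemma~\ref{Lemma 1.6} and the dichotomy $Q=\tilde P$ versus $Q\supseteq A$ is essentially the content of that citation, made self-contained. For (iii), the paper exhibits an explicit witness: taking $p\in A$ with $p+J_x=1_{A/J_x}$, one has $1-p\in\tilde J_x$ but $1-p\notin R$ for the $R\supseteq A$ supplied by Lemma~\ref{Lemma 1.6}. Your contradiction via $R\supseteq A+\tilde J_x=M(A)$ is the same observation packaged at the level of ideals rather than elements; neither gains anything over the other. The real difference is in (ii): the paper argues directly in $\Prim(M(A))$, using that $\phi^{-1}(x)$ is clopen and that $\{\tilde P:P\in\Prim(A)\}$ is dense, to show that any $R\not\supseteq\tilde J_x$ lies in the closure of $\{\tilde P:\phi(P)\neq x\}$ and hence has $\overline\phi(R)\neq x$. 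You instead invoke Proposition~\ref{Prop 1.5}: isolation of $x$ collapses $H^{\rm alg}_x$ to $\tilde J_x$, and taking norm closure finishes. Your route is shorter once Proposition~\ref{Prop 1.5} is available and highlights that (ii) is a trivial instance of the spectral-synthesis picture; the paper's route is independent of Proposition~\ref{Prop 1.5} and would survive in a presentation where that proposition came later.
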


\begin{proof} (i) This was proved in \cite[Proposition
2.2]{Xalg}.

(ii) Since $x$ is an isolated point in $X_{\phi}$, $W=\phi^{-1}(x)$
is a clopen subset of ${\rm Prim}(A)$. Set $Y={\rm Prim}(A)\setminus
W$. Suppose that $R\in {\rm Prim}(M(A))$ and $R\not\supseteq\tilde
J_x$. Then $R$ is not in the closure of the set $\{\tilde{P}: P\in
W\}$. Since the set $\{\tilde P: P\in {\rm Prim}(A)\}$ is dense in
${\rm Prim}(M(A))$, $R$ must lie in the closure of the set $\{
\tilde P: P\in Y\}$. Hence the continuity of $\overline\phi$ implies
that $\overline\phi(R)$ lies in the closure of $\{\phi(P): P\in
Y\}$. Thus $\overline\phi(R)\in X \setminus \{x\}$ and so
$R\not\supseteq H_x$.

(iii) Let $p\in A$ such that $p+J_x$ is the identity for $A/J_x$.
Then $ap-a,pa-a\in J_x$ for all $a\in A$ and hence $1-p\in\tilde
J_x$. On the other hand, by Lemma~\ref{Lemma 1.6} there exists $R\in
{\rm Prim}(M(A))$ with $R\supseteq A$ such that
$\overline\phi(R)=x$. Since $R\supseteq A$, $p\in R$, and hence
$1-p\notin R$. Thus $1-p\notin H_x$.
\end{proof}

 If $J_x \not\supseteq Z'(A)$ then $A/J_x$ is unital
\cite[Proposition 2.2]{Xalg}. So the three cases of
Proposition~\ref{Prop 1.7} cover all possibilities for $x$ except
when $x$ is a non-isolated point of $X_{\phi}$ with $A/J_x$
non-unital. This is the case of interest which will occupy us for
the rest of the paper. We will use the following notation. Let
$U_{\phi}=\{x\in X_{\phi}: J_x\not\supseteq Z'(A)\}$, an open subset
of $X_{\phi}$ (see \cite[Section 2]{Xalg}); and let
$W_{\phi}=X_{\phi}\setminus U_{\phi}$. Let $\partial U_{\phi}$
denote the boundary of $U_{\phi}$ in $X_{\phi}$.


 Finally in this section, we consider spectral synthesis for a
closed subset $E$ of $X_{\phi}$. Define $J_E=\bigcap\{ P\in {\rm
Prim}(A): \phi(P)\in E\}$, $H_E=\bigcap\{ Q\in {\rm Prim}(M(A)):
\overline\phi(P)\in E\}$ and $H^{\rm alg}_E=\{ b\in M(A): E\subseteq
{\rm Int}\ Z(b)\}$. In the following analogue of
Proposition~\ref{Prop 1.5}, we restrict to the case where $A$ is
$\sigma$-unital in order to ensure that $X_{\phi}$ is normal.

\begin{prop}\label{Prop 1.7B}  Let $A$ be a $\sigma$-unital
$C_0(X)$-algebra with base map $\phi$ and let $E$ be a closed subset
of  $X_{\phi}$. Then $H_E^{\rm alg}\subseteq H_E$ and $H_E$ is the
norm-closure of $H_x^{\rm alg}$.
\end{prop}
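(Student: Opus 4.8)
The plan is to mimic the proof of Proposition~\ref{Prop 1.5}, generalizing from a single point $x$ to a closed set $E$, using the fact that $\sigma$-unitality of $A$ makes $X_{\phi}$ normal (as recorded at the start of Section~2) so that Urysohn-type separation between the closed set $E$ and closed sets disjoint from it is available.

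\medskip

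For the containment $H_E^{\mathrm{alg}}\subseteq H_E$, I would take $b\in H_E^{\mathrm{alg}}$, so that $E\subseteq \operatorname{Int} Z(b)=:U$, and produce $f\in C^b(X_{\phi})$ with $f(E)=\{0\}$ and $f(X_{\phi}\setminus U)=\{1\}$; normality of $X_{\phi}$ is what lets me separate the closed set $E$ from the closed complement $X_{\phi}\setminus U$ by such an $f$ (in Proposition~\ref{Prop 1.5} the point $x$ required only complete regularity, but a closed set needs normality). Setting $z=\theta_A(f\circ\phi)\in Z(M(A))$, the same two computations as before go through verbatim: first, for any $Q\in\Prim(M(A))$ with $Q\supseteq H_E$ one has $\overline\phi(Q)\in \mathrm{cl}\,E=E$, whence $\|z+Q\|=|f(\overline\phi(Q))|=0$ and so $z\in H_E$; second, for $P\in\Prim(A)$ the identity $zb+\tilde P=f(\phi(P))(b+\tilde P)=b+\tilde P$ holds because $b\in\tilde J_{\phi(P)}\subseteq\tilde P$ whenever $f(\phi(P))\neq 1$, giving $zb=b$ and hence $b\in H_E$.

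\medskip

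For the second part, that $H_E$ is the norm-closure of $H_E^{\mathrm{alg}}$, I would follow the structure of the second half of the proof of Proposition~\ref{Prop 1.5}, but here the upper semi-continuity of a single function $y\mapsto\|b+H_y\|$ no longer immediately supplies a single neighbourhood, since $E$ is not a point. Given $b\in H_E$ and $\epsilon>0$, for each $y\in E$ upper semi-continuity yields an open neighbourhood on which $\|b+H_\cdot\|<\epsilon$; the set $U$ where $\|b+H_y\|<\epsilon$ is an open set containing $E$. Using normality I separate $E$ from $X_{\phi}\setminus U$, compose with the cutoff $g$ exactly as before to obtain $h$ vanishing on a neighbourhood $W$ of $E$ with $h(X_{\phi}\setminus U)=\{1\}$, and set $z=\theta_A(h\circ\phi)$. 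Then $zb$ vanishes at every $y\in W\supseteq E$, so $E\subseteq\operatorname{Int}Z(zb)$ and $zb\in H_E^{\mathrm{alg}}$, while the final estimate $\|(b-bz)+\tilde P\|=(1-h(\phi(P)))\|b+\tilde P\|\leq(1-h(\phi(P)))\|b+H_{\phi(P)}\|<\epsilon$ goes through unchanged, since $h(\phi(P))=1$ whenever $\phi(P)\notin U$.

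\medskip

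I expect the only genuine obstacle to be the replacement of the neighbourhood-of-a-point argument by a neighbourhood-of-a-closed-set argument: in the original proof complete regularity sufficed because one separates a point from a closed set, whereas separating the closed set $E$ from the closed set $X_{\phi}\setminus U$ requires normality, which is precisely why the hypothesis is strengthened to $\sigma$-unital $A$. Once that separation is in hand, both the containment and the density computations are essentially identical to those in Proposition~\ref{Prop 1.5}, with $x$ replaced throughout by $E$ and the observation $\overline\phi(Q)\in E$ (for $Q\supseteq H_E$) playing the role of $\overline\phi(Q)=x$. (I would also flag that the statement's trailing ``$H_x^{\mathrm{alg}}$'' appears to be a typo for $H_E^{\mathrm{alg}}$.)
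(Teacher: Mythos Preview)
Your proposal is correct and follows essentially the same route as the paper's own proof, which explicitly invokes ``As in the proof of Proposition~\ref{Prop 1.5}'' at both stages and uses normality of $X_{\phi}$ (from $\sigma$-unitality) in exactly the way you describe; the paper even skips the cutoff $g$ in the second half by using normality directly to find $W$ with $\overline{W}\subseteq V$, but this is a cosmetic difference. One small imprecision worth tightening: to show $z\in H_E$ it suffices to check $z\in R$ for each $R$ with $\overline{\phi}(R)\in E$ (the defining family for $H_E$), and for such $R$ one has $\overline{\phi}(R)\in E\subseteq X_{\phi}$ so the computation $\|z+R\|=|f(\overline{\phi}(R))|=0$ from Proposition~\ref{Prop 1.5} goes through verbatim; your stronger claim that every $Q\supseteq H_E$ satisfies $\overline{\phi}(Q)\in E$ need not hold, since $\overline{\phi}(Q)$ may lie in $\mathrm{cl}_{\beta X}(E)\setminus X_{\phi}$ where $f$ is not even defined.
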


\begin{proof} Let $b\in H^{\rm alg}_E$. Then $E$ is contained
in the interior $U$ of $Z(b)$ in $X_{\phi}$. Since $X_{\phi}$ is
normal, there exists $f\in C^b(X_{\phi})$ such that $f(E)=\{0\}$ and
$f(X_{\phi}\setminus U)= \{1\}$. Let $z=\theta_A(f\circ\phi)\in
Z(M(A))$. As in the proof of Proposition~\ref{Prop 1.5}, $z\in H_E$
and $b=zb\in H_E$.

For the second part of the proof, let $b\in H_E$ and $\epsilon>0$.
By upper semi-continuity, there is an open neighbourhood $U$ of $E$
in $X$ such that $\Vert b+H_y\Vert <\epsilon$ for all $y\in U$. Let
$V=U\cap X_{\phi}$. Since $X_{\phi}$ is normal, there exists an open
neighbourhood $W$ of $E$ in $X_{\phi}$, with closure contained in
$V$, and a continuous function $f:X_{\phi}\to[0,1]$ such that
$f(W)=\{0\}$ and $f(X_{\phi}\setminus V) =\{1\}$.  Let
$z=\theta_A(f\circ\phi)\in Z(M(A))$. As in the proof of
Proposition~\ref{Prop 1.5}, $zb\in H^{\rm alg}_x$ and
 $\Vert b-zb\Vert <
\epsilon$.
\end{proof}

\bigskip

Let $A$ be a $\sigma$-unital $C_0(X)$-algebra with base map $\phi$
and let $E$ be a closed subset of  $X_{\phi}$. If $P\in\Prim(A)$ and
$P\supseteq J_E$ then $\phi(P)\in E$ because $\phi$ is continuous
and $E$ is closed. Hence $$J_E =\bigcap_{x\in E} J_x  \subseteq
\bigcap_{x\in E} H_x =H_E\subseteq \bigcap_{x\in E} \tilde{J}_x
=\tilde{J}_E,$$ by Proposition~\ref{Prop 1.1}(iv). It follows from
Proposition~\ref{Prop 1.1}(i) and Proposition~\ref{Prop 1.7B} that
  $H_E^{\rm alg}$ is dense in $\bigcap_{x\in E} \tilde{J}_x$
  (which may be thought of as spectral synthesis for E) if and only if
  $H_E$ is strictly
closed in $M(A)$. We therefore define {\sl spectral synthesis for}
$E$ to mean that $H_E$ is strictly closed in $M(A)$. If $E$ and $F$
are closed subsets of $X_{\phi}$ then $H_{E\cup F}=H_E\cap H_F$. In
contrast to the theory of commutative Banach algebras, it follows
that if $E$ and $F$ have spectral synthesis then so does $E\cup F$.

\bigskip

We shall see in Proposition~\ref{Prop 1.9} that the question of
spectral synthesis for a closed subset $E$ of $X_{\phi}$ (i.e.
whether $H_E$ is strictly closed in $M(A)$) can be reduced to the
question of spectral synthesis for a singleton, but at the expense
of changing the base map $\phi$. We will need the following standard
topological lemma, where $X/E$ is the quotient space of a
topological space $X$ obtained by identifying all of the points in a
given subset $E$.

 \bigskip

\begin{lemma} \label{Lemma 1.8} Let $X$ be a normal, Hausdorff space and
let $E$ be a non-empty closed subset. Set $Y=X/E$. Then $Y$ is
normal and Hausdorff and hence completely regular.
\end{lemma}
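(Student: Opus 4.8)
The plan is to work with the quotient map $q:X\to Y$ and the single point $e=q(E)$ to which $E$ is collapsed. The key structural observation is that $q$ restricts to a homeomorphism of the open set $X\setminus E$ onto $Y\setminus\{e\}$, and that a subset $W\subseteq Y$ is open (respectively closed) precisely when $q^{-1}(W)$ is open (respectively closed) in $X$. In particular, the open sets of $Y$ are exactly the images $q(O)$ of \emph{saturated} open sets $O\subseteq X$, that is, open sets which either are disjoint from $E$ or contain $E$. Since $q$ is surjective, $q\bigl(q^{-1}(C)\bigr)=C$ for every $C\subseteq Y$; this lets me transfer separation problems in $Y$ to separation problems in $X$ and back, the only constraint being that the separating open sets produced in $X$ must be saturated.

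For the Hausdorff property I would separate points in two cases. Two distinct points of $Y\setminus\{e\}$ are separated using that $X\setminus E$ is an open Hausdorff subspace of $X$, so disjoint open subsets there are open in $X$, disjoint from $E$, and descend to disjoint open sets of $Y$. To separate a point $q(x)$ (with $x\notin E$) from $e$, note that $\{x\}$ is closed (as $X$ is Hausdorff, hence $T_1$) and disjoint from the closed set $E$; normality of $X$ yields disjoint open sets $U\ni x$ and $V\supseteq E$. Then $U\cap E=\emptyset$ while $V$ contains $E$, so both are saturated and $q(U),q(V)$ are the required disjoint neighbourhoods.

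For normality, let $C,D$ be disjoint closed subsets of $Y$, so that $C'=q^{-1}(C)$ and $D'=q^{-1}(D)$ are disjoint closed subsets of $X$. Because $E$ maps to the single point $e$, it cannot be split between $C'$ and $D'$: either $E\subseteq C'$, or $E\subseteq D'$, or $E$ meets neither. In the last case I apply normality of $X$ to the disjoint closed sets $C'$ and $D'\cup E$; in the first two cases I apply it directly to $C'$ and $D'$. In every case this produces disjoint open sets $G\supseteq C'$ and $H\supseteq D'$ in $X$ with the feature that one of them contains $E$ and the other is disjoint from $E$. Hence both are saturated, so $q(G)$ and $q(H)$ are open in $Y$; they contain $C$ and $D$ respectively, and they are disjoint, since a point of $q(G)\cap q(H)$ would force either a common point of $G\cap H$ lying off $E$, or $e$ to lie in the image of the set that is disjoint from $E$ --- both impossible.

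The step I expect to be the main obstacle is precisely the descent to $Y$: ordinary invocations of the normality and Hausdorffness of $X$ yield separating open sets that need not be saturated, so their images under $q$ need not be open and may collide at $e$. The device of absorbing $E$ into one of the two closed sets before invoking normality is what guarantees saturation, and this is the crux of the argument in both the Hausdorff and the normal parts. Finally, $Y$ is Hausdorff, hence $T_1$, and a normal $T_1$ space is completely regular by Urysohn's lemma, which gives the last assertion.
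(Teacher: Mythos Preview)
Your proof is correct and follows essentially the same strategy as the paper: pull the separation problem in $Y$ back along $q$, invoke normality of $X$, and arrange that the separating open sets in $X$ are saturated so their images under $q$ remain open and disjoint. The only differences are cosmetic. To force saturation you absorb $E$ into one of the two closed sets \emph{before} applying normality, whereas the paper applies normality first and then strips $E$ from the resulting open sets (replacing $U$ by $U\setminus E$, and $V$ by $V\setminus E$ if needed). The paper also economises slightly by verifying only $T_1$ directly and then deducing Hausdorffness from normal $+$ $T_1$, rather than running your separate two-case argument for Hausdorffness.
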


\begin{proof} Let $q:X\to Y$ be the quotient map. For $y\in Y$,
$q^{-1}(y)$ is closed and so $Y$ is a $T_1$ space. Let $B$ and $C$
be disjoint, non-empty closed subsets of $Y$. Then $G=q^{-1}(B)$ and
$H=q^{-1}(C)$ are disjoint closed sets in the normal space $X$, and
hence there exist disjoint open sets $U$ and $V$ such that
$G\subseteq U$ and $H\subseteq V$. Without loss of generality, we
may assume that $G\not\supseteq E$ and hence $G\cap E=\emptyset$.
Thus, replacing $U$ by $U\setminus E$, we may assume that $U$ does
not meet $E$. Hence $U$ is saturated with respect to the equivalence
relation corresponding to $q$ and so $q(U)$ is an open neighbourhood
of $B$ and is disjoint from $q(V)$. If $H\supseteq E$ then $V$ is
saturated and so $q(V)$ is open, and if $H\not\supseteq E$ then
$V\setminus E$ is a saturated open set containing $H$ and so
$q(V\setminus E)$ is an open neighbourhood of $C$. Hence $Y$ is
normal, and being $T_1$, it is also Hausdorff and completely
regular.
\end{proof}

\noindent Note that if $X$ is Hausdorff but non-normal then $X$ has a closed set
$E$ such that $X/E$ is not completely regular. To see this, let $E$ and $F$ be disjoint
closed subsets of $X$ which cannot be separated by disjoint open sets. Let $Y=X/E$
with quotient map $q$ and set $q(E)=e$. Then $e$ and the closed set $q(F)$ cannot
be separated by disjoint open sets, so $Y$ is not even regular.


 Now let $A$ be a $C_0(X)$-algebra with base map $\phi$. Let $E$ be
a non-empty closed subset of $X_{\phi}$.  Set $Y=X_{\phi}/E$ and let
$q: X_{\phi}\to Y$ be the quotient map. If $A$ is $\sigma$-unital
then $Y$ is completely regular by Lemma~\ref{Lemma 1.8} and so $A$
is a $C(\beta Y)$-algebra with base map $\psi=q\circ \phi$. Let
$e=q(E)$. The next proposition relates $J_E$ and $J_e$, and $H_E$
and $H_e$.

Note that if $f\in C^b(\Prim(A))$ then the element $\theta_A(f)\in
Z(M(A))$ induces a function $\overline{f}\in C(\Prim(M(A))$ such
that
$$ \theta_A(f)+Q = \overline{f}(Q)(1+Q) \qquad (Q\in\Prim(M(A))).$$
In particular, $\overline{f}(\tilde{P}) = f(P)$ for all
$P\in\Prim(A)$.


\begin{prop} \label{Prop 1.9}  Let $A$ be a $\sigma$-unital
$C_0(X)$-algebra with base map $\phi$ and let $E$ be a non-empty
closed subset of $X_{\phi}$. Set $Y=X_{\phi}/E$ and let $q:
X_{\phi}\to Y$ be the quotient map. Let $\psi=q\circ \phi$. Then $A$
is a $C(\beta Y)$-algebra with base map $\psi$, and $J_e=J_E$ and
$H_e=H_E$, where $\{e\}=q(E)$.
\end{prop}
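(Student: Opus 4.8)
The plan is to dispose of the two easy assertions first and then deduce the identity $H_e=H_E$ from the algebraic descriptions of $H_E$ and $H_e$, rather than from a direct comparison of the base maps $\overline\phi$ and $\overline\psi$. For the $C(\beta Y)$-algebra claim: since $A$ is $\sigma$-unital, $X_\phi$ is normal and Hausdorff, so Lemma~\ref{Lemma 1.8} shows that $Y=X_\phi/E$ is completely regular and Hausdorff; hence $\beta Y$ is compact Hausdorff and $Y$ embeds in it, and $\psi=q\circ\phi$ is continuous as the composite $\Prim(A)\xrightarrow{\phi}X_\phi\xrightarrow{q}Y\hookrightarrow\beta Y$. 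Thus $A$ is a ($\sigma$-unital) $C(\beta Y)$-algebra with base map $\psi$ and $\operatorname{Im}(\psi)=Y$, so the results of Section~2 apply to this new structure. For $J_e=J_E$: because $q^{-1}(e)=E$, we have $\psi(P)=e$ if and only if $\phi(P)\in E$, so the fibres $\{P:\psi(P)=e\}$ and $\{P:\phi(P)\in E\}$ coincide, and intersecting gives $J_e=J_E$. The same remark identifies the remaining fibre ideals of the new structure, namely $J_{q(x)}=J_x$ for $x\in X_\phi\setminus E$ (as $q$ is injective there); by Proposition~\ref{Prop 1.1}(i) the strict closures therefore satisfy $\tilde J_e=\tilde J_E=\bigcap_{x\in E}\tilde J_x$ and $\tilde J_{q(x)}=\tilde J_x$.

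For $H_e=H_E$ I would show that the two algebraic ideals coincide as subsets of $M(A)$, that is $H_E^{\rm alg}=H_e^{\rm alg}$, and then apply Proposition~\ref{Prop 1.7B} (to the closed set $E$) and Proposition~\ref{Prop 1.5} (to the singleton $\{e\}$ in the new $C(\beta Y)$-structure) to conclude $H_E=\overline{H_E^{\rm alg}}=\overline{H_e^{\rm alg}}=H_e$. Writing $Z(b)=\{x\in X_\phi:b\in\tilde J_x\}$ for the original structure and $Z'(b)=\{y\in Y:b\in\tilde J_y\}$ for the new one, the fibre-ideal identifications above give $e\in Z'(b)\iff E\subseteq Z(b)$, and $q(x)\in Z'(b)\iff x\in Z(b)$ for $x\in X_\phi\setminus E$. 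The topological fact that drives the argument is that \emph{any} open subset $U$ of $X_\phi$ containing $E$ is saturated for $q$ (the only non-singleton fibre being $E$ itself), so that $q(U)$ is open in $Y$; conversely, for open $V\subseteq Y$ with $e\in V$, the set $q^{-1}(V)$ is open and contains $E$. Tracing these through the definitions, one checks that $E\subseteq\operatorname{Int}_{X_\phi}Z(b)$ holds if and only if $e\in\operatorname{Int}_Y Z'(b)$: given the former, the open set $U=\operatorname{Int}_{X_\phi}Z(b)$ is saturated and $q(U)$ is an open neighbourhood of $e$ contained in $Z'(b)$; given the latter, $q^{-1}(\operatorname{Int}_Y Z'(b))$ is an open neighbourhood of $E$ contained in $Z(b)$. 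This yields $H_E^{\rm alg}=H_e^{\rm alg}$ and hence $H_E=H_e$.

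The step I expect to require the most care is this final equivalence of interiors, and in particular the verification that the witnessing open set transfers correctly between $X_\phi$ and $Y$ in both directions; the saturation property of the quotient map---which holds precisely because $X_\phi/E$ collapses the single closed set $E$ and leaves all other points separated---is exactly what makes the push-forward and pull-back of neighbourhoods land inside $Z'(b)$ and $Z(b)$ respectively. The remaining bookkeeping (continuity of $q$, and the fact that $\{e\}$ is closed in the Hausdorff space $Y$ so that Proposition~\ref{Prop 1.5} applies to the new structure) is routine.
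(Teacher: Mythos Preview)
Your proof is correct but takes a genuinely different route from the paper. For the non-trivial inclusion $H_E\subseteq H_e$, the paper works directly in $\Prim(M(A))$: given $Q\not\supseteq H_E$ it picks a compact neighbourhood $D$ of $Q$ avoiding the hull of $H_E$, observes that $L:=\overline\phi(D)\cap X_\phi$ is a closed set disjoint from $E$, uses the normality of $X_\phi$ to produce $f\in C(\beta X_\phi)$ separating $\overline\phi(Q)$ from the closure of $E$, and then factors $f|_{X_\phi}$ through $q$ to conclude $\overline\psi(Q)=\overline q(\overline\phi(Q))\neq e$. You instead avoid $\Prim(M(A))$ and the Stone--\v{C}ech extensions $\overline q$, $\overline\phi$ altogether by invoking the algebraic descriptions already established in Propositions~\ref{Prop 1.5} and~\ref{Prop 1.7B}: the quotient-map fact that an open set containing $E$ is $q$-saturated gives $H_E^{\rm alg}=H_e^{\rm alg}$ by the interior argument you describe, and taking norm closures yields $H_E=H_e$. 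Your approach is shorter and cleanly recycles the normality argument that is already packaged inside Proposition~\ref{Prop 1.7B}; the paper's approach is more self-contained from the definitions and, incidentally, yields the useful auxiliary identity $\overline\psi=\overline q\circ\overline\phi$ on $\Prim(M(A))$, which your argument does not need or produce.
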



\begin{proof} For $P\in {\rm Prim}(A)$, $\phi(P)\in E$ if and only if
$\psi(P)=e$ and therefore $J_E=J_e$.

For $P\in \Prim(A)$, $\overline{q\circ \phi}(\tilde{P}) =
(q\circ\phi)(P) = \overline{q}(\overline{\phi}(\tilde{P}))$ and
hence $\overline{q\circ\phi} = \overline{q}\circ\overline{\phi}$ by
continuity. Thus if $Q \in \Prim(M(A))$ and $\overline{\phi}(Q)\in
E$ then $\overline \psi(Q)=e$. Hence $H_E \supseteq H_e$. For the
reverse inclusion, suppose that $Q\in{\rm Prim}(M(A))$ with $Q\notin
W:=\{ R\in {\rm Prim}(M(A)): R\supseteq H_E\}$. Let $D$ be a compact
neighbourhood of $Q$ in ${\rm Prim}(M(A))$ with $D$ disjoint from
$W$. Then $F:=\overline\phi(D)$ is a compact subset of $\beta
X_{\phi}$ and hence $L:=F\cap X_{\phi}$ is closed in $X_{\phi}$ and
disjoint from $E$, and $\overline\phi(Q)$ lies in the closure of $L$
in $\beta X_{\phi}$. Since $X_{\phi}$ is normal, $\overline\phi(Q)$
does not lie in the closure of $E$ in $\beta X_{\phi}$ and hence
there is a continuous, real-valued function $f$ on $\beta X_{\phi}$
such that $f(\overline\phi(Q))=1$ and $f(E)=\{0\}$. There is a
well-defined function $g: Y\to {\bf R}$ such that $g\circ
q=f|_{X_{\phi}}$. Let $U$ be an open subset of ${\bf R}$. Then
$q^{-1}(g^{-1}(U)) = f^{-1}(U) \cap X_{\phi}$, an open subset of
$X_{\phi}$. Thus $g^{-1}(U)$ is open in $Y$ and so $g$ is
continuous. Let $\overline g$ be the extension of $g$ to a
continuous function on $\beta Y$. Then $\overline g\circ \overline
q=f$ by continuity and so $\overline g\circ \overline q\circ
\overline\phi (Q)=1$. Hence $\overline{\psi}(Q) =\overline q\circ
\overline \phi(Q) \neq e$. Thus $Q\not \supseteq H_e$, and hence
$H_E=H_e$.
\end{proof}


\section{Global and local spectral synthesis}


 In this section we characterize `global spectral synthesis' by
showing that, for a $\sigma$-unital $C_0(X)$-algebra $A$, $H_x$ is
strictly closed for all $x\in X_{\phi}$ if and only if $J_x$ is
locally modular for all $x\in X_{\phi}$ and $\phi$ is a closed map
relative to its image (Corollary~\ref{Cor 2.7}). For separable $A$
we can also characterize `spectral synthesis at a point': if $A$ is
separable then for $x\in X_{\phi}$, $H_x$ is strictly closed if and
only if $J_x$ is locally modular and $\phi$ is locally closed at $x$
(Corollary~\ref{Cor 2.10}).

We begin by analyzing the property of $H_x$ being strictly closed
into two separate sub-properties (Proposition~\ref{Prop 2.2}). For
this, we need the following lemma.


\begin{lemma} \label{Lemma 2.1} Let $A$ be a $\sigma$-unital
C$^*$-algebra and let $Y$ and $Z$ be disjoint closed subsets of
${\rm Prim}(A)$. Then the closures of $\tilde Y$ and $\tilde Z$ are
disjoint in ${\rm Prim}(M(A))$.
\end{lemma}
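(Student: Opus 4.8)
The plan is to translate everything into ideal language and reduce the disjointness of the two closures to the single relation $1\in\tilde I+\tilde K$ for appropriate ideals $I,K$ of $A$.

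First I would set $I=\bigcap\{P\in\Prim(A):P\in Y\}$ and $K=\bigcap\{P\in\Prim(A):P\in Z\}$, the kernels of $Y$ and $Z$. Since $Y$ and $Z$ are closed, $Y=\{P\in\Prim(A):P\supseteq I\}$ and $Z=\{P\in\Prim(A):P\supseteq K\}$. By Proposition~\ref{Prop 1.1}(iv), $\bigcap\{\tilde P:P\in Y\}=\tilde I$ and $\bigcap\{\tilde P:P\in Z\}=\tilde K$, so the kernels in $M(A)$ of $\tilde Y$ and $\tilde Z$ are exactly $\tilde I$ and $\tilde K$. As the closure of a set in a primitive ideal space is the hull of its kernel, $\overline{\tilde Y}=\{Q\in\Prim(M(A)):Q\supseteq\tilde I\}$ and $\overline{\tilde Z}=\{Q\in\Prim(M(A)):Q\supseteq\tilde K\}$. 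Thus the two closures are disjoint precisely when no $Q\in\Prim(M(A))$ contains both $\tilde I$ and $\tilde K$, and for this it suffices to produce $b\in\tilde K$ with $1-b\in\tilde I$.

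Next I would exploit the disjointness of $Y$ and $Z$. Since $\{P:P\supseteq I\}\cap\{P:P\supseteq K\}=\{P:P\supseteq I+K\}=Y\cap Z=\emptyset$, and the sum $I+K$ of two closed ideals of a $C^*$-algebra is again a closed ideal, while a proper closed ideal always has nonempty hull, it follows that $I+K=A$. Writing $N=I\cap K$, the map $a+N\mapsto(a+I,a+K)$ is then an isomorphism $A/N\cong A/I\oplus A/K$ (surjectivity using $I+K=A$), whence $M(A/N)\cong M(A/I)\oplus M(A/K)$; under this identification the canonical extension $\tilde q_N:M(A)\to M(A/N)$ has its two coordinate maps equal to $\tilde q_I$ and $\tilde q_K$, with kernels $\tilde I$ and $\tilde K$ respectively.

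The crux is to lift the central projection $(1,0)\in M(A/I)\oplus M(A/K)=M(A/N)$ back to $M(A)$, and this is where $\sigma$-unitality is essential: by the noncommutative Tietze extension theorem the map $\tilde q_N$ is surjective, so there is $b\in M(A)$ with $\tilde q_N(b)=(1,0)$. Composing with the coordinate projections gives $\tilde q_K(b)=0$ and $\tilde q_I(1-b)=0$, that is $b\in\tilde K$ and $1-b\in\tilde I$. Finally, if some $Q\in\Prim(M(A))$ contained both $\tilde I$ and $\tilde K$, then $1-b\in Q$ and $b\in Q$ would force $1\in Q$, contradicting properness of $Q$; hence $\overline{\tilde Y}\cap\overline{\tilde Z}=\emptyset$. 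The main obstacle is exactly the surjectivity of $\tilde q_N$: without $\sigma$-unitality an approximate unit of $K$ need not converge strictly in $M(A)$, so the lift of $(1,0)$ may fail to exist, and it is precisely here that the hypothesis does its work.
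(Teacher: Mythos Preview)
Your proof is correct and follows essentially the same strategy as the paper's: both reduce to the quotient $A/N$ with $N=\ker(Y\cup Z)=I\cap K$, construct the ``characteristic'' element of $Y$ in $M(A/N)$, and lift it to $M(A)$ via Pedersen's noncommutative Tietze extension theorem for $\sigma$-unital $C^*$-algebras. The only cosmetic difference is that the paper obtains the element in $M(A/N)$ via the Dauns--Hofmann theorem (as a central element coming from the clopen decomposition of $\Prim(A/N)$), whereas you obtain it as the projection $(1,0)$ in the Chinese Remainder decomposition $M(A/N)\cong M(A/I)\oplus M(A/K)$; these are the same element viewed two ways.
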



\begin{proof} Set $J=\ker (Y\cup Z)$ and let $B=A/J$. Let $\pi: A\to
B$ be the quotient map. Then the map $P\mapsto\pi(P)$ $(P\in Y\cup
Z)$ carries $Y\cup Z$ homeomorphically onto ${\rm Prim}(B)$. Thus
there exists $b\in Z(M(B))$ such that
$b+\pi(P)^{\sim}=1+\pi(P)^{\sim}$ for $P\in Y$ and
$b+\pi(P)^{\sim}=0$ for $P\in Z$. By \cite[Theorem 10]{SAW} the
canonical map $M(A)\to M(B)$ is surjective, and hence there exists
$a\in M(A)$ such that $a+\tilde P=1+\tilde P$ for $P\in Y$ and
$a+\tilde P=0$ for $P\in Z$. Thus, by definition of the hull-kernel
topology, $a+Q=0$ for all $Q$ in the closure of $\tilde Z$ in ${\rm
Prim}(M(A))$, and by considering $(1-a)$ we see that likewise
$a+Q=1+Q$ for all $Q$ in the closure of $\tilde Y$ in ${\rm
Prim}(M(A))$. Hence $\tilde Y$ and $\tilde Z$ have disjoint closures
in ${\rm Prim}(M(A))$.
\end{proof}


 The first of the two sub-properties into which we analyze the
property of strict closure is as follows. Recall that
$H(x)=\phi^{-1}(x)$ $(x\in X_{\phi})$. For $x\in X_{\phi}$ we say
that the base map $\phi$ is {\sl locally closed at $x$}
\cite[$\S$13.XIV]{Kur} if whenever $Y$ is a closed subset of ${\rm
Prim}(A)$ such that $x$ lies in the closure of $\phi(Y)$ then
$x\in\phi(Y)$, that is, $Y\cap H(x)$ is non-empty. For example, if
$x$ is an isolated point in $X_{\phi}$ (which implies that $H(x)$ is
a clopen subset of ${\rm Prim}(A)$) then $\phi$ is trivially locally
closed at $x$. Note, however, that $H(x)$ could be clopen, yet $x$
non-isolated in $X_{\phi}$. In this case, $\phi$ would not be
locally closed at $x$, see Example~\ref{Ex 3.8}(ii). We say that
$\phi$ is {\sl relatively closed} if $\phi(Y)$ is closed in
$X_{\phi}$ for all closed subsets $Y$ of ${\rm Prim}(A)$. Clearly
$\phi$ is relatively closed if and only if $\phi$ is locally closed
at each $x\in X_{\phi}$.


\begin{prop} \label{Prop 2.2}  Let $A$ be a $C_0(X)$-algebra with
base map $\phi$ and let $x\in X_{\phi}$. Consider the following
properties:

{\rm (i)} $H_x$ is strictly closed;

{\rm (ii)} $\phi$ is locally closed at $x$;

{\rm (iii)} for each $b\in \tilde J_x$ and $\epsilon>0$ there is an
open set $V\subseteq {\rm Prim}(A)$ with $H(x)\subseteq V$ such that
$\Vert b+\tilde P\Vert<\epsilon$ for all $P\in V$.

\noindent Then {\rm (i)}$\Rightarrow${\rm (iii)} and {\rm
(ii)}$+${\rm (iii)}$\Rightarrow${\rm (i)}. If $A$ is $\sigma$-unital
then {\rm (i)}$\Rightarrow${\rm (ii)} and hence {\rm (i)} is
equivalent to {\rm (ii)}$+${\rm (iii)}.
\end{prop}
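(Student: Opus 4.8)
The plan is to establish the three asserted implications separately, using throughout three recurring facts: that $H(x)=\phi^{-1}(x)=\{P\in\Prim(A):P\supseteq J_x\}$ is a closed subset of $\Prim(A)$; that $\tilde J_x=\bigcap\{\tilde P:P\supseteq J_x\}$ with $\Vert b+\tilde J_x\Vert=\sup\{\Vert b+\tilde P\Vert:P\supseteq J_x\}$ (Proposition~\ref{Prop 1.1}(iv)); and that $H_{\phi(P)}\subseteq\tilde J_{\phi(P)}\subseteq\tilde P$ for every $P\in\Prim(A)$ (Proposition~\ref{Prop 1.3}(i) combined with Proposition~\ref{Prop 1.1}(iv)). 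For \textrm{(i)}$\Rightarrow$\textrm{(iii)} I would assume $H_x=\tilde J_x$ (Proposition~\ref{Prop 1.3}(ii)) and fix $b\in\tilde J_x=H_x$ and $\epsilon>0$. Since $\Vert b+H_x\Vert=0$ and $y\mapsto\Vert b+H_y\Vert$ is upper semicontinuous on $\beta X$, there is an open neighbourhood $U$ of $x$ with $\Vert b+H_y\Vert<\epsilon$ for all $y\in U$. Setting $V=\phi^{-1}(U)$, an open set containing $H(x)$, each $P\in V$ has $\phi(P)\in U$ and $H_{\phi(P)}\subseteq\tilde P$, so the quotient-norm estimate gives $\Vert b+\tilde P\Vert\le\Vert b+H_{\phi(P)}\Vert<\epsilon$, which is exactly \textrm{(iii)}.

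For \textrm{(ii)}$+$\textrm{(iii)}$\Rightarrow$\textrm{(i)} it suffices to prove $\tilde J_x\subseteq H_x$, the reverse inclusion being automatic (Proposition~\ref{Prop 1.3}(i)). Given $b\in\tilde J_x$ and $\epsilon>0$, I would use \textrm{(iii)} to obtain an open $V\supseteq H(x)$ with $\Vert b+\tilde P\Vert<\epsilon$ for $P\in V$, and put $Y=\Prim(A)\setminus V$, a closed set disjoint from $H(x)$. Since $\phi$ is locally closed at $x$, the contrapositive of the definition yields $x\notin\overline{\phi(Y)}$ in $X_\phi$, so there is a neighbourhood $N$ of $x$ in $X_\phi$ with $N\cap\phi(Y)=\emptyset$. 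Now fix any $Q\in\Prim(M(A))$ with $\overline\phi(Q)=x$ and choose a net $\tilde P_\alpha\to Q$, which is possible since $\{\tilde P:P\in\Prim(A)\}$ is dense in $\Prim(M(A))$. Continuity of $\overline\phi$ gives $\phi(P_\alpha)\to x$, so eventually $\phi(P_\alpha)\in N$, forcing $P_\alpha\notin Y$, i.e.\ $P_\alpha\in V$ and $\Vert b+\tilde P_\alpha\Vert<\epsilon$. Lower semicontinuity of $R\mapsto\Vert b+R\Vert$ on $\Prim(M(A))$ then gives $\Vert b+Q\Vert\le\liminf\Vert b+\tilde P_\alpha\Vert\le\epsilon$. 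As $Q$ was arbitrary and $\Vert b+H_x\Vert=\sup\{\Vert b+Q\Vert:\overline\phi(Q)=x\}$, we conclude $\Vert b+H_x\Vert\le\epsilon$ for every $\epsilon$, hence $b\in H_x$. Note that this direction uses neither $\sigma$-unitality nor Lemma~\ref{Lemma 2.1}.

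For \textrm{(i)}$\Rightarrow$\textrm{(ii)} under $\sigma$-unitality I would argue by contradiction, and I expect this to be the main obstacle, since it is the only place where $\sigma$-unitality is essential, entering precisely through Lemma~\ref{Lemma 2.1}. Suppose $H_x=\tilde J_x$ but $\phi$ is not locally closed at $x$; then there is a closed set $Y\subseteq\Prim(A)$ with $Y\cap H(x)=\emptyset$ and $x\in\overline{\phi(Y)}$, so some net $P_\alpha\in Y$ satisfies $\phi(P_\alpha)\to x$. By compactness of $\Prim(M(A))$ the net $\tilde P_\alpha\in\tilde Y$ has a subnet converging to some $Q$, which therefore lies in the closure of $\tilde Y$ and satisfies $\overline\phi(Q)=x$ by continuity of $\overline\phi$. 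Hence $Q\supseteq H_x=\tilde J_x=\ker\widetilde{H(x)}$, so $Q$ lies in the closure of $\widetilde{H(x)}$ as well. Thus $Q$ belongs to the closures of both $\tilde Y$ and $\widetilde{H(x)}$; but $Y$ and $H(x)$ are disjoint closed subsets of $\Prim(A)$, so Lemma~\ref{Lemma 2.1} forces these closures to be disjoint, a contradiction. Finally, combining \textrm{(i)}$\Rightarrow$\textrm{(ii)}, \textrm{(i)}$\Rightarrow$\textrm{(iii)} and \textrm{(ii)}$+$\textrm{(iii)}$\Rightarrow$\textrm{(i)} gives the stated equivalence of \textrm{(i)} with \textrm{(ii)}$+$\textrm{(iii)} in the $\sigma$-unital case. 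The delicate points to watch are keeping the two semicontinuity directions straight (upper semicontinuity of $y\mapsto\Vert b+H_y\Vert$ on $\beta X$ versus lower semicontinuity of $R\mapsto\Vert b+R\Vert$ on $\Prim(M(A))$) and correctly identifying $\tilde J_x$ with $\ker\widetilde{H(x)}$ so that the cluster point $Q$ can be placed in both closures.
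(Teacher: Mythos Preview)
Your proof is correct. For (i)$\Rightarrow$(iii) and (i)$\Rightarrow$(ii) (under $\sigma$-unitality) you follow essentially the same route as the paper, including the key invocation of Lemma~\ref{Lemma 2.1} to separate the closures of $\tilde Y$ and $\widetilde{H(x)}$.

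For (ii)$+$(iii)$\Rightarrow$(i), however, you take a genuinely different path. The paper constructs, via the Dauns--Hofmann theorem, a central element $z=\theta_A(g\circ\phi)$ with $g(x)=0$ and $g(\phi(Y))=\{1\}$, shows $z\in H_x$ (hence $zb\in H_x$), and then estimates $\Vert b-zb\Vert<\epsilon$ using Proposition~\ref{Prop 1.3}(iii). You instead argue directly on primitive ideals: for each $Q\in\Prim(M(A))$ with $\overline\phi(Q)=x$ you take a net $\tilde P_\alpha\to Q$, observe that eventually $P_\alpha\in V$, and use lower semicontinuity of $R\mapsto\Vert b+R\Vert$ to get $\Vert b+Q\Vert\le\epsilon$. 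Your argument is shorter and avoids the Dauns--Hofmann machinery entirely; the paper's argument, on the other hand, produces an explicit approximant $zb\in H_x$, which is in the spirit of Proposition~\ref{Prop 1.5} and the spectral-synthesis viewpoint. Both are valid, and your handling of the topology (passing from a neighbourhood in $X_\phi$ to one in $\beta X$ via the openness of $X$ in $\beta X$) is sound.
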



\begin{proof} (i)$\Rightarrow$(iii) Suppose that $H_x=\tilde J_x$ and
let $b\in \tilde J_x$ and $\epsilon>0$. Then by the upper
semi-continuity of norm functions there is an neighbourhood $U$ of
$x$ in $X_{\phi}$ such that $\Vert b+H_y\Vert<\epsilon$ for all
$y\in U$. Set $V=\phi^{-1}(U)$. Then $H(x) \subseteq V$ and for all
$P\in V$
$$ \Vert b+\tilde{P}\Vert \leq \Vert b+\tilde{J}_{\phi(P)}\Vert
\leq \Vert b+H_{\phi(P)}\Vert <\epsilon.$$

(ii)$+$(iii)$\Rightarrow$(i) Let $b\in \tilde J_x$ and let
$\epsilon>0$ be given. Then by assumption there is an open set
$V\subseteq {\rm Prim}(A)$ with $H(x)\subseteq V$ such that $\Vert
b+\tilde P\Vert<\epsilon$ for all $P\in V$. Set $Y={\rm
Prim}(A)\setminus V$. Then $Y$ is closed and $Y\cap H(x)$ is empty.
Since $\phi$ is locally closed at $x$, it follows that $x$ does not
belong to the closure of $\phi(Y)$ in $X_{\phi}$. Hence there exists
$g\in C^b(X_{\phi})$ with $0\le g\le 1$ such that $g(\phi(Y))=\{1\}$
and $g(x)=0$. Let $z=\theta_A(g\circ\phi)\in Z(M(A))$ so that $0\leq
z\leq 1$ and $z+\tilde Q=g(\phi(Q))(1+\tilde Q)$ for all $Q\in {\rm
Prim}(A)$. Let $R\in {\rm Prim}(M(A))$ with $\overline{\phi}(R)=x$.
There is a net $(P_{\alpha})$ in ${\rm Prim}(A)$ such that
$\tilde{P_{\alpha}}\to R$. Then $x=\overline{\phi}(R)=
\lim\phi(P_{\alpha})$ and so $g(\phi(P_{\alpha})) \to g(x)=0$. Thus
$\Vert z+\tilde P_{\alpha}\Vert\to 0$ and so, since $z\in Z(M(A))$,
$\Vert z+R\Vert=0$. It follows that $z\in H_x$ and hence $zb\in
H_x$. But
$$\Vert zb-b\Vert=\sup\{\Vert (zb-b)+\tilde Q\Vert: Q\in {\rm
Prim}(A)\}=\sup\{\Vert (zb-b)+\tilde Q\Vert: Q\in V\}<\epsilon.$$
Since $\epsilon$ was arbitrary, $b\in H_x$ and hence $\tilde
J_x=H_x$.

(i)$\Rightarrow$(ii) (assuming that $A$ is $\sigma$-unital). Suppose
that $\phi$ is not locally closed at $x$ and let $Y$ be a closed
subset of ${\rm Prim}(A)$ such that $x$ lies in the closure of
$\phi(Y)$ but $H(x)\cap Y$ is empty. Let $W$ be the closure of
$\tilde Y$ in ${\rm Prim}(M(A))$. Then $\overline{\phi}(W)$ is a
compact and hence closed subset of $\beta X$ containing $\phi(Y)$.
Hence there exists $R\in W$ such that $\overline\phi(R)=x$ and
therefore $R\supseteq H_x$. By Proposition~\ref{Prop 1.1}(iv), the
closure of $\tilde H(x)=\{\tilde{P}: P\in \phi^{-1}(x)\}$ in ${\rm
Prim}(M(A))$ is equal to ${\rm Prim}(M(A)/\tilde J_x)$ (where the
latter is identified with the hull of $\tilde{J_x}$ in
$\Prim(M(A))$). But, since $A$ is $\sigma$-unital, $W$ is disjoint
from ${\rm Prim}(M(A)/\tilde J_x)$ by Lemma~\ref{Lemma 2.1} Thus
$R\not\supseteq \tilde J_x$ and so $\tilde J_x\ne H_x$.
\end{proof}


\noindent In particular, we notice that if $A$ is $\sigma$-unital then
a necessary condition for spectral synthesis at $x\in X_{\phi}$
is that $\phi$ should be locally closed at $x$.

\bigskip

 To understand the second sub-property (property (iii) of
Proposition~\ref{Prop 2.2}) into which we have analyzed the property
of being strictly closed, we introduce the idea of local modularity.
To define this it is helpful to have the following notation. For
$x\in X_{\phi}$, let $\partial H(x)$ be the boundary and $U(x)$ the
interior of $H(x)$ in ${\rm Prim}(A)$. We say that $J_x$ is {\sl
locally modular} if for each $P\in \partial H(x)$ there exists a
relatively open neighbourhood $V$ of $P$ in ${\rm Prim}(A)\setminus
U(x)$ such that $A/\ker V$ is a unital C$^*$-algebra.

For instance, if $x$ is an isolated point of $X_{\phi}$ then $H(x)$
is clopen in ${\rm Prim}(A)$ and so $\partial H(x)$ is empty and
hence $J_x$ is vacuously locally modular. Secondly, if
$J_x\not\supseteq Z'(A)$ (that is, $x\in U_{\phi}$ ) then by upper
semi-continuity of norm functions and functional calculus we may
find $z\in Z'(A)$ such that $z+J_y= 1_{A/J_y}$ for all $y$ in an
open neighbourhood $V$ of $x$ in $X_{\phi}$ (cf. the proof of
\cite[Proposition 2.3]{Xalg}). Hence $z+P=1_{A/P}$ for all $P\in
W=\phi^{-1}(V)$, and $H(x)\subseteq W$, so again $J_x$ is locally
modular. On the other hand, if there exists $P\in \partial H(x)$
such that $A/P$ is non-unital then clearly $J_x$ is not locally
modular.

The definition of local modularity is intrinsic to $A$, in that it
does not mention $M(A)$, and it is a condition that should be
possible to check in concrete cases. The following equivalent
condition, however, seems easier to work with, although it does
involve $M(A)$. To describe this, we need a slight variant of the
definition of $\sim$. Recall from \cite{Som} that for $P, Q\in
\Prim(A)$ we write $P\sim Q$ if $P$ and $Q$ cannot be separated by
disjoint open subsets of $\Prim(A)$ (for a fuller discussion, see
Section 5 below). For the multiplier algebra $M(A)$ of a
$C_0(X)$-algebra $A$, we define $\sim_x$ as follows. For $x\in
X_{\phi}$ and $Q, R\in {\rm Prim}(M(A))\setminus \tilde U(x)$ we say
that $Q\sim_x R$ if there is a net $(P_{\alpha})$ in ${\rm
Prim}(A)\setminus U(x)$ such that $(\tilde P_{\alpha})$ converges to
both $Q$ and $R$. If $H(x)$ has empty interior then $\sim_x$
coincides with the relation $\sim$ on $\Prim(M(A))$ because the
canonical image of ${\rm Prim}(A)$ is dense in ${\rm Prim}(M(A))$.
Otherwise $Q\sim_x R\Rightarrow Q\sim R$, but the converse need not
hold.

For the next lemma, we need the definition of a primal ideal and of
the topology $\tau_s$. An ideal $J$ in a C$^*$-algebra $A$ is {\sl
primal} if whenever $I_1, \ldots, I_n$ is a finite collection of
ideals of $A$ with the product $I_1\ldots I_n=\{0\}$ then
$I_i\subseteq J$ for at least one $i\in \{ 1,\ldots , n\}$
\cite{AB}. Every primitive ideal is prime and hence primal. The set
of proper primal ideals of $A$ is denoted ${\rm Primal}'(A)$. The
$\tau_s$ topology on ${\rm Primal}'(A)$ is defined to be the weakest
topology for which all the norm functions $I\to \Vert a+I\Vert$
$(a\in A,\ I\in {\rm Primal}'(A))$ are continuous (see \cite[Section
II]{Fel}). If $A$ is unital then ${\rm Primal}'(A)$ is
$\tau_s$-compact \cite[Proposition 4.1]{Rob}.


\begin{lemma} \label{Lemma 2.3}  Let $A$ be a $C_0(X)$-algebra with base
map $\phi$ and let $x\in X_{\phi}$. Consider the following
conditions:

{\rm (i)} $J_x$ is locally modular;

{\rm (ii)} for all $P\in \partial H(x)$ and $R\in {\rm
Prim}(M(A)/A)$, $\tilde P\not\sim_x R$.

\noindent Then {\rm (i)}$\Rightarrow${\rm (ii)}, and {\rm (i)} and
{\rm (ii)} are equivalent if $A$ is $\sigma$-unital.
\end{lemma}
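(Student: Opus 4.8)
The plan is to prove Lemma~\ref{Lemma 2.3} by unpacking the definitions of local modularity and of the relation $\sim_x$, and then translating statements about unitality of quotients $A/\ker V$ into statements about whether certain nets $(\tilde P_\alpha)$ can converge simultaneously to a boundary point $\tilde P$ and to an ideal $R$ lying over $A$. The central object throughout will be the Dauns--Hofmann central element $z = \theta_A(g\circ\phi)$ used to ``separate'' the boundary from the relevant part of $\Prim(M(A))$, exactly as in the proof of Proposition~\ref{Prop 1.5} and Proposition~\ref{Prop 2.2}. The key technical fact I would exploit is that $R\in\Prim(M(A)/A)$ means precisely $R\supseteq A$, so that for such $R$ the value $\Vert c+R\Vert$ is governed by limits along nets $\tilde P_\alpha\to R$ of quantities involving $\Vert c+\tilde P_\alpha\Vert$; and $A/\ker V$ being unital is detected by the existence of a (local) multiplier $p$ with $p+\tilde P = 1+\tilde P$ for $P\in V$.

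\smallskip

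First I would prove the implication {\rm (i)}$\Rightarrow${\rm (ii)}, which should hold without any countability hypothesis. Assuming $J_x$ is locally modular, fix $P\in\partial H(x)$ and suppose, for contradiction, that there is some $R\in\Prim(M(A)/A)$ with $\tilde P\sim_x R$; so there is a net $(P_\alpha)$ in $\Prim(A)\setminus U(x)$ with $\tilde P_\alpha\to\tilde P$ and $\tilde P_\alpha\to R$ simultaneously. Local modularity supplies a relatively open neighbourhood $V$ of $P$ in $\Prim(A)\setminus U(x)$ with $A/\ker V$ unital; let $p\in A$ map to the identity of $A/\ker V$, so $p+\tilde Q = 1+\tilde Q$ for all $Q\in V$. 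Since $\tilde P_\alpha\to\tilde P$ and $V$ is a neighbourhood of $P$ in the subspace $\Prim(A)\setminus U(x)$, the net is eventually in $V$, and one gets $\Vert (1-p)+\tilde P_\alpha\Vert\to 0$. But $\tilde P_\alpha\to R$ forces $\Vert(1-p)+R\Vert = \lim\Vert(1-p)+\tilde P_\alpha\Vert = 0$, giving $1-p\in R$; since $p\in A\subseteq R$, this yields $1\in R$, contradicting that $R$ is proper. Hence $\tilde P\not\sim_x R$. The only point requiring care here is the passage from ``$V$ open in $\Prim(A)\setminus U(x)$'' to ``$\tilde P_\alpha$ eventually in $\tilde V$'': because the net was chosen in $\Prim(A)\setminus U(x)$ and $\tilde{}$ is a homeomorphism onto its (open) image, convergence $\tilde P_\alpha\to\tilde P$ with $P\in V$ relatively open does the job.

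\smallskip

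For the converse {\rm (ii)}$\Rightarrow${\rm (i)} under $\sigma$-unitality, I would argue contrapositively: suppose $J_x$ is \emph{not} locally modular and produce $P\in\partial H(x)$ and $R\in\Prim(M(A)/A)$ with $\tilde P\sim_x R$. Failure of local modularity gives a point $P\in\partial H(x)$ such that \emph{every} relatively open neighbourhood $V$ of $P$ in $\Prim(A)\setminus U(x)$ has $A/\ker V$ non-unital. The strategy is to build, from this local non-unitality, a limit ideal $R$ over $A$ that is reached along a net from $\Prim(A)\setminus U(x)$ converging also to $\tilde P$. Here I would form the net of neighbourhoods $V$ of $P$ directed by reverse inclusion; for each $V$, non-unitality of $A/\ker V$ means $1+\tilde J_V$ is not a limit of elements of $(A+\tilde J_V)/\tilde J_V$ in norm, which (using Proposition~\ref{Prop 1.1} and the $\sigma$-unital surjectivity of $M(A)\to M(A/\ker V)$ via \cite[Theorem 10]{SAW}, as in Lemma~\ref{Lemma 2.1}) produces elements witnessing that the closure of $\tilde V$ meets $\Prim(M(A)/A)$. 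Choosing points $P_V\in V$ and a subnet so that $(\tilde P_V)$ converges, the limit $R$ lies in $\Prim(M(A)/A)$ while $\tilde P_V\to\tilde P$ as well, giving $\tilde P\sim_x R$.

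\smallskip

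The hard part will be the construction in the converse direction: turning the purely local failure of unitality of all $A/\ker V$ into an actual convergent net producing a single ideal $R\supseteq A$. The delicate issues are (a) verifying that $R$ can be taken \emph{proper} (i.e. that one does not accidentally land at $M(A)$ itself) and (b) extracting a genuine convergent subnet whose limit lies over $A$, which is where $\sigma$-unitality is essential — it guarantees $X_\phi$ is $\sigma$-compact and normal, underwriting the separation arguments and the surjectivity of the canonical map onto $M(A/\ker V)$ that let us certify that the relevant boundary multipliers fail to be reachable from within $A$. I expect the cleanest route is to phrase non-unitality of $A/\ker V$ directly as the existence of $R_V\in\Prim(M(A)/\ker V)$ with $R_V\supseteq A/\ker V$, pull these back to $\Prim(M(A))$, and then run a compactness/convergence argument on the closure of $\tilde V$ as $V$ shrinks to $P$, matching the machinery already deployed in Lemma~\ref{Lemma 2.1} and Proposition~\ref{Prop 2.2}.
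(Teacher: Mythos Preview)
Your argument for (i)$\Rightarrow$(ii) is correct and essentially the paper's, phrased elementwise with the local unit $p$ rather than in terms of the hull of $\widetilde{\ker V}$.

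For (ii)$\Rightarrow$(i) there is a genuine gap. The sentence ``Choosing points $P_V\in V$ and a subnet so that $(\tilde P_V)$ converges, the limit $R$ lies in $\Prim(M(A)/A)$'' is unjustified: an arbitrary choice $P_V\in V$ (for instance $P_V=P$ for every $V$) gives a net converging only to $\tilde P$, with no reason to cluster anywhere over $A$. What is missing is a diagonalisation linking your two ingredients. From non-unitality of $A/\ker V$ one does obtain (as your final paragraph correctly suggests) a maximal ideal $M_V\supseteq A+\widetilde{\ker V}$, so that $M_V\in\Prim(M(A)/A)\cap\overline{\tilde V}$; compactness of $\Prim(M(A)/A)$ then yields a subnet $M_{V_\beta}\to R$. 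But to conclude $\tilde P\sim_x R$ one must still argue: given open $U_1\ni\tilde P$ and $U_2\ni R$, pick $\beta$ large enough that $V_\beta\subseteq\{Q:\tilde Q\in U_1\}$ and $M_{V_\beta}\in U_2$; since $M_{V_\beta}\in\overline{\tilde V_\beta}$, the set $\tilde V_\beta\cap U_2$ is non-empty and lies inside $U_1\cap U_2\cap(\Prim(A)\setminus U(x))^\sim$. Your sketch conflates the $M_V$'s with the points $P_V\in V$ and omits this step entirely. (Incidentally, once repaired this route does not need $\sigma$-unitality: the implication ``$A/\ker V$ non-unital $\Rightarrow A+\widetilde{\ker V}\ne M(A)$'' follows from unitality of $M(A)$ alone, not from surjectivity of $M(A)\to M(A/\ker V)$, so your diagnosis of where $\sigma$-unitality enters is off.)

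The paper's proof of (ii)$\Rightarrow$(i) is quite different and does use $\sigma$-unitality essentially. It works directly with a strictly positive element $u$: first, (ii) forces $A+\tilde Q=M(A)$ and hence $\Vert(1-u)+\tilde Q\Vert<1$ for every $Q\in\partial H(x)$; then a contradiction argument using the $\tau_s$-compactness of $\Primal'(M(A))$ upgrades this pointwise bound to a uniform bound $\Vert(1-u)+\tilde Q\Vert<1-\epsilon$ on some relative neighbourhood $V$ of $P$; finally functional calculus applied to $u$ produces an explicit $v\in A$ with $v+\ker V$ the identity of $A/\ker V$.
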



\begin{proof} Suppose first that (i) holds, and let $P\in
\partial H(x)$. Let $V$ be an open neighbourhood of $P$ in ${\rm
Prim}(A)\setminus U(x)$ such that $A/\ker V$ is unital. Let $W$ be
the closure of $V$ in ${\rm Prim}(A)$, so that $\ker V=\ker W$.
Write $J=\ker W$ (so that $W= {\rm Prim}(A/J)$) and recall that the
quotient map $q_J: A\to A/J$ has a canonical extension $\tilde q_J:
M(A)\to M(A/J)=A/J$. For each $b\in M(A)$ there exists $a\in A$ such
that $b-a\in\ker \tilde q_J=\tilde J$, so $A+\tilde J=M(A)$. Thus if
$R\in {\rm Prim}(M(A)/A)$ then $R\not\supseteq\tilde J$. It follows
that $\tilde W= {\rm Prim}(M(A)/\tilde J)$, a closed subset of ${\rm
Prim}(M(A))$. Hence if $(P_{\alpha})$ is a net in ${\rm
Prim}(A)\setminus U(x)$ with $P_{\alpha}\to P$ then eventually
$P_{\alpha}\in V$, so all the limits of the net $(\tilde
P_{\alpha})$ in ${\rm Prim}(M(A))$ lie in $\tilde W$. Thus $\tilde
P\not\sim_x R$ for any $R\in {\rm Prim}(M(A)/A)$.

Conversely, suppose that (ii) holds and that $A$ is $\sigma$-unital. Let
$u$ be a strictly positive element in $A$ with $\Vert u\Vert=1$. Let $Q\in \partial H(x)$
and suppose that $A+\tilde Q\ne M(A)$.
Then there exists a maximal ideal $M$ of $M(A)$
with $M\supseteq A+\tilde Q$, and hence $M\sim_x \tilde Q$ contradicting (ii).
Thus $A+\tilde Q=M(A)$, so $A/Q$ is unital
and $\Vert (1-u)+\tilde Q\Vert<1$.

Let $P\in \partial H(x)$ and suppose, for a contradiction, that
there is a net $(P_{\alpha})$ in ${\rm Prim}(A)\setminus U(x)$ with
$P_{\alpha}\to P$ and $\Vert (1-u)+\tilde P_{\alpha} \Vert\to 1$. By
the $\tau_s$-compactness of ${\rm Primal}'(M(A))$, and by passing to
a subnet if necessary, we may assume that there exists $J\in {\rm
Primal}'(M(A))$ such that $\tilde P_{\alpha}\to J$ ($\tau_s$). Hence
$\Vert (1-u)+J\Vert=1$ and so there exists $R\in {\rm Prim}(M(A)/J)$
such that $\Vert (1-u)+R\Vert=1$ \cite[3.3.6]{Dix}. Since $\tilde
P_{\alpha}\to J$ $(\tau_s)$, $\tilde P_{\alpha}\to R$ and so
$R\not\supseteq A$ by (ii). Then $R=\tilde Q$, where $Q:=R\cap A\in
{\rm Prim}(A)$, and so $P_{\alpha}\to Q$. Since $\phi$ is
continuous, $\phi(Q)=\phi(P)=x$ and so $Q\in H(x)$. But
$P_{\alpha}\in {\rm Prim}(A)\setminus U(x)$ for all $\alpha$ and so
$Q\in \partial H(x)$, contradicting the fact that $\Vert
(1-u)+\tilde Q\Vert=1$. Thus no such net $(P_{\alpha})$ exists and
so there exists $\epsilon>0$ and a neighbourhood $V$ of $P$ in ${\rm
Prim}(A)\setminus U(x)$ such that $\Vert (1-u)+\tilde
Q\Vert<1-\epsilon$ for all $Q\in V$.

Now let $f$ be a continuous function on $[0,1]$ with $f(0)=0$,
$f([\epsilon, 1])=\{ 1\}$, and $\Vert f\Vert=1$, and set $v=f(u)$.
Then $(1-v)+\tilde Q=0$ for all $Q\in V$ and so $v+\ker V$ is the
identity in $A/\ker V$. Hence $J_x$ is locally modular, as required.
\end{proof}


 The next proposition shows part of the connection between local
modularity and property (iii) of Proposition~\ref{Prop 2.2}.


\begin{prop} \label{Prop 2.4} Let $A$ be a $C_0(X)$-algebra with
base map $\phi$ and let $x\in X_{\phi}$. If $J_x$ is locally modular
then property (iii) of Proposition~\ref{Prop 2.2} holds, namely for
each $b\in \tilde J_x$ and $\epsilon>0$ there is an open set
$V\subseteq {\rm Prim}(A)$ with $H(x)\subseteq V$ such that $\Vert
b+\tilde P\Vert<\epsilon$ for all $P\in V$. Hence if $J_x$ is
locally modular and $\phi$ is locally closed at $x$ then $H_x$ is
strictly closed.
\end{prop}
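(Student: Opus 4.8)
The plan is to prove property (iii) by working one point of $H(x)$ at a time and then taking a union, so that no compactness of $\partial H(x)$ is needed. Fix $b\in\tilde J_x$ and $\epsilon>0$. On $U(x)\subseteq H(x)$ the norm $\|b+\tilde P\|$ already vanishes, since $b\in\tilde J_x\subseteq\tilde P$ for every $P\supseteq J_x$; so the only work is to produce, for each $P_0\in H(x)\setminus U(x)\subseteq\partial H(x)$, an open neighbourhood $W_{P_0}$ of $P_0$ in $\Prim(A)$ on which $\|b+\tilde P\|<\epsilon$. The union of $U(x)$ with these $W_{P_0}$ is then the required open set $V$.

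The key device is local modularity. At $P_0$ it furnishes a relatively open set $V_{P_0}=O_0\cap(\Prim(A)\setminus U(x))$ (with $O_0$ open in $\Prim(A)$ and $P_0\in O_0$) such that $A/J_{P_0}$ is unital, where $J_{P_0}=\ker V_{P_0}=\ker\overline{V_{P_0}}$ and $\overline{V_{P_0}}=\{P\in\Prim(A):P\supseteq J_{P_0}\}$. Lifting the identity of $A/J_{P_0}$ to $v\in A$, one has $v+P=1_{A/P}$ for every $P\in\overline{V_{P_0}}$. I would then establish the crucial norm identity
$$\|b+\tilde P\|=\|bv+P\|\qquad(P\in\overline{V_{P_0}}),$$
by noting that for such $P$ the algebra $A/P$ is unital, so $M(A)/\tilde P$ embeds isometrically in $M(A/P)=A/P$ and, writing $\tilde q_P:M(A)\to M(A/P)$ for the canonical extension of the quotient map, the image of $b$ is $\tilde q_P(b)=\tilde q_P(b)\,\tilde q_P(v)=bv+P$. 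This converts a question about the multiplier norm of $b$ into one about the genuine norm of the element $bv$, which lies in $J_x$ because $b\in\tilde J_x$ and $v\in A$; in particular $\|bv+P\|=0$ for every $P\supseteq J_x$.

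With $b$ replaced locally by $bv\in A$, I would finish using the $C_0(X)$-structure. The set $L=\{P\in\Prim(A):\|bv+P\|\ge\epsilon\}$ is compact by \cite[3.3.7]{Dix}, so $\phi(L)$ is a compact, hence closed, subset of $X_\phi$; and $x\notin\phi(L)$, since any $P\in L$ with $\phi(P)=x$ would satisfy $P\supseteq J_x$ and so $\|bv+P\|=0$. As $X_\phi$ is completely regular, choose $g\in C^b(X_\phi)$ with $0\le g\le 1$, $g(x)=0$ and $g(\phi(L))=\{1\}$, and set
$$W_{P_0}=U(x)\cup\bigl(O_0\cap\{P\in\Prim(A):g(\phi(P))<\tfrac12\}\bigr).$$
This is open, by continuity of $g\circ\phi$, and contains $P_0$. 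On $U(x)$ the norm is $0$; on the remaining part $P\in O_0\setminus U(x)=V_{P_0}\subseteq\overline{V_{P_0}}$, so $\|b+\tilde P\|=\|bv+P\|$, while $g(\phi(P))<1$ forces $P\notin L$ and hence $\|bv+P\|<\epsilon$. Taking the union over $P_0\in H(x)$ yields property (iii), and combining it with local closedness of $\phi$ at $x$ gives strict closure of $H_x$ by the implication (ii)$+$(iii)$\Rightarrow$(i) of Proposition~\ref{Prop 2.2}.

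The step I expect to be the main obstacle is the passage from the multiplier norm $\|b+\tilde P\|$ to the algebra norm $\|bv+P\|$: locally replacing the multiplier $b$ by an honest element of the ideal $J_x$ is exactly what local modularity is designed to permit, and it is what circumvents the fact that norm functions on $\Prim(M(A))$ are only lower semicontinuous (so that the sublevel set $\{\,\|b+\tilde P\|<\epsilon\,\}$ need not be open by itself). Once this reduction is in place, the compactness of superlevel sets together with complete regularity of $X_\phi$, pulled back through the continuous map $\phi$, supplies a genuinely continuous function with which to carve out the open neighbourhood.
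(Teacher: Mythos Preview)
Your argument is correct and takes a genuinely different route from the paper's. The paper proceeds by contradiction: assuming a net $(P_\alpha)$ in $\Prim(A)\setminus U(x)$ converges to some $P\in\partial H(x)$ with $\|b+\tilde P_\alpha\|\ge\epsilon$, it uses compactness of the superlevel set $\{R\in\Prim(M(A)):\|b+R\|\ge\epsilon\}$ to extract a limit $R\in\Prim(M(A))$ with $\tilde P\sim_x R$, and then invokes Lemma~\ref{Lemma 2.3} (the implication (i)$\Rightarrow$(ii)) to force $R=\tilde Q$ for some $Q\in H(x)$, contradicting $b\in\tilde J_x$. Thus the paper works inside $\Prim(M(A))$ and relies on the $\sim_x$ characterisation of local modularity.

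You instead use the \emph{definition} of local modularity directly: the local unit $v$ lets you replace the multiplier $b$ by $bv\in J_x\subseteq A$ via the identity $\|b+\tilde P\|=\|bv+P\|$ on $\overline{V_{P_0}}$, after which compactness of $\{P:\|bv+P\|\ge\epsilon\}$ in $\Prim(A)$ (available because $bv\in A$) and complete regularity of $X_\phi$ produce the desired open neighbourhood. Your approach avoids $\Prim(M(A))$ and Lemma~\ref{Lemma 2.3} altogether, at the cost of the small detour through $X_\phi$ needed because $\Prim(A)$ is not Hausdorff; the paper's approach is shorter once Lemma~\ref{Lemma 2.3} is in hand and ties the result more directly to the corona-type obstruction $\tilde P\sim_x R$ with $R\supseteq A$. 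One minor remark: since $A/P$ is unital for $P\in\overline{V_{P_0}}$, the map $M(A)/\tilde P\to M(A/P)=A/P$ is in fact an isomorphism (not just an embedding), which is what makes $\|b+\tilde P\|=\|bv+P\|$ immediate.
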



\begin{proof} Suppose that $J_x$ is locally modular and let $b\in
\tilde{J}_x$ and $\epsilon >0$. Let $P\in \partial H(x)$ and suppose
for a contradiction that there is a net $(P_{\alpha})$ in ${\rm
Prim}(A)\setminus U(x)$ such that $P_{\alpha}\to P$ and $\Vert
b+\tilde P_{\alpha}\Vert\ge\epsilon$. Then by compactness of the set
$W=\{R\in {\rm Prim}(M(A)): \Vert b+R\Vert\ge\epsilon\}$, and by
passing to a subnet of $(P_{\alpha})$ if necessary, there exists
$R\in W$ such that $\tilde P_{\alpha}\to R$. Since $J_x$ is locally
modular, it follows from Lemma~\ref{Lemma 2.3} that $R=\tilde{Q}$
for some $Q\in {\rm Prim}(A)$. Since $P_{\alpha}\to Q$,
$\phi(Q)=\phi(P)=x$ and so $b\in \tilde{Q}$, contradicting the fact
that $\Vert b+\tilde{Q}\Vert\geq\epsilon$. Therefore no such net
$(P_{\alpha})$ exists. Thus there is an open set $V_P$ containing
$P$ such that $\Vert b+\tilde Q\Vert<\epsilon$ for all $Q\in V_P$.
Taking $V=U(x)\cup \bigcup_{P\in \partial H(x)} V_P$ gives the
required set $V$.

The final statement now follows from Proposition~\ref{Prop 2.2}
((ii)$+$(iii)$\Rightarrow$(i)).
\end{proof}


 It will follow from Theorem~\ref{Thm 2.6} that the converse to
 Proposition~\ref{Prop 2.4} holds if $A$ is separable. For a general $\sigma$-unital
$C_0(X)$-algebra $A$, however, it is possible for property (iii) of
Proposition~\ref{Prop 2.2} to hold for a particular $x\in X_{\phi}$
without $J_x$ being locally modular, see Example~\ref{Ex 3.5}.
Nevertheless the relation between the two properties is very close,
as we shall see.

To show this, we need the following theorem from \cite[Theorem
2.5]{multid}. For an element $a$ in a C$^*$-algebra $A$, let ${\rm
sp}(a)$ denote the spectrum of $a$; and for $a\ge 0$, let $\min {\rm
sp}(a)$ be the smallest number in ${\rm sp}(a)$. The function $g$
from the unit interval $[0,1]$ to the space $C[0,1]$ is as follows
(where for $r\in [0,1]$, $g_r$ is the continuous function on $[0,1]$
corresponding to $r$):

$$g_0(t)=1 \hbox{ for all } t\in [0,1];$$

$$\hbox{ for }0<r\le 1/2,\ \ \ g_r(t)=
\begin{cases}0    &(0\le t\le r/2)\cr
 (2t/r) -1 &(r/2\le t\le r)\cr
 1   &(r\le t\le 1);
 \end{cases}$$
$$g_r=g_{1/2} \hbox{ for }r\ge 1/2.$$




\begin{thm} \label{Thm 2.5}  Let $A$ be a $\sigma$-unital
$C_0(X)$-algebra with base map $\phi$ and set $X_{\phi}={\rm
Im}(\phi)$.
 Let $u$ be a strictly
positive element in $A$ with $\Vert u\Vert=1$. Let $f\in C^b(X_{\phi})$
with $0\leq f\leq 1$, let $U$ be the cozero set of $f$ and let $V=\{
x\in U: 2 \min {\rm sp}(u+J_x)\le f(x)\}$. Let
${\rm cl}(U)$ and ${\rm cl}(V)$ be the closures of $U$ and $V$ respectively in
$X_{\phi}$. Then there exists $b\in M(A)$ with $0\le b\le 1$ such that

 {\rm (i)} $b+ \tilde{J_x}=g_{f(x)}(u+\tilde{J_x})\ \ \ \ \ \ \ \ (x\in X_{\phi})$;

 {\rm (ii)} $b\in A+H_x\subseteq A+\tilde{J_x}$ for all $x\in U$;

 {\rm (iii)} $1-b\in \tilde{J_x}$ for all $x\in X_{\phi}\setminus U$ and $1-b\in H_x$
 for all $x\in X_{\phi}\setminus{\rm cl}(U)$;

 {\rm (iv)}  $\Vert (1-b)+\tilde{J_x}\Vert=1$ for all $x\in V$ and $\Vert
(1-b)+H_x\Vert= 1$ for all $x\in {\rm cl}(V)$.

\noindent Furthermore,

 {\rm (v)} $H_x$ is not strictly closed in $M(A)$ for all $x\in
{\rm cl}(V)\setminus U$.
\end{thm}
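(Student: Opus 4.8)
The plan is to produce, for each $x\in {\rm cl}(V)\setminus U$, a single multiplier lying in $\tilde{J}_x$ but not in $H_x$. Since $H_x\subseteq\tilde{J}_x$ always holds by Proposition~\ref{Prop 1.3}(i), this gives $H_x\neq\tilde{J}_x$, and then Proposition~\ref{Prop 1.3}(ii) yields immediately that $H_x$ is not strictly closed. The obvious witness is the element $1-b$, where $b$ is the multiplier of $M(A)$ already constructed and shown to satisfy properties (i)--(iv).

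First I would observe that ${\rm cl}(V)\setminus U\subseteq X_{\phi}\setminus U$, so the first assertion of part (iii) applies and gives $1-b\in\tilde{J}_x$ for every such $x$; thus $1-b$ is a genuine element of the strict closure $\tilde{J}_x$. Next, since trivially ${\rm cl}(V)\setminus U\subseteq{\rm cl}(V)$, the second assertion of part (iv) applies and gives $\Vert(1-b)+H_x\Vert=1$, so in particular $1-b\notin H_x$. Combining the two observations, $1-b\in\tilde{J}_x\setminus H_x$, whence $H_x\subsetneq\tilde{J}_x$, and the conclusion follows from Proposition~\ref{Prop 1.3}(ii).

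I expect no genuine obstacle at this stage, precisely because all the analytic content has been front-loaded into parts (i)--(iv): the construction of $b$, the membership $1-b\in\tilde{J}_x$ off the cozero set $U$, and the norm computation $\Vert(1-b)+H_x\Vert=1$ on ${\rm cl}(V)$ are exactly what those parts deliver. The only point requiring care is the bookkeeping of index sets, namely checking that a point of ${\rm cl}(V)\setminus U$ lies simultaneously in the index set $X_{\phi}\setminus U$ of (iii) and in the index set ${\rm cl}(V)$ of (iv); this is immediate. (The statement is of course vacuous when ${\rm cl}(V)\setminus U$ is empty, so there is nothing to prove in that case.)
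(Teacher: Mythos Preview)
Your derivation of (v) from (iii) and (iv) is correct and is exactly the intended argument: for $x\in{\rm cl}(V)\setminus U$ one has $1-b\in\tilde{J}_x$ by (iii) and $\Vert(1-b)+H_x\Vert=1$ by (iv), so $1-b\in\tilde{J}_x\setminus H_x$ and Proposition~\ref{Prop 1.3}(ii) finishes. Note that this paper does not itself prove Theorem~\ref{Thm 2.5} but quotes it from \cite[Theorem~2.5]{multid}; nonetheless, part (v) is indeed an immediate consequence of (iii) and (iv) in precisely the way you describe, and the cited paper records it for exactly that reason.
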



\noindent In the context of Theorem~\ref{Thm 2.5}, note that if
$x\in U$ and $A/J_x$ is non-unital then $0\in{\rm sp}(u+J_x)$ and
hence $x\in V$.


\begin{thm} \label{Thm 2.6}  Let $A$ be a $\sigma$-unital
$C_0(X)$-algebra with base map $\phi$. Let $x\in X_{\phi}$ and let
$Z$ be a zero set of $X_{\phi}$ with $x\in Z$. Suppose that $J_x$ is
not locally modular. Then there exists $y\in Z$ for which property
(iii) of Proposition~\ref{Prop 2.2} fails, that is, for which there
exists $c\in \tilde J_y$ and $\epsilon>0$ such that there is no open
set $V\subseteq {\rm Prim}(A)$ with $H(y)\subseteq V$ and $\Vert
c+\tilde P\Vert<\epsilon$ for all $P\in V$. In particular, $H_y$ is
not strictly closed.
\end{thm}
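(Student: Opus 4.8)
The plan is to convert the failure of local modularity at $x$ into a single multiplier that witnesses the failure of property (iii) at a suitable point of $Z$, using Theorem~\ref{Thm 2.5} as the main engine. Since $A$ is $\sigma$-unital, fix a strictly positive element $u\in A$ with $\Vert u\Vert =1$. The first fact I would record is that $P\mapsto \min {\rm sp}(u+P)$ is upper semi-continuous on $\Prim(A)$: for $\lambda>0$ the element $c_\lambda:=(\lambda-u)_+=\lambda 1-u+(u-\lambda)_+$ lies in $M(A)$ (its constant part is $\lambda$, the remaining summand $(u-\lambda)_+$ lying in $A$), and $\Vert c_\lambda+\tilde P\Vert=(\lambda-\min{\rm sp}(u+P))_+$, so lower semi-continuity of the multiplier norm functions $Q\mapsto\Vert c_\lambda+Q\Vert$ shows that $\{P:\min{\rm sp}(u+P)<\lambda\}$ is open for each $\lambda>0$.

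Next I would extract the relevant net. Because $J_x$ is not locally modular there is $P_0\in\partial H(x)$ such that every relatively open neighbourhood $W$ of $P_0$ in $\Prim(A)\setminus U(x)$ has $A/\ker W$ non-unital, equivalently $\inf_{P\in W}\min{\rm sp}(u+P)=0$; this is the contrapositive of the definition (and matches Lemma~\ref{Lemma 2.3}, where a maximal ideal $R\supseteq A$ forces the minimal spectra to collapse). Using the upper semi-continuity above to keep the chosen ideals inside the open sets $\{\min{\rm sp}(u+\cdot)<1/n\}$, I obtain a net $(P_\alpha)$ in $\Prim(A)\setminus U(x)$ with $P_\alpha\to P_0$ in $\Prim(A)$ and $\min{\rm sp}(u+P_\alpha)\to 0$. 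Writing $y_\alpha=\phi(P_\alpha)$, continuity gives $y_\alpha\to x$, and since $P_\alpha\supseteq J_{y_\alpha}$ we also have $\min{\rm sp}(u+J_{y_\alpha})\le\min{\rm sp}(u+P_\alpha)\to 0$.

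The heart of the argument is then to choose $f\in C^b(X_\phi)$ with $0\le f\le 1$, vanishing at the point of $Z$ that will become $y$, yet large enough along the net that $2\min{\rm sp}(u+P_\alpha)\le f(y_\alpha)$ for cofinally many $\alpha$. Feeding this $f$ into Theorem~\ref{Thm 2.5} produces $b\in M(A)$ with $0\le b\le 1$, and I set $c=1-b$. For each $P\in\Prim(A)$ one has $b+\tilde P=g_{f(\phi(P))}(u+\tilde P)$, whence $\Vert(1-b)+\tilde P\Vert=1$ as soon as $\min{\rm sp}(u+P)\le f(\phi(P))/2$; in particular $\Vert(1-b)+\tilde P_\alpha\Vert=1$ for the cofinal $\alpha$ above. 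Since the target $y$ satisfies $f(y)=0$, i.e. $y\notin\coz f$, Theorem~\ref{Thm 2.5}(iii) gives $c=1-b\in\tilde J_{y}$, so $c$ is an admissible witness. Finally, because $P_\alpha\to P_0\in H(y)$ in $\Prim(A)$, every open $V\subseteq\Prim(A)$ with $H(y)\subseteq V$ contains $P_0$ and hence contains $P_\alpha$ cofinally, so no such $V$ can satisfy $\Vert c+\tilde P\Vert<\tfrac12$ throughout. Thus property (iii) of Proposition~\ref{Prop 2.2} fails at $y$ with $\epsilon=\tfrac12$, and by the implication (i)$\Rightarrow$(iii) of that proposition $H_y$ is not strictly closed.

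The step I expect to be the main obstacle is the construction of $f$, precisely because its two requirements pull against each other: $f$ must vanish at the target point (to keep $1-b\in\tilde J_y$ via Theorem~\ref{Thm 2.5}(iii)) yet must dominate $2\min{\rm sp}(u+P_\alpha)$ along a net whose base points $y_\alpha$ tend into $Z$. When the $y_\alpha$ can be taken distinct from $x$, the vanishing can be imposed at $y=x\in Z$ and continuity alone, together with $2\min{\rm sp}(u+P_\alpha)\to 0$, suffices. The delicate case is when the offending ideals $P_\alpha$ all lie over $x$ itself, so that $A/J_x$ is non-unital; here $f(x)=0$ would force $\min{\rm sp}(u+P_\alpha)=0$, which need not hold, so one must instead push the vanishing to a nearby point of $Z$ and use upper semi-continuity of $\min{\rm sp}(u+\cdot)$, the non-isolation of $x$ (local modularity being automatic at isolated points), and the normality of $X_\phi$ to locate, in every neighbourhood off $Z$, primitive ideals with arbitrarily small minimal spectrum over which $f$ can be kept suitably large. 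This is exactly where the freedom to choose any zero set $Z\ni x$ is used, and where the original argument needed correction.
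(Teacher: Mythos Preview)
Your outline has the right architecture---feed a well-chosen $f$ into Theorem~\ref{Thm 2.5} and read off $c=1-b$---but the argument as written forces $y=x$, and that is not always possible. In your third paragraph you conclude via ``$P_\alpha\to P_0\in H(y)$''; since $P_0\in\partial H(x)\subseteq H(x)$ and distinct fibres $H(\cdot)$ are disjoint, this pins $y=x$. Example~\ref{Ex 3.5} (namely $A=C(\omega_1+1)\otimes K(H)$ with $x=\omega_1$) shows, however, that $J_x$ can fail to be locally modular while $H_x$ \emph{is} strictly closed; then property~(iii) of Proposition~\ref{Prop 2.2} holds at $x$, and no $c\in\tilde J_x$ can witness its failure there. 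Concretely, $x=\omega_1$ is a P-point, so every $f\in C^b(X_\phi)$ with $f(x)=0$ vanishes on a neighbourhood of $x$; thus $\coz(f)$ eventually misses your $y_\alpha$, you never get $f(y_\alpha)>0$ cofinally, and hence never get $\Vert(1-b)+\tilde P_\alpha\Vert=1$ (recall $g_0\equiv1$, so $f(\phi(P))=0$ gives $(1-b)+\tilde P=0$). Your claim that ``continuity alone \dots\ suffices'' in the easy case is therefore false, and the ``delicate case'' you flag at the end is not a fringe exception but the heart of the matter---and it is left unresolved in your sketch.

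The paper's proof avoids this by not requiring the witnessing ideals to converge to $P_0$. It fixes a compact neighbourhood $D$ of $P_0$, uses $\tilde P_0\sim_x R$ with $R\supseteq A$ (from Lemma~\ref{Lemma 2.3}) to choose, for each $n$, a point $Q_n\in(\interior D)\cap(\Prim(A)\setminus H(x))$ with $\Vert(1-u)+\tilde Q_n\Vert>1-1/2^{n+2}$ and $h(\phi(Q_n))<1/n$ (where $Z=Z(h)$), and simultaneously builds $f=\sum_n f_n$ inductively so that $1/2^{n+1}\le f(x_n)<1/2^n$ for $x_n:=\phi(Q_n)$. Compactness of $D$ then gives a subnet $Q_{n_\alpha}\to Q\in D$, and one sets $y:=\phi(Q)$; the controls force $f(y)=0$ and $h(y)=0$ (so $y\in Z$), and now $Q\in H(y)$ plays the role your $P_0$ was meant to play in the final step. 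The essential difference is that $y$ emerges as the base-image of the limit of a \emph{constructed} sequence in $D$, rather than being declared equal to $x$; this is precisely what allows $y\neq x$ when $x$ is a P-point.
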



\begin{proof} Since $J_x$ is not locally modular, it follows from
Lemma~\ref{Lemma 2.3} that there exist $P\in \partial H(x)$ and
$R\in {\rm Prim}(M(A)/A)$ such that $\tilde P\sim_x R$. Let $D$ be a
compact neighbourhood of $P$ in ${\rm Prim}(A)$. Let $u\in A$ be a
strictly positive element with $\Vert u\Vert=1$. Since $u\in A$, it
follows that $\Vert (1-u)+R\Vert=1$. Hence by \cite[3.3.2]{Dix}, for
each $n$ the set
$$U_n=\{ Q\in {\rm Prim}(M(A)): \Vert (1-u)+ Q\Vert>1-1/2^{n+2}\}$$
is an open neighbourhood of $R$ in ${\rm Prim}(M(A))$. Let $h:X_{\phi}\to [0,1]$ be a continuous function such that $Z(h)=Z$ and,
for $n\ge 1$, let $V_n=\{ Q\in {\rm Prim}(A): h(\phi(Q))<1/n\}$.
Then $V_n$ is an open neighbourhood of $P$ in ${\rm Prim}(A)$. Since
$\tilde P\sim_x R$, it follows that
$$U_1\cap \tilde V_1\cap ({\rm Int}\,D)^{\sim}\cap ({\rm Prim}(A)\setminus U(x))^{\sim}$$
 is a non-empty,
relatively open, subset of $({\rm Prim}(A)\setminus U(x))^{\sim}$.

Since $\Prim(A)\setminus H(x)$ is dense in $\Prim(A)\setminus U(x)$,
we may choose $Q_1\in {\rm Prim}(A)\setminus H(x)$ with $\tilde
Q_1\in U_1\cap \tilde V_1\cap ({\rm Int}\,D)^{\sim}$. Set $x_1=\phi(Q_1)$. Then
$x_1\ne x$ and so there exists a continuous function
$f_1:X_{\phi}\to [0,1/4]$ with $f_1(x_1)=1/4$ and $f_1(x)=0$. For
$n\ge 2$, we will inductively define points $x_n\in X_{\phi}$ and
continuous functions $f_n:X_{\phi}\to [0, 1/2^{n+1}]$ with
$f_n(x_n)=1/2^{n+1}$ and $f_n(x)=0=f_n(x_m)$ for $1\le m\le n-1$.
Note that $x_1$ and $f_1$ satisfy these conditions.

Suppose that $n\geq2$ and that $x_1,\ldots, x_{n-1}$ and
$f_1,\ldots,f_{n-1}$ satisfy the required conditions. Let $W_n=\{
Q\in {\rm Prim}(A): \sum_{i=1}^{n-1} f_{i}(\phi(Q))<1/2^{n+1}\}$.
Then $W_n$ is an open neighbourhood of $P$ and hence, since $\tilde
P\sim_x R$, it follows that
$$U_n\cap \tilde V_n\cap({\rm Int}\,D)^{\sim}\cap
\tilde W_n\cap ({\rm Prim}(A)\setminus U(x))^{\sim}$$
 is a non-empty,
relatively open, subset of $({\rm Prim}(A)\setminus U(x))^{\sim}$.
Thus we may choose $Q_n\in {\rm Prim}(A)\setminus H(x)$ with
$\tilde{Q}_n\in U_n\cap \tilde V_n\cap({\rm Int}\,D)^{\sim}\cap\tilde W_n$.
Set
$x_n=\phi(Q_n)$. Then $x_n\ne x$ and $x_n\ne x_m$ for $1\le m\le
n-1$ because
$$\sum_{i=1}^{n-1} f_{i}(x_m)\ge f_m(x_m)=\frac{1}{2^{m+1}}>
\frac{1}{2^{n+1}}> \sum_{i=1}^{n-1} f_{i}(x_n).$$
Thus there exists a continuous function $f_n:X_{\phi}\to [0,
1/2^{n+1}]$ with $f_n(x_n)=1/2^{n+1}$ and $f_n(x)=0=f_n(x_m)$ for
$1\le m\le n-1$.

Set $f=\sum_{n=1}^{\infty} f_n$. Then $f$ is continuous and, for each $n\geq1$,
$$\frac{1}{2^{n+1}}=f_n(x_n)\leq f(x_n)= \sum_{i=1}^{n-1} f_{i}(x_n)+f_n(x_n)<\frac{1}{2^{n+1}}+\frac{1}{2^{n+1}}=\frac{1}{2^n}.$$
Thus $f(x_n)\to0$ as $n\to\infty$.
Furthermore,
since $\Vert
(1-u)+\tilde{Q}_n\Vert>1-1/2^{n+2}$, it follows that
$$\min {\rm
sp}(u+\tilde{Q}_n) <\frac{1}{2^{n+2}}\le \frac{f(x_n)}{2}$$
 and hence $0\in{\rm sp}(
g_{f(x_n)}(u+\tilde{Q}_n) )$
   for $n\geq 1$.

Since $Q_n\in D$ for all $n$, the compactness of $D$ implies that
there exists $Q\in D$ and a subnet $(Q_{n_{\alpha}})$ of $(Q_n)$
such that $Q_{n_{\alpha}}\to Q$. Set $y=\phi(Q)$. Since $Q_n\in V_n$ ($n\geq1$), $h(x_n)\to
0$ as $n\to
\infty$.  It follows that $h(y)=0$ and hence $y\in Z$. Furthermore,
$f(x_{n_{\alpha}})\to f(y)$ and so $f(y)=0$. Let $b$ be an element of
$M(A)$ corresponding to $f$ as in Theorem~\ref{Thm 2.5}. Then
$1-b\in \tilde J_y$ by Theorem~\ref{Thm 2.5}(iii).  Since
$\phi(Q_n)=x_n$ ($n\geq1$), it follows that $\tilde{Q}_n\supseteq
\tilde{J}_{x_n}$ and hence that $b+
\tilde{Q}_n=g_{f(x_n)}(u+\tilde{Q}_n)$ by Theorem~\ref{Thm 2.5}(i).
Then
$$1 \ge\Vert(1-b) + \tilde{Q}_n\Vert \ge \Vert(1+\tilde{Q}_n) -g_{f(x_n)}(u+\tilde{Q}_n)
\Vert
= 1-\min {\rm sp}( g_{f(x_n)}(u+\tilde{Q}_n) )  =1.$$ Hence $\Vert
(1-b)+\tilde Q_{n_{\alpha}}\Vert=1$ for all $\alpha$. Suppose that
$V$ is an open subset of $\Prim(A)$ such that $H(y) \subseteq V$.
Then $Q\in V$ and so eventually $Q_{n_{\alpha}} \in V$. Thus
property (iii) of Proposition~\ref{Prop 2.2} fails at $y$ for $c =
1-b \in \tilde J_y$ and $\epsilon=1/2$. Hence $H_y$ is not strictly
closed by Proposition~\ref{Prop 2.2}.
\end{proof}


\noindent Regarding the hypotheses of Theorem~\ref{Thm 2.6}, we note
that the complete regularity of $X_{\phi}$ implies that every
neighbourhood of $x$ contains a zero set containing $x$. In the
proof of Theorem~\ref{Thm 2.6}, we could have checked that $y \in
{\rm cl}(V)\setminus U$ (in the terminology of Theorem~\ref{Thm
2.5}) and used Theorem~\ref{Thm 2.5}(v) to deduce that $H_y$ is not
strictly closed. We have preferred, however, to obtain the stronger
result that property (iii) of Proposition~\ref{Prop 2.2} fails at
$y$.

\bigskip

 Armed with Theorem~\ref{Thm 2.6}, we can now prove some of the main results
of the paper.


\begin{cor}[{\bf global spectral synthesis}] \label{Cor 2.7}  Let $A$
be a $C_0(X)$-algebra with base map $\phi$. Consider the following
two properties.

{\rm (i)} $H_x$ is strictly closed for all $x\in X_{\phi}$.

{\rm (ii)} $J_x$ is locally modular and $\phi$ is locally closed at
$x$ for all $x\in X_{\phi}$.

\noindent Then {\rm (ii)}$\Rightarrow${\rm (i)}, and {\rm (i)} and
{\rm (ii)} are equivalent if $A$ is $\sigma$-unital.
\end{cor}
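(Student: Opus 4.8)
The plan is to prove Corollary~\ref{Cor 2.7} by applying the pointwise machinery already developed, most notably Proposition~\ref{Prop 2.2}, Proposition~\ref{Prop 2.4}, and Theorem~\ref{Thm 2.6}. The two implications are of quite different character, so I would treat them separately.

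For the implication (ii)$\Rightarrow$(i), which requires no $\sigma$-unitality hypothesis, I would simply observe that it is immediate from the pointwise result already in hand. If (ii) holds, then for every $x\in X_{\phi}$ we know that $J_x$ is locally modular and $\phi$ is locally closed at $x$; by the final sentence of Proposition~\ref{Prop 2.4}, this forces $H_x$ to be strictly closed. Quantifying over all $x\in X_{\phi}$ gives (i). This direction is essentially a one-line deduction.

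The reverse implication (i)$\Rightarrow$(ii), under the assumption that $A$ is $\sigma$-unital, is the substantive half. Here I would fix an arbitrary $x\in X_{\phi}$ and establish the two conjuncts of (ii) separately. The local closedness of $\phi$ at $x$ is immediate: since $A$ is $\sigma$-unital and $H_x$ is strictly closed by (i), the implication (i)$\Rightarrow$(ii) of Proposition~\ref{Prop 2.2} gives at once that $\phi$ is locally closed at $x$. For the local modularity of $J_x$, I would argue by contraposition: suppose $J_x$ is \emph{not} locally modular. Complete regularity of $X_{\phi}$ lets me choose a zero set $Z$ of $X_{\phi}$ with $x\in Z$ (for instance $Z=X_{\phi}$ itself works, or any zero-set neighbourhood of $x$). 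Theorem~\ref{Thm 2.6} then produces a point $y\in Z$ at which property (iii) of Proposition~\ref{Prop 2.2} fails, and in particular $H_y$ is not strictly closed. This contradicts (i). Hence $J_x$ must be locally modular after all.

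The main subtlety I expect is not in the logical structure, which is a clean assembly of prior results, but in recognising that the non-local-modularity of $J_x$ at one point $x$ must be converted, via Theorem~\ref{Thm 2.6}, into a \emph{possibly different} witness point $y$ where strict closure fails. This is exactly what Theorem~\ref{Thm 2.6} is designed to deliver, so the obstacle is really just the correct invocation of that theorem together with the standard fact that in a completely regular space every point lies in a zero set. Once the quantifiers are marshalled---(i) asserts strict closure at \emph{all} points, so a failure at the auxiliary point $y$ already contradicts (i)---the argument closes. Combining both implications yields the stated equivalence for $\sigma$-unital $A$.
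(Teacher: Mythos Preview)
Your proposal is correct and mirrors the paper's own proof: both directions are deduced exactly as you describe, with (ii)$\Rightarrow$(i) from Proposition~\ref{Prop 2.4} and (i)$\Rightarrow$(ii) (for $\sigma$-unital $A$) from Proposition~\ref{Prop 2.2} together with Theorem~\ref{Thm 2.6} applied with $Z=X_{\phi}$. Your remark about the auxiliary witness point $y$ possibly differing from $x$ is precisely the reason the paper phrases this as a global rather than pointwise result.
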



\begin{proof} (ii)$\Rightarrow$(i). This follows from Proposition~\ref{Prop 2.4}.

(i)$\Rightarrow$(ii) (assuming that $A$ is $\sigma$-unital). This
follows from (i)$\Rightarrow$((ii)$+$(iii)) of Proposition~\ref{Prop
2.2} together with Theorem~\ref{Thm 2.6} (taking $Z=X_{\phi}$, for
example).
\end{proof}


\begin{cor}[{\bf spectral synthesis at a point}] \label{Cor 2.8}  Let
$A$ be a $\sigma$-unital $C_0(X)$-algebra with base map $\phi$ and
let $x$ be a $G_{\delta}$-point in $X_{\phi}$. Then $H_x$ is
strictly closed if and only if $J_x$ is locally modular and $\phi$
is locally closed at $x$.
\end{cor}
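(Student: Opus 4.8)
The plan is to reduce the $G_\delta$-point case to what has already been established, namely Proposition~\ref{Prop 2.2} (which gives the equivalence of strict closure of $H_x$ with conditions (ii)$+$(iii) when $A$ is $\sigma$-unital) and Theorem~\ref{Thm 2.6}. The reverse implication---that local modularity of $J_x$ together with local closedness of $\phi$ at $x$ imply $H_x$ strictly closed---requires no hypothesis on $x$ at all; it is exactly Proposition~\ref{Prop 2.4}. So the entire content of the corollary lies in the forward implication: \emph{if $H_x$ is strictly closed then $J_x$ is locally modular and $\phi$ is locally closed at $x$}, and this is where the $G_\delta$ assumption must be used.

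For the forward direction I would argue as follows. Since $A$ is $\sigma$-unital, the implication (i)$\Rightarrow$(ii) of Proposition~\ref{Prop 2.2} immediately gives that $\phi$ is locally closed at $x$, with no further work and no use of the $G_\delta$ hypothesis. It remains to show that $J_x$ is locally modular. I would prove the contrapositive: suppose $J_x$ is \emph{not} locally modular; I must produce a point at which $H$ fails to be strictly closed, and the $G_\delta$ assumption is what forces that point to be $x$ itself. The key device is Theorem~\ref{Thm 2.6}, whose statement allows me to feed in any zero set $Z$ of $X_\phi$ containing $x$ and which then returns a point $y \in Z$ at which property (iii) of Proposition~\ref{Prop 2.2} fails (so $H_y$ is not strictly closed). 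In general $y$ need not equal $x$, which is precisely why the pointwise statement is subtle for arbitrary $x$. But here $x$ is a $G_\delta$-point, and in a completely regular (indeed normal) space a $G_\delta$-point is a zero set: there is a continuous $h:X_\phi \to [0,1]$ with $h^{-1}(0) = \{x\}$, i.e. $Z := Z(h) = \{x\}$.

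The main obstacle---really the whole point---is therefore the verification that a $G_\delta$-point in $X_\phi$ is a zero set (a singleton zero set). This is a standard fact once one knows $X_\phi$ is normal, which the excerpt records for $\sigma$-unital $A$ (see Section~2): in a normal space, a point that is a $G_\delta$ (a countable intersection of open sets $\{x\} = \bigcap_n U_n$) can be separated from each closed complement $X_\phi \setminus U_n$ by a Urysohn function $h_n$ vanishing at $x$, and $h = \sum_n 2^{-n} h_n$ is continuous with $Z(h) = \{x\}$. Applying Theorem~\ref{Thm 2.6} with this $Z = \{x\}$ forces the returned point $y$ to satisfy $y \in Z = \{x\}$, hence $y = x$, and the theorem then asserts that $H_x$ is not strictly closed, contradicting (i). This completes the contrapositive and hence the forward implication. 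The remaining bookkeeping---assembling the two implications---is routine, so I would present the corollary as a short deduction from Proposition~\ref{Prop 2.4}, Proposition~\ref{Prop 2.2}, and Theorem~\ref{Thm 2.6}.
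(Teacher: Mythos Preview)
Your proposal is correct and follows the paper's proof essentially verbatim: the paper invokes Proposition~\ref{Prop 2.4} for one direction and Proposition~\ref{Prop 2.2} together with Theorem~\ref{Thm 2.6} (with $Z=\{x\}$) for the other. The only difference is that you spell out why the $G_\delta$-point $x$ is a zero set in the normal space $X_\phi$, a step the paper leaves implicit.
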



\begin{proof} This follows from Proposition~\ref{Prop 2.4} and from
Proposition~\ref{Prop 2.2} and Theorem~\ref{Thm 2.6} taking the zero
set $Z$ in Theorem~\ref{Thm 2.6} to be $Z=\{x\}$.
\end{proof}


 Recall that a completely regular (Hausdorff) space $X$ is {\sl
perfectly normal} if every closed subset of $X$ is the zero set of a
continuous real-valued function on $X$. Every metric space is
perfectly normal.


\begin{lemma} \label{Lemma 2.9} Let $A$ be a separable $C_0(X)$-algebra
with base map $\phi$. Then $X_{\phi}$ is perfectly normal.
\end{lemma}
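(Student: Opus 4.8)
The plan is to deduce perfect normality of $X_{\phi}$ from two ingredients: the fact, already recorded at the start of this section, that $X_{\phi}$ is completely regular and (since a separable algebra is $\sigma$-unital) normal; and the fact that $X_{\phi}$ is hereditarily Lindel\"of. The guiding principle is that a normal space in which every closed set is a $G_{\delta}$ is automatically perfectly normal: if $F=\bigcap_n U_n$ with each $U_n$ open, then applying Urysohn's lemma to the disjoint closed pairs $(F,\,X_{\phi}\setminus U_n)$ yields continuous functions $f_n\colon X_{\phi}\to[0,1]$ with $f_n(F)=\{0\}$ and $f_n(X_{\phi}\setminus U_n)=\{1\}$, and then $f=\sum_n 2^{-n}f_n$ is continuous with zero set exactly $F$. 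Since normality of $X_{\phi}$ is already in hand, the entire burden reduces to showing that every closed subset of $X_{\phi}$ is a $G_{\delta}$, equivalently that every open subset is an $F_{\sigma}$.

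First I would record that $\Prim(A)$ is second countable, because $A$ is separable \cite[3.3.4]{Dix}, and that a second countable space is hereditarily Lindel\"of. Next I would transfer hereditary Lindel\"ofness to the image. The base map restricts to a continuous surjection $\phi\colon \Prim(A)\to X_{\phi}$, so for an arbitrary subset $T\subseteq X_{\phi}$ the preimage $\phi^{-1}(T)$, being a subspace of the hereditarily Lindel\"of space $\Prim(A)$, is Lindel\"of, and $\phi$ maps it continuously onto $T$. As continuous images of Lindel\"of spaces are Lindel\"of, $T$ is Lindel\"of. Hence every subspace of $X_{\phi}$ is Lindel\"of, i.e. $X_{\phi}$ is hereditarily Lindel\"of.

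Finally I would combine regularity with hereditary Lindel\"ofness to show every open set is $F_{\sigma}$. Given an open set $U\subseteq X_{\phi}$, regularity provides for each $x\in U$ an open $V_x$ with $x\in V_x\subseteq \overline{V_x}\subseteq U$. Since $U$ is itself Lindel\"of, countably many of these, say $V_{x_1},V_{x_2},\ldots$, already cover $U$, and then $U=\bigcup_n \overline{V_{x_n}}$ exhibits $U$ as an $F_{\sigma}$. Consequently every closed subset of $X_{\phi}$ is a $G_{\delta}$, and with normality this gives perfect normality via the function constructed above.

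The one point worth flagging is that one should resist trying to prove $X_{\phi}$ second countable outright: second countability need not survive passage to continuous images (quotients of second countable spaces can fail it), so the argument must instead route through hereditary Lindel\"ofness, which \emph{is} stable under continuous surjections and is exactly strong enough, when paired with the regularity of $X_{\phi}$, to force every open set to be $F_{\sigma}$. I do not anticipate a genuine obstacle beyond correctly assembling these standard topological facts in the right order.
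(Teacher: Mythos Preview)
Your argument is correct, but it proceeds by a genuinely different route from the paper. The paper argues directly in $C^*$-algebraic terms: given an open $V\subseteq X_{\phi}$, it takes the ideal $J$ of $A$ with $\Prim(J)\cong\phi^{-1}(V)$, chooses a strictly positive element $u\in J$ (available since $J$ is separable), and observes that $V=\bigcup_n\phi(\{P:\Vert u+P\Vert\geq 1/n\})$ is a countable union of images of compact sets, hence an $F_{\sigma}$. Your approach instead extracts the purely topological content: $\Prim(A)$ is second countable, hence hereditarily Lindel\"of, and this property survives continuous surjections, so $X_{\phi}$ is hereditarily Lindel\"of; combined with regularity this forces every open set to be $F_{\sigma}$. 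Both arguments reach the same intermediate conclusion and finish identically via normality. Your version has the virtue of isolating exactly which topological property of $\Prim(A)$ is doing the work (second countability rather than the existence of strictly positive elements), while the paper's version is slightly more self-contained within the $C^*$-framework already established and avoids the detour through hereditary Lindel\"ofness.
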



\begin{proof}   Let $V$ be an open subset of $X_{\phi}$. Set
$Y=\phi^{-1}(V)$ and let $J$ be the ideal of $A$ such that ${\rm
Prim}(J)$ is canonically homeomorphic to $Y$. Then $J$ is separable.
Let $u$ be a strictly positive element in $J$. For each $n\in {\bf
N}$, set $Y_n=\{P\in {\rm Prim}(A): \Vert u+P\Vert\ge 1/n\}$. Then
$Y_n$ is compact and so  $\phi(Y_n)$ is a compact (hence closed)
subset of $X_{\phi}$. Since $V=\bigcup_{n=1}^{\infty}\phi(Y_n)$, we
see that $V$ is an $F_{\sigma}$ subset of $X_{\phi}$. But $X_{\phi}$
is normal, and in a normal space an open $F_{\sigma}$ subset is a
cozero set. Thus $V$ is a cozero set.
\end{proof}


\begin{cor} \label{Cor 2.10}  Let $A$ be a separable
$C_0(X)$-algebra with base map $\phi$ and let $x\in X_{\phi}$. Then
$H_x$ is strictly closed if and only if $J_x$ is locally modular and
$\phi$ is locally closed at $x$.
\end{cor}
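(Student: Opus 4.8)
The plan is to deduce this from the point-wise result already established for $G_{\delta}$-points, namely Corollary~\ref{Cor 2.8}. First I would observe that a separable $C^*$-algebra is $\sigma$-unital, so the standing hypotheses of Corollary~\ref{Cor 2.8} are automatically in force; the only thing that remains is to check that an arbitrary $x\in X_{\phi}$ is a $G_{\delta}$-point, after which the two-way equivalence is immediate.

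To supply the $G_{\delta}$ condition I would appeal to Lemma~\ref{Lemma 2.9}, which says that separability of $A$ forces $X_{\phi}$ to be perfectly normal. Since $X_{\phi}$ is completely regular and Hausdorff (hence $T_1$), every singleton $\{x\}$ is closed, and perfect normality then exhibits $\{x\}$ as the zero set of a continuous real-valued function on $X_{\phi}$. A zero set is a countable intersection of open sets, so $\{x\}$ is a $G_{\delta}$ and $x$ is a $G_{\delta}$-point of $X_{\phi}$.

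With this in hand I would simply quote Corollary~\ref{Cor 2.8} for the point $x$ to obtain the stated equivalence. If one prefers a self-contained argument rather than a citation, the same conclusion unfolds as follows: the implication ``$J_x$ locally modular and $\phi$ locally closed at $x$ $\Rightarrow$ $H_x$ strictly closed'' is exactly Proposition~\ref{Prop 2.4}; for the converse, Proposition~\ref{Prop 2.2} (using $\sigma$-unitality) shows that strict closure of $H_x$ forces both local closedness of $\phi$ at $x$ and property (iii) of Proposition~\ref{Prop 2.2}, and then Theorem~\ref{Thm 2.6} applied with the zero set $Z=\{x\}$ (available precisely because perfect normality makes $\{x\}$ a zero set) forces $J_x$ to be locally modular, since otherwise property (iii) would fail at the only available point $y=x$.

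The step I expect to carry the real weight has in fact already been discharged in the preceding results: the analytic content sits in Theorem~\ref{Thm 2.6} and Proposition~\ref{Prop 2.4}, and the topological content in Lemma~\ref{Lemma 2.9}. The only genuinely new observation here is the routine but essential point that perfect normality of $X_{\phi}$ lets us take $Z=\{x\}$ as a zero set, so that the $G_{\delta}$-point hypothesis of Corollary~\ref{Cor 2.8} is met for \emph{every} $x\in X_{\phi}$; I anticipate no substantive obstacle beyond verifying this.
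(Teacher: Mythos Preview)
Your proposal is correct and follows essentially the same route as the paper: invoke Lemma~\ref{Lemma 2.9} to see that separability makes $X_{\phi}$ perfectly normal, so that $\{x\}$ is a zero set (hence $x$ is a $G_{\delta}$-point), and then apply Corollary~\ref{Cor 2.8} or, equivalently, unfold via Proposition~\ref{Prop 2.4}, Proposition~\ref{Prop 2.2}, and Theorem~\ref{Thm 2.6} with $Z=\{x\}$.
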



\begin{proof} This follows as for Corollary~\ref{Cor 2.8}, noting that $\{x\}$
is a zero set of $X_{\phi}$ for each $x\in X_{\phi}$ by
Lemma~\ref{Lemma 2.9}.
\end{proof}



\section{Pointwise spectral synthesis and P-points}


 Recall from Section 2 that if $A$ is a $C_0(X)$-algebra with base
map $\phi$ then $U_{\phi}=\{x\in X_{\phi}: J_x\not\supseteq
Z'(A)\}$, an open subset of $X_{\phi}$, $\partial U_{\phi}$ denotes
the boundary of $U_{\phi}$ in $X_{\phi}$ and
$W_{\phi}=X_{\phi}\setminus U_{\phi}$. Here $Z'(A)=\mu(C_0(X))\cap
A$. We saw in Proposition~\ref{Prop 1.7} that if $x\in U_{\phi}$ or
if $x$ is an isolated point of $X_{\phi}$ then $H_x$ is strictly
closed, while if $A/J_x$ is unital but $x\notin U_{\phi}$ then $H_x$
is not strictly closed. We remarked that the remaining points to
consider are those for which $A/J_x$ is non-unital and $x\in
W_{\phi}$ and is non-isolated in $X_{\phi}$. From Section 3 we now
have a complete characterization of global spectral synthesis for
$\sigma$-unital $A$, and a complete characterization of pointwise
spectral synthesis for separable $A$, but only a
near-characterization of pointwise spectral synthesis for the more
general case when $A$ is $\sigma$-unital.

In this section we approach the $\sigma$-unital case from another
angle. We characterize the points $x$ in the interior of $W_{\phi}$
in $X_{\phi}$ for which $H_x$ is closed (these turn out to be
precisely the P-points) and make partial progress for the difficult
case of points $x\in\partial U_{\phi}$ (cf. for example
\cite[Proposition 3.5]{Xalg}). We also give a complete
characterization of pointwise spectral synthesis for the important
case when the base map $\phi$ is open (Theorem~\ref{Thm 3.6}). We
begin with the following lemma.


\begin{lemma} \label{Lemma 3.1}  Let $A$ be a $\sigma$-unital
$C_0(X)$-algebra with base map $\phi$ and let $x\in W_{\phi}$. Let
$u\in A$ be strictly positive with $\Vert u\Vert\le 1$. Then there
is a net $(x_{\alpha})$ in $X_{\phi}$ with $x_{\alpha}\to x$ and
$\Vert (1-u)+\tilde J_{x_{\alpha}}\Vert\to 1$.
\end{lemma}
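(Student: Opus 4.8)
The plan is to use Lemma~\ref{Lemma 1.6} to convert the hypothesis $x\in W_{\phi}$ (i.e. $J_x\supseteq Z'(A)$) into the existence of a single ideal $R\in\Prim(M(A))$ lying over $x$ and containing $A$, and then to approximate $R$ by canonical images of primitive ideals of $A$, reading off the desired net from them. Concretely, Lemma~\ref{Lemma 1.6} supplies $R\in\Prim(M(A))$ with $R\supseteq A$ and $\overline\phi(R)=x$. Since $u\in A\subseteq R$, we have $(1-u)+R=1+R$, and because $R$ is proper this gives $\Vert(1-u)+R\Vert=\Vert 1+R\Vert=1$. This value $1$ is the anchor that I want to reproduce along the approximating net.

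Next I would produce the net. The canonical image $\{\tilde P:P\in\Prim(A)\}$ is dense in $\Prim(M(A))$, so there is a net $(P_{\alpha})$ in $\Prim(A)$ with $\tilde P_{\alpha}\to R$. Setting $x_{\alpha}=\phi(P_{\alpha})=\overline\phi(\tilde P_{\alpha})\in X_{\phi}$, the continuity of $\overline\phi$ (Proposition~\ref{Prop 1.2}) forces $x_{\alpha}\to\overline\phi(R)=x$, which is the first assertion. It then remains only to show $\Vert(1-u)+\tilde J_{x_{\alpha}}\Vert\to 1$.

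For the norm convergence I would squeeze $\Vert(1-u)+\tilde J_{x_{\alpha}}\Vert$ between two quantities both tending to $1$. For the lower bound, the function $Q\mapsto\Vert(1-u)+Q\Vert$ is lower semi-continuous on $\Prim(M(A))$ (this is exactly the openness of the sets $U_n$ used via \cite[3.3.2]{Dix} in the proof of Theorem~\ref{Thm 2.6}), so $\liminf_{\alpha}\Vert(1-u)+\tilde P_{\alpha}\Vert\ge\Vert(1-u)+R\Vert=1$. Moreover $\phi(P_{\alpha})=x_{\alpha}$ gives $J_{x_{\alpha}}\subseteq P_{\alpha}$, hence $\tilde J_{x_{\alpha}}\subseteq\tilde P_{\alpha}$ (from the definition of $\tilde{\,\cdot\,}$, or Proposition~\ref{Prop 1.1}(iv)), and the contractivity of the quotient map $M(A)/\tilde J_{x_{\alpha}}\to M(A)/\tilde P_{\alpha}$ yields $\Vert(1-u)+\tilde P_{\alpha}\Vert\le\Vert(1-u)+\tilde J_{x_{\alpha}}\Vert$. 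For the upper bound, $u$ strictly positive with $\Vert u\Vert\le 1$ gives $0\le u\le 1$ in $M(A)$, so $\Vert 1-u\Vert\le 1$ and therefore $\Vert(1-u)+\tilde J_{x_{\alpha}}\Vert\le 1$. Combining, $1\le\liminf_{\alpha}\Vert(1-u)+\tilde J_{x_{\alpha}}\Vert\le\limsup_{\alpha}\Vert(1-u)+\tilde J_{x_{\alpha}}\Vert\le 1$, so the limit is $1$.

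I do not expect a genuine obstacle here: the argument is essentially a lower-semicontinuity-plus-contractivity squeeze, and everything nontrivial is imported (the existence of a proper $R$ over $x$ containing $A$ from Lemma~\ref{Lemma 1.6}, density of $\tilde{\Prim(A)}$, and lower semi-continuity of the norm functions). The only point requiring care is keeping the two estimates pointed in the right directions, namely that lower semi-continuity pushes the $\tilde P_{\alpha}$-norms \emph{up} toward $1$ from below while the inclusion $\tilde J_{x_{\alpha}}\subseteq\tilde P_{\alpha}$ together with $\Vert 1-u\Vert\le 1$ traps the $\tilde J_{x_{\alpha}}$-norms \emph{from above}.
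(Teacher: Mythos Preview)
Your proposal is correct and follows essentially the same route as the paper's proof: invoke Lemma~\ref{Lemma 1.6} to obtain $R\in\Prim(M(A))$ with $R\supseteq A$ and $\overline\phi(R)=x$, approximate $R$ by a net $(\tilde P_{\alpha})$ with $P_{\alpha}\in\Prim(A)$, set $x_{\alpha}=\phi(P_{\alpha})$, and then squeeze using lower semi-continuity of $Q\mapsto\Vert(1-u)+Q\Vert$, the inclusion $\tilde J_{x_{\alpha}}\subseteq\tilde P_{\alpha}$, and $\Vert 1-u\Vert\le 1$. Your write-up is in fact slightly more explicit than the paper's (you spell out why $\Vert(1-u)+R\Vert=1$ and justify each inequality in the chain), but there is no substantive difference in strategy.
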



\begin{proof} By Lemma~\ref{Lemma 1.6} there exists $R\in \Prim(M(A)/A)$ with
$R\supseteq H_x$. Let $(P_{\alpha})$ be a net in ${\rm Prim}(A)$
such that $\tilde P_{\alpha}\to R$. Then $\overline\phi(\tilde
P_{\alpha}) \to\overline\phi(R)=x$. Hence
$x_{\alpha}:=\phi(P_{\alpha})\to x$. On the other hand, since
$R\supseteq A$,
\begin{align*}1=\Vert (1-u)+R\Vert
&\le\lim\inf\Vert (1-u)+\tilde P_{\alpha}\Vert \\
&\le\lim\inf\Vert (1-u)+\tilde J_{x_{\alpha}}\Vert\\
&\le\lim\sup\Vert (1-u)+\tilde J_{x_{\alpha}}\Vert\le 1.
\end{align*}
 Hence $\Vert (1-u)+\tilde J_{x_{\alpha}}\Vert\to 1$.
 \end{proof}


 Next we recall the definition of a P-point. Let $X$ be a
completely regular (Hausdorff) space. A point $x\in X$ is a {\sl
P-point} if every continuous real-valued function vanishing at $x$
vanishes in a neighbourhood of $x$ \cite[4L]{GJ}. Equivalently, $x$
is a P-point if $x$ does not lie in the boundary of any cozero set.
If the space $X$ is perfectly normal then every singleton is a zero
set and so a P-point is necessarily an isolated point. A space in
which every point is a P-point is a {\sl P-space}.

We are now ready for the first main result of this section.


\begin{thm} \label{Thm 3.2}  Let $A$ be a $\sigma$-unital
$C_0(X)$-algebra with base map $\phi$. If $x\in X_{\phi}$ is a
P-point in $X_{\phi}$ then $H_x$ is strictly closed. Conversely, if
$x\in W_{\phi}$ and $H_x$ is strictly closed then $x$ is a P-point
in $W_{\phi}$.
\end{thm}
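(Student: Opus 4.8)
The plan is to prove the two implications separately, reducing each to the machinery already assembled in Sections~2 and~3.

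For the forward implication, suppose $x$ is a P-point in $X_{\phi}$. If $x\in U_{\phi}$ then $H_x$ is strictly closed by Proposition~\ref{Prop 1.7}(i), so I may assume $x\in W_{\phi}$, and I would deduce strict closure from Proposition~\ref{Prop 2.4} by verifying its two hypotheses, that $\phi$ is locally closed at $x$ and that $J_x$ is locally modular. For local closedness, let $Y$ be a closed subset of $\Prim(A)$ with $x\in\operatorname{cl}(\phi(Y))$. Choosing a strictly positive $u\in A$ and putting $Y_n=Y\cap\{P:\Vert u+P\Vert\ge 1/n\}$, each $Y_n$ is compact, so each $\phi(Y_n)$ is closed in $X_{\phi}$ and $\phi(Y)=\bigcup_n\phi(Y_n)$ is an $F_{\sigma}$. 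If $x\notin\phi(Y)$ I would choose, by complete regularity, functions $g_n\in C^b(X_{\phi})$ with $0\le g_n\le 1$, $g_n(x)=0$ and $g_n(\phi(Y_n))=\{1\}$, and form $g=\sum_n 2^{-n}g_n$; this is continuous with $g(x)=0$ yet $g>0$ on $\phi(Y)$, so $g$ vanishes at $x$ without vanishing on any neighbourhood of $x\in\operatorname{cl}(\phi(Y))$, contradicting the P-point hypothesis. Hence $x\in\phi(Y)$, so $\phi$ is locally closed at $x$.

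The harder half of the forward implication is local modularity, and this is where I expect the main obstacle. Using Lemma~\ref{Lemma 2.3}, I would assume for a contradiction that $\tilde P\sim_x R$ for some $P\in\partial H(x)$ and some $R\in\Prim(M(A)/A)$, giving a net $(P_{\alpha})$ in $\Prim(A)\setminus U(x)$ with $\tilde P_{\alpha}\to\tilde P$ and $\tilde P_{\alpha}\to R$; continuity of $\overline\phi$ then yields $\phi(P_{\alpha})\to x$ and $\overline\phi(R)=x$, and since $R\supseteq A$ one gets $\min\operatorname{sp}(u+\tilde P_{\alpha})\to 0$. The idea is to run the inductive construction from the proof of Theorem~\ref{Thm 2.6}, selecting points $Q_n\in\Prim(A)\setminus H(x)$ inside shrinking basic neighbourhoods determined by $R$ and by auxiliary functions $f_n$, so as to manufacture a continuous $f=\sum_n f_n$ on $X_{\phi}$ with $f(x)=0$, $f(\phi(Q_n))>0$ and $\phi(Q_n)\to x$. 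Such an $f$ again contradicts the P-point property. The delicate point is exactly that the construction of Theorem~\ref{Thm 2.6} only controls the limit of the $\phi(Q_n)$ up to a nearby point of a prescribed zero set; the P-point hypothesis, which makes every zero-set neighbourhood of $x$ an actual neighbourhood, is what should let us anchor the limit at $x$ itself. Granting local modularity, Proposition~\ref{Prop 2.4} gives that $H_x$ is strictly closed.

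For the converse, assume $x\in W_{\phi}$ and $H_x$ is strictly closed. First $A/J_x$ must be non-unital, for otherwise Proposition~\ref{Prop 1.7}(iii) would contradict strict closure; in particular $\min\operatorname{sp}(u+J_x)=0$ for a strictly positive $u$ with $\Vert u\Vert=1$. Suppose, for a contradiction, that $x$ is not a P-point in $W_{\phi}$: then there is a continuous $h:W_{\phi}\to[0,1]$ with $h(x)=0$ and $x\in\operatorname{cl}(\coz h)$, closure taken in $W_{\phi}$. Since $A$ is $\sigma$-unital, $X_{\phi}$ is normal and $W_{\phi}$ is closed, so $h$ extends to $f\in C^b(X_{\phi})$ with $0\le f\le 1$ and $f(x)=0$; set $U=\coz f$, so that $x\notin U$. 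With $V=\{y\in U:2\min\operatorname{sp}(u+J_y)\le f(y)\}$ as in Theorem~\ref{Thm 2.5}, I would establish $x\in\operatorname{cl}(V)$, whence $x\in\operatorname{cl}(V)\setminus U$ and Theorem~\ref{Thm 2.5}(v) forces $H_x$ to fail to be strictly closed, the desired contradiction. The crux here is the claim $x\in\operatorname{cl}(V)$: one has a net $w_{\beta}\to x$ with $w_{\beta}\in W_{\phi}\cap\coz h$, so $f(w_{\beta})=h(w_{\beta})>0$ and each $w_{\beta}\in U$, but membership in $V$ additionally requires $2\min\operatorname{sp}(u+J_{w_{\beta}})\le f(w_{\beta})$, which is automatic only when $A/J_{w_{\beta}}$ is non-unital. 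To force points into $V$ I would invoke Lemma~\ref{Lemma 3.1}, which near any point of $W_{\phi}$ produces approximating points at which $\min\operatorname{sp}(u+\tilde J)$ is arbitrarily small, and combine it with the net $(w_{\beta})$ in a diagonal argument to obtain points of $V$ converging to $x$. Matching the smallness of $\min\operatorname{sp}(u+J_y)$ against the positive-but-small values $f(y)$ along this combined net is the technical heart of the converse and the step I expect to need the most care.
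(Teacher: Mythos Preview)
Your converse argument is essentially the paper's own: extend the witness function from $W_{\phi}$ to $X_{\phi}$, use Lemma~\ref{Lemma 3.1} near each point of $W_{\phi}\cap\coz(\overline f)$ to show that such points lie in $\operatorname{cl}(V)$, and then invoke Theorem~\ref{Thm 2.5}(v). That part is fine.

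The forward implication, however, cannot be proved along the lines you propose. You aim to verify the hypotheses of Proposition~\ref{Prop 2.4}, in particular that $J_x$ is locally modular whenever $x$ is a P-point in $X_{\phi}$. But this is false: Example~\ref{Ex 3.5} exhibits a $\sigma$-unital $C_0(X)$-algebra (namely $A=C(X)\otimes K(H)$ with $X=\omega_1+1$ and $x=\omega_1$) in which $x$ is a non-isolated P-point, yet $J_x$ is \emph{not} locally modular, because $\partial H(x)=\{x\}$ is non-empty while every primitive quotient $A/P\cong K(H)$ is non-unital. Concretely, your attempt to adapt the construction of Theorem~\ref{Thm 2.6} requires extracting a \emph{sequence} $(Q_n)$ with $\phi(Q_n)\to x$ and $\phi(Q_n)\neq x$, in order to build a continuous $f$ with $f(x)=0$ and $x$ in the closure of $\coz f$. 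At a genuine non-$G_{\delta}$ P-point such as $\omega_1\in\omega_1+1$, no countable set of points distinct from $x$ accumulates at $x$, so this extraction is impossible; the witnessing net for $\tilde P\sim_x R$ is irreducibly uncountable.

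The paper therefore proceeds quite differently for the forward direction: it argues the contrapositive without passing through local modularity at all. Assuming $H_x$ is not strictly closed, one picks $b\in\tilde J_x\setminus H_x$ with $\Vert b+H_x\Vert=1$ and a strictly positive $u$, and sets $W_n=\{y\in X_{\phi}:\Vert bu+\tilde J_y\Vert\ge 1/n\}$. Each $W_n$ is the $\phi$-image of a compact subset of $\Prim(A)$, hence compact in $X_{\phi}$, and $x\notin W_n$; but \cite[Lemma 4.2]{multid} forces $x$ into the closure of $\bigcup_n W_n$. From this one builds $f=\sum_n 2^{-n}f_n$ with $f_n(x)=0$, $f_n(W_n)=\{1\}$, witnessing that $x$ is not a P-point. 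This is structurally the same $F_{\sigma}$-plus-separating-functions trick you used for local closedness, but applied directly to the failure of strict closure rather than to local modularity.
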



\begin{proof}  Let $x\in X_{\phi}$ and suppose that $H_x$ is not
strictly closed. We show that $x$ is not a P-point in $X_{\phi}$. By
\cite[Theorem 10.1.7]{KR}, there exists $b\in \tilde J_x\setminus
H_x$ with $\Vert b\Vert=\Vert b+H_x\Vert=1$.  Let $u$ be a strictly
positive element of $A$ with $\Vert u\Vert=1$, and recall that for
$y\in X_{\phi}$, $b\in \tilde J_y$ if and only if $bu\in\tilde J_y$
(cf. \cite[Section 2]{Xalg}). For each $n\ge 1$, set $W_n=\{ y\in
X_{\phi}: \Vert bu+\tilde J_y\Vert\ge 1/n\}$. By \cite[Lemma
4.2]{multid}, for every neighbourhood $W$ of $x$ in ${\rm cl}_{\beta
X}X_{\phi}$,
$$ ||b+H_x|| \leq \sup\{ ||b+\tilde{J_y}||: y\in X_{\phi}\cap W\}.$$
Hence for every neighbourhood $V$ of $x$ in $X_{\phi}$ there exists
$y\in V$ such that $bu\notin \tilde J_y$. Thus $x$ lies in the
closure of $\bigcup_{n=1}^{\infty} W_n$. Since $$ W_n = \{y\in
X_{\phi}: ||bu+J_y|| \geq 1/n\} =\phi(\{P\in {\rm Prim}(A): ||bu+P||
\geq 1/n\}),$$ $W_n$ is a compact subset of $X_{\phi}$. But $x\notin
W_n$ and hence there exists a continuous function $f_n: X_{\phi}\to
[0,1]$ such that $f_n(x)=0$ and $f_n(W_n)=\{ 1\}$. Set
$f=\sum_{n=1}^{\infty} f_n/2^n$. Then $f(x)=0$, but $x$ lies in the
closure of the cozero set of $f$. Hence $x$ is not a P-point.

Conversely, suppose that $x\in W_{\phi}$ is not a P-point in
$W_{\phi}$. Let $f$ be a continuous function on $W_{\phi}$ with
$f(x)=0$ such that $x$ lies in the closure of the cozero set of $f$.
Replacing $f$ by $\min\{ |f|, 1\}$, we may assume that $0\le f\le
1$. Since $X_{\phi}$ is normal and $W_{\phi}$ is closed in
$X_{\phi}$, we may extend $f$ to a continuous function $\overline f$
on $X_{\phi}$ with $0\le \overline f\le 1$. Let $y\in W_{\phi}\cap
{\rm coz}(\overline f)$.  By Lemma~\ref{Lemma 3.1} there is a net
$(y_{\alpha})$ in $X_{\phi}$ with $y_{\alpha}\to y$ and $\Vert
(1-u)+\tilde J_{y_{\alpha}}\Vert\to 1$. Hence eventually $2\min {\rm
sp}(u +J_{y_{\alpha}})=2 (1-\Vert (1-u)+\tilde J_{y_{\alpha}}\Vert)
\le \overline f(y_{\alpha})$, since $\overline f(y_{\alpha})\to
\overline f(y)>0$. It follows that the set $V$ of Theorem~\ref{Thm
2.5} (associated with the cozero set ${\rm coz}(\overline f) $) has
closure ${\rm cl}(V)$ containing $W_{\phi}\cap {\rm coz}(\overline
f)$. Hence $x\in {\rm cl}(V)$ since $x$ lies in the the closure of
$W_{\phi}\cap {\rm coz}(\overline f)={\rm coz}(f)$. Thus $\tilde J_x
\neq H_x$ by Theorem~\ref{Thm 2.5}(v).
\end{proof}


\noindent Theorem~\ref{Thm 3.2}
 has some useful consequences. Let
${\rm Int}\ W_{\phi}$ denote the interior of $W_{\phi}$ relative to
$X_{\phi}$.


\begin{cor} \label{Cor 3.3}  Let $A$ be a
 $C_0(X)$-algebra with base map $\phi$ and let
$x\in {\rm Int}\ W_{\phi}$. If $A$ is $\sigma$-unital then $H_x$ is
strictly closed if and only if $x$ is a P-point in $X_{\phi}$. If
$A$ is separable then $H_x$ is strictly closed if and only if $x$ is
an isolated point in $X_{\phi}$.
\end{cor}


\begin{proof} Suppose that $A$ is $\sigma$-unital and that $x$ is not
a P-point in $X_{\phi}$. Then there exists $f\in C^b(X_{\phi})$ such
that $f(x)=0$ and $x$ lies in the closure of $\coz(f)$. Let
$(x_{\alpha})$ be a net in $\coz(f)$ such that $x_{\alpha}\to x$,
and set $\overline f=f|_{W_{\phi}}$. Then $\overline f$ is
continuous and $\overline f(x)=0$ but eventually $x_{\alpha}\in {\rm
Int}\ W_{\phi}$ and so $x$ lies in the closure of the cozero set of
$\overline f$. Thus $x$ is not a P-point in $W_{\phi}$ and so $H_x$
is not strictly closed by Theorem~\ref{Thm 3.2}. If $A$ is separable
then $\{x\}$ is a zero set in $X_{\phi}$ by Lemma~\ref{Lemma 2.9}
and hence $x$ is P-point in $X_{\phi}$ if and only if it is isolated
in $X_{\phi}$.
\end{proof}


\begin{cor} \label{Cor 3.4}  Let $A$ be a $C_0(X)$-algebra with
base map $\phi$ and suppose that $Z'(A)=\{0\}$.

{\rm (i)} Let $x\in X_{\phi}$. If $A$ is $\sigma$-unital then $H_x$
is strictly closed if and only if $x$ is a P-point in $X_{\phi}$. If
$A$ is separable then $H_x$ is strictly closed if and only if $x$ is
an isolated point in $X_{\phi}$.

{\rm (ii)} If $A$ is $\sigma$-unital then $H_x$ is strictly closed
for all $x\in X_{\phi}$ if and only if $X_{\phi}$ is discrete.
\end{cor}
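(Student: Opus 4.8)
The plan is to reduce both parts to the P-point characterization already supplied by Corollary~\ref{Cor 3.3}, so that the only genuinely new ingredient is a purely topological fact needed for part (ii). The key observation for part (i) is that the hypothesis $Z'(A)=\{0\}$ trivializes the set $U_{\phi}$: since every ideal contains $0$, we have $J_x\supseteq Z'(A)$ for all $x\in X_{\phi}$, whence $U_{\phi}=\{x\in X_{\phi}:J_x\not\supseteq Z'(A)\}=\emptyset$ and therefore $W_{\phi}=X_{\phi}$. In particular ${\rm Int}\ W_{\phi}=X_{\phi}$, so every point of $X_{\phi}$ lies in ${\rm Int}\ W_{\phi}$ and both statements of part (i) follow immediately by applying Corollary~\ref{Cor 3.3} at each $x\in X_{\phi}$ (recalling that separable $C^*$-algebras are $\sigma$-unital).

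For part (ii), the easy implication is that if $X_{\phi}$ is discrete then each $x$ is isolated, hence a P-point, so $H_x$ is strictly closed for all $x$ by part (i) (or directly by Proposition~\ref{Prop 1.7}(ii)). For the converse, assume $H_x$ is strictly closed for all $x\in X_{\phi}$. By part (i) every point of $X_{\phi}$ is then a P-point, i.e.\ $X_{\phi}$ is a P-space, and since $A$ is $\sigma$-unital, $X_{\phi}$ is $\sigma$-compact. Thus part (ii) comes down to the topological assertion that a $\sigma$-compact P-space is discrete.

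I would prove this in three steps. First, countable subsets of a P-space are closed and discrete: given a countable set $C=\{x_n\}$ and a point $p\notin C$, use complete regularity to pick continuous $f_n\geq 0$ with $f_n(p)=0$ and $f_n(x_n)=1$, and set $f=\sum_n 2^{-n}\min\{f_n,1\}$; then $f(p)=0$ and $\{f=0\}=\bigcap_n\{f_n=0\}$ misses every $x_n$, so the P-point property of $p$ yields a neighbourhood of $p$ contained in $\{f=0\}$ and hence disjoint from $C$, giving $p\notin\overline{C}$, and applying the same to $C\setminus\{x_n\}$ shows $C$ is discrete. Second, compact subsets of a P-space are finite: an infinite compact set would contain a countably infinite subset which, by the first step, is closed and discrete, hence a compact infinite discrete space—impossible. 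Third, writing $X_{\phi}=\bigcup_n K_n$ with each $K_n$ compact, the second step makes $X_{\phi}$ countable, and then the first step shows every subset of $X_{\phi}$ is closed, so every subset is clopen and $X_{\phi}$ is discrete.

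The main obstacle is the topological lemma, and within it the first step is the crux: one must extract from the bare P-point definition (a continuous function vanishing at $x$ vanishes near $x$) the conclusion that a countable intersection of zero-set neighbourhoods of $p$ is again a neighbourhood of $p$, which is precisely what the single summed function $f$ accomplishes. Everything else is a formal consequence of Corollary~\ref{Cor 3.3} together with the elementary passage from ``compact sets finite'' and ``countable sets closed'' to discreteness.
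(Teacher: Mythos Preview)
Your proof is correct and follows exactly the paper's approach: reduce to Corollary~\ref{Cor 3.3} via $U_{\phi}=\emptyset$, then invoke the fact that a $\sigma$-compact P-space is discrete. The only difference is that the paper cites this topological fact from \cite[Lemma 4.4]{multid}, whereas you supply a self-contained proof; your three-step argument (countable sets are closed discrete, compact sets are finite, hence the space is countable and then discrete) is a standard and valid one.
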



\begin{proof} Since $Z'(A)=\{0\}$,  $U_{\phi}=\emptyset$ and
$W_{\phi}= X_{\phi}$. Thus part (i) follows from Corollary~\ref{Cor
3.3}. Part (ii) now follows because the space $X_{\phi}$ is
$\sigma$-compact, and a $\sigma$-compact P-space is discrete (see, for example, the proof of  \cite[Lemma 4.4]{multid}).
\end{proof}


\noindent If $A$ is a stable $C_0(X)$-algebra then $Z(A)=\{0\}$ and
hence $Z'(A) = \{0\}$. Thus Corollary~\ref{Cor 3.4} extends
\cite[Theorem 4.5]{multid}.

It follows from Theorem~\ref{Thm 3.2} and Proposition~\ref{Prop 2.2}
that if $A$ is a $\sigma$-unital $C_0(X)$-algebra and $x\in
X_{\phi}$ is a P-point in $X_{\phi}$ then $\phi$ is locally closed
at $x$ and that property (iii) of Proposition~\ref{Prop 2.2} holds.
On the other hand, $J_x$ need not be locally modular as the
following example shows.


\begin{example} \label{Ex 3.5} {\rm Let $X$ be a compact Hausdorff space with a
non-isolated P-point $x$ (e.g. take $X$ to be $\omega_1+1$, where
$\omega_1$ is the first uncountable ordinal, with the usual
topology) and let $A=C(X)\otimes K(H)$ (where $K(H)$ is the algebra
of compact operators on a separable, infinite-dimensional Hilbert
space $H$). There is a homeomorphism $\phi:\Prim(A)\to X$ such that
$$\phi(\{f\in C_0(X):f(y)=0\}\otimes K(H)) = y \qquad (y\in X).$$
Then $J_x$ is not locally modular because $\partial H(x)$ is
non-empty but $A/P$ is non-unital for all $P\in\Prim(A)$.
Nevertheless, $H_x$ is strictly closed by Theorem~\ref{Thm 3.2}.}
\end{example}


 In Theorem~\ref{Thm 3.2} we saw a characterization, for $A$ $\sigma$-unital,
of when $H_x$ is strictly closed for $x\in {\rm Int}\ W_{\phi}$ and
we know that $H_x$ is always strictly closed if $x\in U_{\phi}$
(Proposition~\ref{Prop 1.7}(i)). The remaining points to consider
are those in $\partial U_{\phi}$. For general $\sigma$-unital
$C_0(X)$-algebras we are not able to characterize the points
$x\in\partial U_{\phi}$ for which $H_x$ is strictly closed (though
we have seen a necessary condition in Theorem~\ref{Thm 3.2}), but if
we make the further assumption that the base map $\phi$ is open then
we can show that there are no such points.


\begin{thm} \label{Thm 3.6}  Let $A$ be a continuous,
$\sigma$-unital $C_0(X)$-algebra with base map $\phi$ and let $x\in
X_{\phi}$. Then the following are equivalent:

{\rm (i)} $H_x$ is strictly closed;

{\rm (ii)} either $x\in U_{\phi}$, or $x\in {\rm Int}\ W_{\phi}$ and
$x$ is a P-point in $X_{\phi}$.
\end{thm}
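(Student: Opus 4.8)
The plan is to establish Theorem~\ref{Thm 3.6} by showing the two implications separately, drawing heavily on the machinery already assembled. First I would prove (ii)$\Rightarrow$(i). This direction splits into two cases matching the disjunction in (ii). If $x\in U_{\phi}$, then $J_x\not\supseteq Z'(A)$ and $H_x$ is strictly closed immediately by Proposition~\ref{Prop 1.7}(i); no continuity or $\sigma$-unitality is needed here. If instead $x\in \interior W_{\phi}$ and $x$ is a P-point in $X_{\phi}$, then $H_x$ is strictly closed by the forward direction of Theorem~\ref{Thm 3.2} (a P-point in $X_{\phi}$ always gives strict closure). So this half of the theorem follows quickly by assembling earlier results and does not even require the hypothesis that $\phi$ is open.

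The substantive direction is (i)$\Rightarrow$(ii), and this is where the openness of $\phi$ (equivalently, the continuity of $A$ as a $C_0(X)$-algebra, by Lee's theorem) must be used. I would argue by contraposition: assume $H_x$ is strictly closed but that (ii) fails, and derive a contradiction. Since (ii) fails and $X_{\phi}=U_{\phi}\sqcup W_{\phi}$, we have $x\in W_{\phi}$; and since $x\notin \interior W_{\phi}$ is the only remaining way for (ii) to fail once $x\in W_{\phi}$ (the P-point clause being handled by Theorem~\ref{Thm 3.2}), the interesting situation is $x\in \partial U_{\phi}$, i.e. $x$ lies in $W_{\phi}$ but in the closure of $U_{\phi}$. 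The goal is then to show that such a boundary point cannot have $H_x$ strictly closed when $\phi$ is open. Concretely, I expect to choose a net $(x_{\alpha})$ in $U_{\phi}$ converging to $x$, and to exploit openness of $\phi$ together with the behaviour of a strictly positive element $u\in A$ near the fibre over $x$: on $U_{\phi}$ one has $z+J_{x_\alpha}=1_{A/J_{x_\alpha}}$ for a suitable central $z\in Z'(A)$, yet at $x$ itself $A/J_x$ is non-unital (as $x\in W_{\phi}$), so $0\in{\rm sp}(u+J_x)$ and $\min{\rm sp}(u+J_{x_\alpha})$ must return toward $0$ along a suitable subnet.

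The mechanism for converting this into failure of strict closure is Theorem~\ref{Thm 2.5}, specifically part (v): if I can produce a continuous $f\in C^b(X_\phi)$ with $0\le f\le 1$ whose cozero set $U$ and associated set $V=\{y\in U: 2\min{\rm sp}(u+J_y)\le f(y)\}$ have $x\in {\rm cl}(V)\setminus U$, then $H_x$ is not strictly closed, contradicting (i). So the heart of the argument is a careful construction of $f$ adapted to the point $x\in\partial U_{\phi}$: I want $f(x)=0$ (forcing $x\notin U$) while arranging, using openness of $\phi$ to transfer the upper semicontinuity/continuity of the norm functions $y\mapsto \Vert a+J_y\Vert$ into genuine continuity, that points $y$ approaching $x$ from $U_{\phi}$ land in $V$, so that $x\in{\rm cl}(V)$. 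The continuity of the $C_0(X)$-algebra is exactly what lets me control $\min{\rm sp}(u+J_y)$ continuously in $y$ and thereby guarantee the inequality $2\min{\rm sp}(u+J_y)\le f(y)$ on an approaching sequence.

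The main obstacle I anticipate is the precise construction of the test function $f$ and the verification that $x\in{\rm cl}(V)\setminus U$ rather than merely $x\in{\rm cl}(U)\setminus U$; getting $x$ into the closure of $V$ (not just $U$) requires that the ``non-unital defect'' measured by $\min{\rm sp}(u+J_y)$ be genuinely detected along the approaching net, and this is where I must lean on continuity of the norm function rather than mere upper semicontinuity. A secondary subtlety is confirming that a point of $\partial U_{\phi}$ is automatically excluded from being a P-point in the relevant sense, or else handling the P-point clause in parallel; I would reconcile this with Theorem~\ref{Thm 3.2} by noting that if $x\in\partial U_{\phi}$ were a P-point in $X_{\phi}$ it would force $U_{\phi}$ to already contain a neighbourhood of $x$, pushing $x$ into $U_\phi$ or its interior and contradicting $x\in\partial U_\phi\subseteq W_\phi$. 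Once the function $f$ is built and Theorem~\ref{Thm 2.5}(v) applied, the contradiction with strict closure completes the contrapositive and hence (i)$\Rightarrow$(ii).
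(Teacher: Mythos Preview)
Your overall strategy matches the paper's proof closely: the easy direction assembles Proposition~\ref{Prop 1.7}(i) and Theorem~\ref{Thm 3.2}, and for the hard direction you correctly isolate $x\in\partial U_{\phi}$ as the crucial case and plan to invoke Theorem~\ref{Thm 2.5}(v) after constructing a suitable $f\in C^b(X_{\phi})$, using openness of $\phi$ to control the level sets of $\min\mathrm{sp}(u+J_y)$.

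There is, however, one genuine gap. You assert that ``at $x$ itself $A/J_x$ is non-unital (as $x\in W_{\phi}$)'', but this inference is wrong: $x\in W_{\phi}$ means only $J_x\supseteq Z'(A)$, and that does \emph{not} force $A/J_x$ to be non-unital. The paper therefore splits the case $x\in\partial U_{\phi}$ in two. If $A/J_x$ is unital, Proposition~\ref{Prop 1.7}(iii) immediately gives that $H_x$ is not strictly closed. Only when $A/J_x$ is non-unital do you get $0\in\mathrm{sp}(u+J_x)$, which is what your Theorem~\ref{Thm 2.5}(v) argument needs in order to arrange $x\in\mathrm{cl}(V)$. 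Without this split, your construction has no starting point in the unital case.

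A minor remark: your closing attempt to rule out P-points on $\partial U_{\phi}$ is unnecessary and the reasoning there is not sound (a P-point can lie on the boundary of an open set that is not a cozero set). The paper simply shows directly that \emph{every} $x\in\partial U_{\phi}$ fails strict closure, P-point or not; no separate reconciliation with Theorem~\ref{Thm 3.2} is required. Also, the precise use of openness in the paper is to make the sets $X_n=\{y\in X_{\phi}:\Vert(1-u)+\tilde J_y\Vert\le 1-1/2^{n+1}\}$ closed (equivalently, to make the complementary $\phi(V_n)$ open), after which $f=\sum_n f_n$ is built from Urysohn-type functions $f_n$ vanishing at $x$ and equal to $1/2^n$ on $X_n$; your description of the mechanism is compatible with this but leaves the actual construction to be filled in.
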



\begin{proof} (ii)$\Rightarrow$(i) If $x\in U_{\phi}$, or if $x\in
{\rm Int}\ W_{\phi}$ with $x$ a P-point in $X_{\phi}$, then $H_x$ is
strictly closed by Proposition~\ref{Prop 1.7}(i) and
Corollary~\ref{Cor 3.3}.

(i)$\Rightarrow$(ii) Corollary~\ref{Cor 3.3} shows that if $x\in
{\rm Int}\ W_{\phi}$ with $H_x$ strictly closed then $x$ must be a
P-point in $X_{\phi}$. It is enough, therefore, to show that if
$x\in\partial U_{\phi}$ then $H_x$ is not strictly closed. If
$A/J_x$ is unital then this follows from Proposition~\ref{Prop
1.7}(iii). Hence we may assume that $A/J_x$ is non-unital. Let $u$
be a strictly positive element of $A$ with $\Vert u\Vert=1$. Then
$\Vert (1-u)+\tilde J_x\Vert=1$. For each $n\ge 1$, there exists
$P_n\in {\rm Prim}(A/J_x)$ such that $\Vert (1-u)+\tilde
P_n\Vert>1-1/2^{n+1}$. Hence the set
$$V_n=\{Q\in {\rm Prim}(A): \Vert (1-u)+\tilde
Q\Vert>1-1/2^{n+1}\}$$ is an open neighbourhood of $P_n$ and so,
since $\phi$ is open, the set $\phi(V_n)$ is an open neighbourhood
of $\phi(P_n)=x$ in $X_{\phi}$. Thus $$X_n:=X_{\phi}\setminus
\phi(V_n)=\{ y\in X_{\phi}: \Vert (1-u)+\tilde J_y\Vert\le
1-1/2^{n+1}\}$$ is closed in $X_{\phi}$ and $x\notin X_n$. If $y\in
U_{\phi}$ then $A/J_y$ is unital \cite[Proposition 2.2]{Xalg} and
therefore $\Vert(1-u)+\tilde{J_y}\Vert <1$. Hence
$\bigcup_{n=1}^{\infty}X_n\supseteq U_{\phi}$ and it follows that
$x$ lies in the closure of $\left(\bigcup_{n=2}^{\infty}
X_n\right)\setminus X_1$.

Since $x\notin X_n$ there exists $f_n\in C^b(X_{\phi})$ with $0\le
f_n\le 1/2^n$ such that $f_n(x)=0$ and $f_n(X_n)= \{1/2^n\}$. Set
$f=\sum_{n=1}^{\infty} f_n$. Then $f\in C^b(X_{\phi})$ with $0\le
f\le 1$ and $f(x)=0$. Let $W$ be the cozero set of $f$ and let $V=\{
y\in W: 2\min {\rm sp}(u+{J_y})\le f(y)\}$. Suppose that $y\in
X_{\phi}$ with $y\in X_{n+1}\setminus X_n$ for some $n\ge 1$. Then
$\Vert (1-u)+\tilde J_y\Vert > 1-1/2^{n+1}$ and so $\min {\rm
sp}(u+{J_y})< 1/2^{n+1}$. Since $y\in X_m$ for all $m\geq n+1$,
$f(y)\geq 1/2^n$ and so $f(y)>2\min {\rm sp}(u+{J_y})$. Hence $y\in V$,
and thus $V\supseteq \left(\bigcup_{n=2}^{\infty}
X_n\right)\setminus X_1$. Hence $x\in{\rm cl}(V)\setminus W$ and so
it follows from Theorem~\ref{Thm 2.5}(v) that $\tilde J_x\ne H_x$,
as required.
\end{proof}


\begin{cor} \label{Cor 3.7} Let $A$ be a continuous,
$\sigma$-unital $C_0(X)$-algebra with base map $\phi$. Then the
following are equivalent:

{\rm (i)} for all $x\in X_{\phi}$, $H_x$ is strictly closed;

{\rm (ii)} $U_{\phi}$ and $W_{\phi}$ are clopen in $X_{\phi}$, and
$W_{\phi}$ is discrete.
\end{cor}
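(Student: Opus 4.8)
The plan is to apply Theorem~\ref{Thm 3.6} pointwise and then repackage the outcome as a global statement about $U_{\phi}$ and $W_{\phi}$. Since $U_{\phi}$ is open in $X_{\phi}$ and $W_{\phi}=X_{\phi}\setminus U_{\phi}$ is closed, I would partition $X_{\phi}$ into the three pieces $U_{\phi}$, ${\rm Int}\,W_{\phi}$, and $\partial U_{\phi}$, using that the boundary of a set equals the boundary of its complement, so that $\partial U_{\phi}=\partial W_{\phi}=W_{\phi}\setminus{\rm Int}\,W_{\phi}$. By Theorem~\ref{Thm 3.6}, for $x\in X_{\phi}$ the ideal $H_x$ is strictly closed exactly when $x\in U_{\phi}$, or $x\in{\rm Int}\,W_{\phi}$ and $x$ is a P-point of $X_{\phi}$; in particular $H_x$ is never strictly closed when $x\in\partial U_{\phi}$.

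For (i)$\Rightarrow$(ii), suppose $H_x$ is strictly closed for all $x\in X_{\phi}$. First, no point can lie in $\partial U_{\phi}$, by the last remark of the previous paragraph, so $\partial U_{\phi}=\emptyset$. This says precisely that the open set $U_{\phi}$ is also closed, hence clopen; consequently $W_{\phi}$ is clopen and ${\rm Int}\,W_{\phi}=W_{\phi}$. Next, every $x\in W_{\phi}={\rm Int}\,W_{\phi}$ must be a P-point of $X_{\phi}$, again by Theorem~\ref{Thm 3.6} (equivalently Corollary~\ref{Cor 3.3}). I would then record the elementary fact that, because $W_{\phi}$ is clopen, a point of $W_{\phi}$ is a P-point of $X_{\phi}$ if and only if it is a P-point of the subspace $W_{\phi}$: in one direction extend a continuous function on $W_{\phi}$ by zero across the clopen boundary to get a continuous function on $X_{\phi}$, and in the other direction restrict and use that $W_{\phi}$ is open. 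Hence $W_{\phi}$ is a P-space. Since $A$ is $\sigma$-unital, $X_{\phi}$ is $\sigma$-compact, so the closed subspace $W_{\phi}$ is $\sigma$-compact; a $\sigma$-compact P-space is discrete (as in the proof of Corollary~\ref{Cor 3.4}, cf. \cite[Lemma 4.4]{multid}), which gives (ii).

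For (ii)$\Rightarrow$(i), assume $U_{\phi}$ and $W_{\phi}$ are clopen with $W_{\phi}$ discrete, and check each $x\in X_{\phi}$. If $x\in U_{\phi}$ then $H_x$ is strictly closed by Proposition~\ref{Prop 1.7}(i). If $x\in W_{\phi}$, then since $W_{\phi}$ is clopen we have ${\rm Int}\,W_{\phi}=W_{\phi}$, and $x$ is isolated in $W_{\phi}$ (discreteness), hence a P-point of $W_{\phi}$, hence a P-point of $X_{\phi}$ by the clopen transfer above; so $H_x$ is strictly closed by Theorem~\ref{Thm 3.2} (or Theorem~\ref{Thm 3.6}). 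As $\partial U_{\phi}=\emptyset$ there are no remaining points, so (i) holds.

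The substantive content is already carried by Theorem~\ref{Thm 3.6}, so I do not expect a serious obstacle here; the genuinely new ingredients are only the clopen transfer of the P-point property and the $\sigma$-compact-P-space-is-discrete step, both routine. The one point to handle carefully is the bookkeeping that identifies $\partial U_{\phi}=\emptyset$ with the simultaneous clopenness of $U_{\phi}$ and $W_{\phi}$ and guarantees ${\rm Int}\,W_{\phi}=W_{\phi}$ under that hypothesis, since this identification is what lets the pointwise criterion of Theorem~\ref{Thm 3.6} collapse to the clean global statement in (ii).
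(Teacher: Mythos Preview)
Your proposal is correct and follows essentially the same approach as the paper's proof: both directions are deduced from Theorem~\ref{Thm 3.6}, with (i)$\Rightarrow$(ii) using that $\partial U_{\phi}=\emptyset$ forces the clopen decomposition, that every point of $W_{\phi}$ is then a P-point, and that a $\sigma$-compact P-space is discrete (\cite[Lemma 4.4]{multid}). Your explicit ``clopen transfer'' of the P-point property is left implicit in the paper but is exactly the missing routine step.
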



\begin{proof} (ii)$\Rightarrow$(i). This follows immediately from
Theorem~\ref{Thm 3.6}.

(i)$\Rightarrow$(ii). Theorem~\ref{Thm 3.6} implies that $\partial
U_{\phi}$ is empty and hence $X_{\phi}$ has the required
decomposition into clopen sets $U_{\phi}$ and $W_{\phi}$. Since
$X_{\phi}$ is $\sigma$-compact, $W_{\phi}$ must be $\sigma$-compact
as well. But by Theorem~\ref{Thm 3.6}, $W_{\phi}$ is a P-space, and
a $\sigma$-compact P-space is discrete, see \cite[Lemma
4.4]{multid}.
\end{proof}


\noindent It is interesting to compare Corollary~\ref{Cor 3.7} with
\cite[Theorem 3.8]{Xalg} which characterizes, for a continuous
$\sigma$-unital $C_0(X)$-algebra $A$, when $M(A)$ is a continuous
$C(\beta X)$-algebra. The conditions in Corollary~\ref{Cor 3.7} are
markedly stronger than those in \cite[Theorem 3.8]{Xalg}, notably
the requirement that $W_{\phi}$ be discrete as against being a
basically disconnected space. On the other hand, if $A$ is separable
then there is a much closer fit with \cite[Corollary 3.9]{Xalg}:
indeed, if $A$ is continuous and separable and $X_{\phi} = X$ then
 $H_x$ is strictly closed for all $x\in X_{\phi}$ if and only if $M(A)$
 is continuous for $\overline\mu$.

\bigskip

 We conclude this section with a couple of elementary abelian
examples, part of whose significance will appear in the next two
sections.


\begin{example} \label{Ex 3.8} \rm (i)  {\sl An abelian $C_0(X)$-algebra $A$
with $x\in W_{\phi}$ such that $H_x$ is strictly closed in $M(A)$.}
 Let $Y=\{ (x,y)\in {\bf R}^2: y\ge 0\}$ be the upper half-plane,
and let $L=\{ (x,y)\in Y: y=0\}$ be the $x$-axis. Let $Y/L$ be the
quotient space (which is completely regular by Lemma~\ref{Lemma
1.8}). Set $A=C_0(Y)$. Then we may identify ${\rm Prim}(A)$ with $Y$
in the usual way and define $\phi: {\rm Prim}(A)\to \beta (Y/L)$ by
$\phi((x,y))=[(x,y)]\in Y/L$. Thus $X_{\phi}=Y/L$. Then
$J_{[(0,0)]}$ is locally modular (since every point in $ L$ has a
compact neighbourhood in $Y$) and $\phi $ is locally closed at
$[(0,0)]$ (since $Y$ is normal). However, $[(0,0)] \notin U_{\phi} $
because if $f\in C(\beta Y/L)$ and $f\circ\phi\in A=C_0(Y)$ then
$f([(0, 0)]) =0$.

 (ii)  {\sl An abelian $C_0(X)$-algebra $A$ with $x\in X_{\phi}$ such
that $H_x$ is not strictly closed in $M(A)$.} Let $A=C_0([0,1)\cup
[2,3])$ and set $X=[0,1]$. Then we may identify ${\rm Prim}(A)$ with
$[0,1)\cup [2,3]$. Let $\phi: {\rm Prim}(A)\to X$ be given by
$\phi(x)=x$ $(0\le x<1)$ and $\phi(x)=1$ $(2\le x\le 3)$. Then $J_x$
is locally modular for all $x\in X$, but $\phi$ is not locally
closed at $x=1$. Hence $H_1$ is not strictly closed in $M(A)$.
\end{example}



\section{$C_0(X)$-algebras where the base map $\phi$ is the complete
regularization map}


 In this section we investigate $C_0(X)$-algebras $A$ where the base
map $\phi$ is the complete regularization map on ${\rm Prim}(A)$
\cite[Theorem 3.9]{GJ}. Thus $X$ may be taken to be the complete
regularization space (in cases where this is locally compact) or its
Stone-\v{C}ech compactification. Restricting $\phi$ in this way
places a considerable constraint on its behaviour, as we shall see.
This case is of special interest for two reasons: firstly, the
complete regularization map interacts with the topology on ${\rm
Prim}(A)$ in a way that is lacking with more general continuous
maps, and secondly, every continuous map from ${\rm Prim}(A)$ to a
locally compact Hausdorff space factors through the complete
regularization map.

Under the hypotheses that $\phi$ is the complete regularization map
and that $\Orc(A)<\infty$ (a technical assumption which is usually
satisfied), we show that if $A$ is $\sigma$-unital and $J_x$ is
locally modular then $\phi$ is locally closed at $x$ and $H_x$ is
strictly closed (Theorem~\ref{Thm 4.3}). Thus the `locally closed at
$x$' part of the hypothesis in the final sentence of
Proposition~\ref{Prop 2.4} is automatically satisfied in this case
(contrast with Example~\ref{Ex 3.8}(ii)).


 We begin by explaining the notation $\Orc(A)$ \cite{Som}. Recall
that for a C$^*$-algebra $A$  and for $P, Q\in {\rm Prim}(A)$ we
write $P\sim Q$ if $P$ and $Q$ cannot be separated by disjoint open
sets in ${\rm Prim}(A)$. The relation $\sim$ on ${\rm Prim}(A)$
induces a graph structure on ${\rm Prim}(A)$ whereby $P$ and $Q$ are
adjacent if $P\sim Q$. The distance $d_A(P,Q)$ between $P$ and $Q$
is then defined as the length of the shortest path from $P$ to $Q$
(and is $\infty$ if no such path exists). The diameter of a
$\sim$-component of ${\rm Prim}(A)$ is the supremum of the distances
between primitive ideals in the component (with the convention that
a singleton component, such as when ${\rm Prim}(A)$ is Hausdorff,
has diameter $1$). The {\sl connecting order}, $\Orc(A)$, is the
supremum of the diameters of $\sim$-components of ${\rm Prim}(A)$.
Clearly $\Orc(A)$ is an integer between $1$ and $\infty$, and all
possibilities occur, including $\infty$ \cite[Example 2.8]{Som} (see
also Example~\ref{Ex 5.4}(ii) below). The smaller that $\Orc(A)$ is,
the nearer ${\rm Prim}(A)$ is to being Hausdorff, with the case
$\Orc(A)=1$ corresponding to $\sim$ being an equivalence relation on
${\rm Prim}(A)$. For a subset $Y\subseteq {\rm Prim}(A)$ and  for
$n\ge 0$, let $Y^n=\{ P\in {\rm Prim}(A): \exists\ Q\in Y,\
d_A(P,Q)\le n\}$.

\bigskip

 We also need the following topological lemma characterizing
separation by open sets. We say that a topological space is {\sl
locally compact} if every point has a neighbourhood base of compact
sets.


\begin{lemma} \label{Lemma 4.1}  Let $X$ be a locally compact topological
space and let $Y$ and $Z$ be subsets of $X$ which are Lindelof in
the relative topology. Then the following are equivalent:

{\rm (i)} The closure of $Y^1$ does not meet $Z$ and the closure of
$Z^1$ does not meet $Y$.

{\rm (ii)} There exist disjoint open subsets $U$ and $V$ of $X$ with
$Y\subseteq U$ and $Z\subseteq V$.
\end{lemma}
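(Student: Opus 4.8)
The plan is to prove the non-trivial direction (i)$\Rightarrow$(ii), since (ii)$\Rightarrow$(i) is routine: if $U,V$ are disjoint open sets separating $Y$ and $Z$, then no point of $Z$ can be a limit of points adjacent to points of $Y$ (such a limit would have to lie in $\overline U\cap V$-type configuration, contradicting disjointness of the open sets via the defining property of $\sim$), and symmetrically. So I would dispose of (ii)$\Rightarrow$(i) in a sentence or two and concentrate on the forward implication.

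For (i)$\Rightarrow$(ii), the key idea is to exploit the Lindel\"of hypothesis to reduce to a countable exhaustion and then build the separating open sets by an inductive, back-and-forth construction using local compactness. First I would use local compactness to choose, for each point $y\in Y$, a compact neighbourhood, and similarly for each $z\in Z$; the Lindel\"of property of $Y$ and $Z$ in the relative topology lets me extract countable subfamilies whose interiors cover $Y$ and $Z$ respectively, say $\{G_n\}_{n\ge 1}$ covering $Y$ and $\{K_n\}_{n\ge 1}$ covering $Z$, with $\overline{G_n}$ and $\overline{K_n}$ compact. The separation hypothesis (i) is what guarantees that I can shrink these so that the relevant closures avoid the opposite set; concretely, the condition that $\overline{Y^1}\cap Z=\emptyset$ (and its symmetric counterpart) is exactly designed so that a point of $Y$ and a point of $Z$ that are close in the $\sim$-graph sense can still be pried apart, which is the mechanism that makes the inductive shrinking possible.

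The main step is then the inductive construction: I would build increasing open sets $U=\bigcup_n U_n$ and $V=\bigcup_n V_n$ where at each stage I enlarge $U$ to cover $G_n$ while keeping $\overline{U_n}$ disjoint from the part of $Z$ already handled, and symmetrically enlarge $V$ to cover $K_n$ while keeping $\overline{V_n}$ disjoint from $\overline{U_n}$. The role of the $Y^1$/$Z^1$ closure hypothesis is to ensure at each stage that the compact piece I am adding to $U$ has closure disjoint from the compact piece being added to $V$, so that local compactness furnishes disjoint open swellings; this is where the $\sim$-adjacency (encoded in the superscript-$1$ notation and the separation-by-open-sets characterization) enters, since avoiding $Z^1$ rather than merely $Z$ gives the extra room needed to keep the swollen open sets apart. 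The Lindel\"of hypothesis is essential precisely because it reduces the construction to countably many steps, so that the union of the $U_n$ (respectively $V_n$) is still open and the disjointness, maintained at every finite stage, persists in the limit.

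The hardest part, I expect, will be verifying that the two growing open sets remain \emph{globally} disjoint rather than merely disjoint at each finite stage --- that is, ensuring $\overline{U_n}\cap V_m=\emptyset$ for \emph{all} $m$, not just $m\le n$. The clean way to handle this is to maintain, as an inductive invariant, that $\overline{U_n}$ is disjoint from $Z^1$ and $\overline{V_n}$ is disjoint from $Y^1$ (not merely from $Z$ and $Y$); then any future addition to $V$, being built inside a neighbourhood of $Z$, stays within a controlled distance of $Z$ and so cannot meet $\overline{U_n}$, and symmetrically. Getting these invariants to close up --- so that enlarging $U$ to absorb $G_{n}$ does not violate the disjointness-from-$V$'s-future condition --- is the delicate bookkeeping that the proof must carry out, and I would organize the induction specifically around preserving these two closure-avoidance invariants.
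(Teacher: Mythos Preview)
Your proposal is correct and follows essentially the same route as the paper: use hypothesis (i) together with local compactness to pick, for each $y\in Y$, a compact neighbourhood contained in $X\setminus Z^1$ (and symmetrically for $Z$), use the Lindel\"of property to extract countable subcovers, and then interleave. The one place where the paper is slicker is the step you identify as ``hardest'': rather than maintain inductive invariants of the form $\overline{U_n}\cap Z^1=\emptyset$, the paper simply writes down the explicit sets
\[
U'_i \;=\; U_i \setminus \bigcup_{j\le i}\overline{V_j},\qquad
V'_i \;=\; V_i \setminus \bigcup_{j\le i}\overline{U_j},
\]
and takes $U=\bigcup_i U'_i$, $V=\bigcup_i V'_i$. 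This is the classical device from the proof that regular Lindel\"of spaces are normal; the global disjointness of $U$ and $V$ is then a two-line index comparison (if $x\in U'_i\cap V'_j$ with $i\ge j$, then $x\in V_j\subseteq\overline{V_j}$, which was removed from $U'_i$), and the containments $Y\subseteq U$, $Z\subseteq V$ follow because each $\overline{V_j}$ misses $Y$ and each $\overline{U_j}$ misses $Z$. So your invariants are sound but unnecessary; the subtract-off formula gives the same conclusion without the bookkeeping.
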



\begin{proof} Suppose first that (ii) holds. Then $X\setminus V$ is
disjoint from $Z$ and is a closed set containing the neighbourhood
$U$ of $Y$ and hence containing $Y^1$. Similarly $X\setminus U$ is
disjoint from $Y$ and is a closed set containing $Z^1$. Hence (i)
holds.

Conversely, suppose that (i) holds. Let $x\in Y$. Since the closure of $Z^1$
does not meet $Y$, $x$ has an open neighbourhood disjoint from $Z^1$.
Hence, by the local compactness of
$X$, $x$ has a compact neighbourhood $U_x\subseteq X\setminus Z^1$. Then $U_x^1$ is
closed
because $U_x$ is compact, and $U_x^1$ does not meet $Z$ because $U_x$ does not
meet $Z^1$.
Similarly for each $x\in Z$ there exists a
neighbourhood $V_x$ of $x$ such that the closure of $V_x$ does not meet $Y$.

Since $Y$ and $Z$ are Lindelof, we may obtain a countable
collection, say $U_1, U_2, \ldots$, of the sets ${\rm Int}\ U_x$
such that $\bigcup_{i=1}^{\infty} U_i$ covers $Y$, and likewise a
countable collection $V_1, V_2, \ldots$ of the sets ${\rm Int}\ V_x$
such that $\bigcup_{i=1}^{\infty} V_i$ covers $Z$. For each $i\ge
1$, set $U'_i=U_i\setminus \left(\bigcup_{j=1}^i \overline
V_j\right)$ and $V'_i=V_i\setminus \left(\bigcup_{j=1}^i \overline
U_j\right)$ (where $\overline V_i$ denotes the closure of $V_i$,
etc). Set $U=\bigcup_{i=1}^{\infty} U'_i$ and
$V=\bigcup_{i=1}^{\infty} V'_i$. Then it is easily checked that
$Y\subseteq U$ and $Z\subseteq V$. If $x\in U\cap V$ then there
exist $U'_i$ and $V'_j$ such that $x\in U'_i\cap V'_j$. Without loss
of generality we may suppose that $i\ge j$. But then $V'_j\subseteq
V_j$ which is disjoint from $U'_i$, a contradiction. Hence $U$ and
$V$ are disjoint, and thus (ii) holds.
\end{proof}


 Now let $A$ be a $\sigma$-unital $C_0(X)$-algebra. Let $E$ be a
non-empty closed subset of $X_{\phi}$, set $Y=X_{\phi}/E$ and let
$q:X_{\phi}\to Y$ be the quotient map. Set $\psi=q\circ\phi$ and
$\{e\}=q(E)$. We saw in Proposition~\ref{Prop 1.9} that the question
of spectral synthesis for the set $E$ can be reduced to that of the
point $e$, and for this reason we have previously confined ourselves
to considering spectral synthesis at points. If we restrict $\phi$
to be the complete regularization map, however, then we can no
longer make this reduction (because the reduction changes the base
map), and we will therefore have to work with closed sets in this
section.

With this in mind, and with the notation above,
we say that $\phi$ is {\sl locally closed at $E$} if $\psi$ is locally
closed at $e$, and that $J_E$ is {\sl locally modular} if $J_e$ is locally modular.
Elementary topological arguments show that $\phi$ is locally closed at $E$ if and only if
whenever $Y$ is a closed subset of $\Prim(A)$ such that  $\phi(Y) \cap E = \emptyset$
(i.e. $Y \cap \phi^{-1}(E) = \emptyset$ )  then  $\overline {\phi(Y)} \cap E = \emptyset$.
Note that $H(e) = (q\circ \phi)^{-1}(e) = \phi^{-1}(E)$, the hull of $J_E$ in Prim(A), and
recall that $U(e)$ is the interior of $H(e)$ in $\Prim(A)$.
For $Q, R\in \Prim(M(A))\setminus \tilde U(e)$, recall that we write $Q\sim_e R$ if
there
is a net $(P_{\alpha})$ in $\Prim(A)\setminus U(e)$ such that $\tilde P_{\alpha}\to Q, R$.


\begin{prop} \label{Prop 4.2}  Let $A$ be a $\sigma$-unital
$C_0(X)$-algebra with base map $\phi$ and let $E$ be a non-empty
closed subset of $X_{\phi}$. Consider the following conditions

{\rm (i)} $H_E$ is strictly closed.

{\rm (ii)} $J_E$ is locally modular and $\phi$ is locally closed at
$E$.

\noindent Then {\rm (ii)}$\Rightarrow${\rm (i)}, and {\rm (i)} and
{\rm (ii)} are equivalent if $A$ is separable.
\end{prop}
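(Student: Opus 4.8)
The plan is to reduce the whole statement to the pointwise results already proved, by passing to the quotient base space of Proposition~\ref{Prop 1.9}. Set $Y=X_{\phi}/E$, let $q\colon X_{\phi}\to Y$ be the quotient map, put $\psi=q\circ\phi$ and $\{e\}=q(E)$. Since $A$ is $\sigma$-unital, Proposition~\ref{Prop 1.9} guarantees that $A$ is a $C(\beta Y)$-algebra with base map $\psi$ and that $H_E=H_e$. By the conventions fixed just before the statement, ``$J_E$ is locally modular'' means exactly that $J_e$ is locally modular, and ``$\phi$ is locally closed at $E$'' means exactly that $\psi$ is locally closed at $e$, both computed in the presentation $(A,\beta Y,\psi)$; recall in particular that $H(e)=\psi^{-1}(e)=\phi^{-1}(E)$, so the relevant hull inside $\Prim(A)$ is unchanged. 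Consequently condition (i) for $E$ is precisely ``$H_e$ is strictly closed'', and condition (ii) for $E$ is precisely condition (ii) of the pointwise statements at $e$. Finally, since $E\neq\emptyset$ we have $\phi^{-1}(E)\neq\emptyset$, so $e=\psi(P)\in\operatorname{Im}(\psi)=Y$ for any $P\in\phi^{-1}(E)$, and $e$ is a legitimate point of the base space of $(A,\beta Y,\psi)$.

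With this dictionary in hand, the implication (ii)$\Rightarrow$(i) is immediate. Applying the final assertion of Proposition~\ref{Prop 2.4} to the $C(\beta Y)$-algebra $(A,\beta Y,\psi)$ at the point $e$, the local modularity of $J_e$ together with $\psi$ being locally closed at $e$ forces $H_e$ to be strictly closed; that is, $H_E$ is strictly closed. No $\sigma$-unitality beyond what Proposition~\ref{Prop 1.9} already consumes is required at this stage, because Proposition~\ref{Prop 2.4} is valid for arbitrary $C_0(X)$-algebras (its proof invokes only the unconditional direction (i)$\Rightarrow$(ii) of Lemma~\ref{Lemma 2.3}).

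For the equivalence under the hypothesis that $A$ is separable, I would invoke Corollary~\ref{Cor 2.10} for the structure $(A,\beta Y,\psi)$. Separability is an intrinsic property of $A$, hence is inherited by the new $C(\beta Y)$-algebra presentation, and separability entails $\sigma$-unitality, so Proposition~\ref{Prop 1.9} indeed applies. Corollary~\ref{Cor 2.10} (whose use of Lemma~\ref{Lemma 2.9} makes $\{e\}$ a zero set of $\operatorname{Im}(\psi)=Y$) then yields that $H_e$ is strictly closed if and only if $J_e$ is locally modular and $\psi$ is locally closed at $e$. Translating back through the reduction gives (i)$\Leftrightarrow$(ii) for $E$.

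The only points requiring care---and the nearest thing to an obstacle---are bookkeeping rather than analytic: one checks that $\beta Y$ is compact Hausdorff, so that $(A,\beta Y,\psi)$ genuinely is a $C_0$-algebra to which Proposition~\ref{Prop 2.4} and Corollary~\ref{Cor 2.10} apply, that $e\in\operatorname{Im}(\psi)$, and above all that the notions newly defined for $E$ coincide \emph{by definition} with the pointwise notions at $e$. Since those definitions were set up precisely to make the reduction transparent, all the substantive work is carried by Proposition~\ref{Prop 1.9}, Proposition~\ref{Prop 2.4} and Corollary~\ref{Cor 2.10}, and no genuinely new argument is needed.
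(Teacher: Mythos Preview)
Your proposal is correct and follows essentially the same approach as the paper's own proof: reduce to the point $e$ in the quotient $Y=X_{\phi}/E$ via Proposition~\ref{Prop 1.9}, then invoke Proposition~\ref{Prop 2.4} for (ii)$\Rightarrow$(i) and Corollary~\ref{Cor 2.10} for the converse under separability. The paper's proof is terser, but the logical skeleton is identical.
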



\begin{proof} With the notation above, we have $J_e=J_E$ and $H_e=H_E$
by Proposition~\ref{Prop 1.9}. Hence (ii)$\Rightarrow$(i) follows
immediately from Proposition~\ref{Prop 2.4}.

Conversely, suppose that $A$ is separable and that (i) holds. Then
$H_e$ is strictly closed by Proposition~\ref{Prop 1.9} and it
follows from Corollary~\ref{Cor 2.10} that $J_e=J_E$ is locally
modular and $q\circ\phi$ is locally closed at $e$. Hence $\phi$ is
locally closed at $E$ by definition.
\end{proof}


 We are now ready for the main theorem of the section.


\begin{thm} \label{Thm 4.3}  Let $A$ be a $\sigma$-unital
$C_0(X)$-algebra and suppose that $\phi$ is the complete
regularization map for ${\rm Prim}(A)$ and that $\Orc(A)<\infty$.
Let $E$ be a non-empty closed subset of $X_{\phi}$ and suppose that
$J_E$ is locally modular. Then $\phi$ is locally closed at $E$ and
$H_E$ is strictly closed.
\end{thm}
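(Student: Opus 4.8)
The plan is to reduce everything to showing that $\phi$ is locally closed at $E$: once this is established, the hypothesis that $J_E$ is locally modular means that condition (ii) of Proposition~\ref{Prop 4.2} holds, and the implication (ii)$\Rightarrow$(i) of that proposition immediately yields that $H_E$ is strictly closed. So the whole content of the theorem is the implication ``$J_E$ locally modular $\Rightarrow$ $\phi$ locally closed at $E$''. I would prove this in contrapositive form, and in fact establish the stronger unconditional statement that, under the standing hypotheses, if $\phi$ fails to be locally closed at $E$ then $J_E$ cannot be locally modular. By Lemma~\ref{Lemma 2.3} (and $\sigma$-unitality) this amounts to exhibiting $P\in\partial H(e)$ and $R\in\Prim(M(A)/A)$ with $\tilde P\sim_e R$.

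For the setup, failure of local closedness at $E$ provides (using the reformulation recorded before Proposition~\ref{Prop 4.2}) a closed set $Y\subseteq\Prim(A)$ with $Y\cap H(e)=\emptyset$ and a point $x\in E$ lying in the closure of $\phi(Y)$. Here the hypothesis that $\phi$ is the complete regularization map does the essential work: $C^b(X_\phi)$ is canonically $C^b(\Prim(A))$, and every such function is constant on $\sim$-components of $\Prim(A)$, so $x\in\overline{\phi(Y)}$ says precisely that the fibre $H(x)$ and the set $Y$ cannot be completely separated by a bounded continuous function on $\Prim(A)$. Now I would bring in $\Orc(A)=n<\infty$: the $\sim$-components have diameter at most $n$, so $Y^n$ is exactly the $\sim$-saturation $\phi^{-1}(\phi(Y))$ of $Y$, while $H(x)$ is a single $\sim$-component. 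Both sets are Lindel\"of, since $A$ is $\sigma$-unital and hence $\Prim(A)$ is $\sigma$-compact, so Lemma~\ref{Lemma 4.1} applies. Because $Y^n\cap H(e)=\emptyset$, the saturated set $H(x)$ is already separated from $Y^n$ on one side, and the only way complete separation can fail is that $\overline{Y^n}\cap H(x)\neq\emptyset$; this produces a point $P\in H(x)\cap\overline{Y^n}$. Since $Y^n\cap H(e)=\emptyset$, no point of $Y^n$ lies in the open set $U(e)$, so $P\notin U(e)$ and therefore $P\in\partial H(e)$.

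The crux is to promote this geometric picture into the relation $\tilde P\sim_e R$. Since $M(A)$ is unital, $\Prim(M(A))$ is compact, and I would lift to the multipliers throughout. A net $(Q_\beta)$ in $Y$ with $\phi(Q_\beta)\to x$ has, after passing to a subnet, $\tilde Q_\beta\to R$ for some $R\in\Prim(M(A))$ with $\overline\phi(R)=x$; this $R$ must satisfy $R\supseteq A$, for otherwise $R=\tilde Q$ with $Q\in\Prim(A)$, forcing $Q_\beta\to Q$ and hence $Q\in\overline Y\cap H(x)\subseteq Y\cap H(e)=\emptyset$ (the disjointness of the closures of $\tilde Y$ and $\tilde H(x)$ guaranteed by Lemma~\ref{Lemma 2.1} makes this precise). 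Thus $R\in\Prim(M(A)/A)$. It remains to connect the corona point $R$ to the boundary point $P\in\partial H(e)\cap\overline{Y^n}$ by a single net in $\Prim(A)\setminus U(e)$ converging to both $\tilde P$ and $R$. The decisive use of $\Orc(A)=n$ is here: every point of $Y^n$ is joined to $Y$ by a $\sim$-path of length at most $n$, so one can carry out an inductive construction---modelled on the net construction in the proof of Theorem~\ref{Thm 2.6}, with a strictly positive element $u$ keeping the corona norm $\Vert(1-u)+\,\cdot\,\Vert$ equal to $1$---producing a net in $\Prim(A)\setminus U(e)$ that simultaneously accumulates at $\tilde P$ and at $R$ while never leaving the bounded $\sim$-neighbourhood of $Y$. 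This gives $\tilde P\sim_e R$, contradicting local modularity by Lemma~\ref{Lemma 2.3}, so $\phi$ is locally closed at $E$, and the theorem follows.

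I expect the main obstacle to be exactly this last bridging step: producing one net that converges to the corona point $R$ and to the boundary point $\tilde P$ at the same time. The two points arise from different mechanisms---$R$ from letting the $Y$-net escape into the corona, $\tilde P$ from a topological accumulation of the $\sim$-saturation $Y^n$ on the boundary---and reconciling them requires the finiteness of $\Orc(A)$ to bound the length of the $\sim$-walk linking $Y$ to $\partial H(e)$, so that the interpolating net stays controllable. Everything else (the reduction via Proposition~\ref{Prop 4.2}, the complete-regularization translation to complete separation, and the separation bookkeeping via Lemmas~\ref{Lemma 4.1} and~\ref{Lemma 2.1}) is comparatively routine.
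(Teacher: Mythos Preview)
Your contrapositive strategy is genuinely different from the paper's direct construction, and it has real gaps at two places.

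First, the identifications you make are not valid: $Y^n$ need not equal the Glimm-saturation $\phi^{-1}(\phi(Y))$, and $H(x)$ need not be a single $\sim$-component. When $\Orc(A)<\infty$, $\sim$-components are contained in Glimm classes but can be strictly smaller (this is exactly the dichotomy of Theorem~\ref{Thm 5.2}). More seriously, your appeal to Lemma~\ref{Lemma 4.1} does not deliver what you claim. That lemma characterizes separation by \emph{disjoint open sets}, whereas $x\in\overline{\phi(Y)}$ is the failure of separation by a continuous function on $\Prim(A)$ (equivalently, in $X_\phi$). Since $\Prim(A)$ is not normal, these are genuinely different, and the failure of function separation does not yield a point of $H(x)$ in $\overline{Y^n}$. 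Second, even granting a boundary point $P\in\partial H(e)$ and a corona point $R$, the two arise from unrelated nets, and your ``bridging step'' does not explain how to produce a single net in $\Prim(A)\setminus U(e)$ converging to both; the mechanism of Theorem~\ref{Thm 2.6} does not do this.

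The paper argues in the forward direction. It defines the ``bad'' set $Y_0=\{Q\in\Prim(A)\setminus U(e):\exists\,R\in\Prim(M(A)/A),\ \tilde Q\sim_e R\}$, uses local modularity via Lemma~\ref{Lemma 2.3} to get $Y_0\cap H(e)=\emptyset$, proves $Y_0$ and $Y_0^1$ are closed (here compactness of $\Prim(M(A))$ is used), and separates $H(e)$ from $Y_0$ by open sets via Lemma~\ref{Lemma 4.1}. The substantial new idea is then to form a tailored quotient of $\Prim(A)$: collapse $H(e)$ to a point, collapse a suitable closed $\sim$-enlargement $Z^k$ to a point, and take $\sim$-components elsewhere. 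The paper shows this quotient map is closed (this is where $\Orc(A)<\infty$ and the specific definition of $Y_0$ do essential work), deduces that the quotient space is normal, and pulls back a Urysohn function to obtain the continuous separating function on $\Prim(A)$. The construction of this auxiliary quotient and the verification that the quotient map is closed are precisely the steps your outline is missing.
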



\begin{proof} As before, let $q:X_{\phi}\to X_{\phi}/E$ be the
quotient map and $\{e\}=q(E)$. By Proposition~\ref{Prop 4.2}, it
suffices to show that $\phi$ is locally closed at $E$. Let $V$ be a
proper open subset of ${\rm Prim}(A)$ with $V\supseteq H(e)$. It
suffices to produce a continuous function $g$ on ${\rm Prim}(A)$
with $g({\rm Prim}(A)\setminus V)=\{ 1\}$ and $g(H(e))=\{0\}$ (for
then $g$ induces a continuous function on $\phi(\Prim(A)$)
separating $\phi(\Prim(A)\setminus V)$  and $E$, so that
$\overline{\phi(\Prim(A)\setminus V)} \cap E = \emptyset$).

Set $Y=\{ Q\in {\rm Prim}(A)\setminus U(e): \exists R\in {\rm
Prim}(M(A)/A)\hbox{ with } \tilde Q\sim_e R\}$. Then $Y\cap H(e)$ is
empty by Lemma~\ref{Lemma 2.3}. We claim that $Y$ is a closed subset
of ${\rm Prim}(A)$. To see this, let $(Q_{\alpha})$ be a net in $Y$
with $Q_{\alpha}\to Q\in {\rm Prim}(A)$. Then $Q\notin U(e)$ since
${\rm Prim}(A)\setminus U(e)$ is closed. By definition of $Y$, there
is a net $(R_{\alpha})$ in ${\rm Prim}(M(A)/A)$ with $\tilde
Q_{\alpha}\sim_e R_{\alpha}$ for each ${\alpha}$. By the compactness
of ${\rm Prim}(M(A)/A)$, there is a subnet $(R_{\beta})$ of
$(R_{\alpha})$ with $R_{\beta}\to R\in {\rm Prim}(M(A)/A)$. Then
$\tilde Q\sim_e R$ and so $Q\in Y$, as required.

Next we note that $Y^1$ is closed in ${\rm Prim}(A)$. To see this, let $(Q_{\alpha})$ be a net in $Y^1$
with $Q_{\alpha}\to Q\in {\rm Prim}(A)$. Then $Q\notin U(e)$,
for otherwise eventually $Q_{\alpha}\in U(e)\subseteq
H(e)$, which is impossible since $H(e)$ is a $\sim$-saturated set disjoint from $Y$.
Let $(P_{\alpha})$ be a net in $Y$ such that $P_{\alpha}\sim Q_{\alpha}$
for each $\alpha$. Since $P_{\alpha}, Q_{\alpha}\notin H(e)$,
$\tilde P_\alpha\sim_e \tilde Q_\alpha$.
By the compactness of ${\rm Prim}(M(A))$ there exists $R\in {\rm Prim}(M(A))$ and
a subnet $(P_{\beta})$
of $(P_{\alpha})$ such that $\tilde P_\beta\to R$. Hence $R\sim_e \tilde Q$.
If $R=\tilde S$ for some $S\in Y$
then $Q\in Y^1$ as required. Otherwise $R\in {\rm Prim}(M(A)/A)$ and hence $Q\in Y\subseteq Y^1$. Thus $Y^1$ is closed.

Since $A$ is $\sigma$-unital, $\Prim(A)$ is $\sigma$-compact and
hence every closed subset of $\Prim(A)$ is a Lindelof space. Thus,
since $H(e)$ is closed and $\sim$-saturated, we may apply
Lemma~\ref{Lemma 4.1} to $H(e)$ and $Y$ to obtain disjoint open
subsets $V'$ and $V''$ of ${\rm Prim}(A)$ with $H(e)\subseteq V'$
and $Y\subseteq V''$. By intersecting $V'$ with the open set $V$ of
the first paragraph, we may assume that $V'\subseteq V$. Set $Z={\rm
Prim}(A)\setminus V'$. Then $Z\supseteq V''\supseteq Y$ and the
boundary of $Z$ in ${\rm Prim}(A)$ does not meet $V''$. Let
$k=\Orc(A)$. Then the same argument that showed that $Y^1$ is closed
also shows, inductively, that $Z^1, \ldots, Z^k$ are closed. Note
that if $Q$ belongs to the boundary of $Z^k$ in ${\rm Prim}(A)$ then
$Q\notin V''$ and so $Q\notin Y$. Thus there does not exist $R\in
{\rm Prim}(M(A)/A)$ with $\tilde Q\sim_e R$.

Note that the disjoint sets $Z^k$  and $H(e)$ are $\sim$-saturated and hence so is the set
$F:= {\rm Prim}(A)\setminus (Z^k\cup H(e))$.
We now define an equivalence relation $\diamond$ on ${\rm Prim(A)}$ as follows. The $\diamond$-equivalence
classes are: $Z^k$, $H(e)$, and the $\sim$-components of $F$.
Set $W={\rm Prim}(A)/\diamond$, equipped with the quotient topology,
and let $p:{\rm Prim}(A)\to W$ be the quotient map. We
show that $p$ is a closed map. Let $T$ be a closed subset of ${\rm Prim}(A)$
and set $T'= p^{-1}(p(T))$. Let $(Q_{\alpha})$ be a net in $T'$ with a limit $Q\in {\rm Prim}(A)$.
We must show that $Q\in T'$. For each $Q_{\alpha}$ there exists $R_{\alpha}\in T$ such
that $Q_{\alpha}\diamond R_{\alpha}$, and by the compactness of ${\rm Prim}(M(A))$ and
by passing to subnets of $(Q_{\alpha})$ and $(R_{\alpha})$, if necessary, we may
assume that there exists $R\in {\rm Prim}(M(A))$ with $\tilde R_{\alpha}\to R$.

We consider various cases. If $(Q_{\alpha})$ is frequently in $Z^k$
then $Q\in Z^k$, since $Z^k$ is closed. Hence $Q\diamond Q_{\alpha}$
for $Q_{\alpha}\in Z^k$ and so $Q\in T'$ since $T'$ is
$\diamond$-saturated. A similar argument shows that $Q\in T'$ if
$(Q_{\alpha})$ is frequently in $H(e)$. Hence we may restrict
attention to the case when $Q_{\alpha}\in F$ for all $\alpha$. This
implies that $d_A(Q_{\alpha}, R_{\alpha})\le k$ and hence that for
each $\alpha$ there is a walk $Q_{\alpha}\sim
Q^1_{\alpha}\sim\ldots\sim Q^k_{\alpha}=R_{\alpha}$ (possibly with
repetitions) of length $k$ between $Q_{\alpha}$ and $R_{\alpha}$.
Hence $\tilde Q_{\alpha}\sim_e \tilde Q^1_{\alpha}\sim_e\ldots\sim_e
\tilde Q^k_{\alpha}=\tilde R_{\alpha}$. Using the compactness of
${\rm Prim}(M(A))\setminus\tilde U(e)$, and passing to successive
subnets, we obtain a walk $\tilde Q\sim_e Q^1\sim_e\ldots\sim_e
Q^k=R$ of length $k$ in ${\rm Prim}(M(A))\setminus \tilde U(e)$ such
that $\tilde Q, Q^1,\ldots, Q^k$ all lie in $(\tilde F)^{-}$, the
closure of $\tilde F$ in ${\rm Prim}(M(A))$.

Suppose that $P\in F^{-}$ (the closure of $F$ in ${\rm Prim}(A)$)
and that $\tilde P\sim_e P'$ for some $P'\in (\tilde F)^{-}$. Then
$P\notin U(e)$ and $P\notin Y$ and hence, by the definition of $Y$,
$P'=\tilde S$ for some $S\in {\rm Prim}(A)$. Furthermore, $S\in
F^{-}$ because $\tilde S\in (\tilde F)^{-}$. Since $Q\in F^{-}$, it
follows by induction that $Q^i=\tilde S_i$ $(1\le i\le k)$ for some
$S_i\in {\rm Prim}(A)$, and hence that $Q\diamond S_k$. But $S_k\in
T$, since $T$ is closed in ${\rm Prim}(A)$, and hence $Q\in T'$ as
required. Thus we have shown that $p$ is a closed map.

Now let $C$ and $D$ be non-empty, disjoint closed subsets of $W$.
Then $C':=p^{-1}(C)$ and $D':=p^{-1}(D)$ are disjoint closed
$\sim$-saturated subsets of ${\rm Prim}(A)$. Thus $C'$ and $D'$ are
Lindelof and so Lemma~\ref{Lemma 4.1} implies the existence of
disjoint open sets $E'$ and $F'$ containing $C'$ and $D'$
respectively. We now use a standard characterization (see e.g.
\cite[$\S$13.XIV Theorem 3]{Kur}): a quotient map $r:M\to N$ is
closed if and only if whenever $d\in N$ and $O$ is an open set
containing $r^{-1}(d)$ then there exists a saturated open set $H$
such that $r^{-1}(d)\subseteq H\subseteq O$ (where $H$ is saturated
if $H=r^{-1}(r(H))$). Applying this characterization in the present
case to each of the points of $C$ and $D$ relative to $E'$ and $F'$
we obtain $\diamond$-saturated open sets $E''$ and $F''$ such that
$C'\subseteq E''\subseteq E'$ and $D'\subseteq F''\subseteq F'$.
Hence $p(E'')$ and $p(F'')$ are disjoint open sets of $W$ containing
$C$ and $D$ respectively. Thus $W$ is normal.

It follows that there is a positive continuous function $f$ on $W$
with $\Vert f\Vert=1$ such that $f(p(Z^k))=\{1\}$ and
$f(p(H(e)))=\{0\}$. Then $g=f\circ p$ is a continuous function on
${\rm Prim}(A)$ with $g(Z^k)= \{1\}$ and $g(H(e))=\{0\}$. Since
${\rm Prim}(A)\setminus V\subseteq Z^k$, $g$ has the property
required at the start of the proof.
\end{proof}


 Now let $A$ be a $C_0(X)$-algebra with base map $\phi$. Then
$\phi$ factors as $\phi=\psi\circ\phi_A$ where $\phi_A$ is the
complete regularization map and $\psi$ is continuous. The advantage
of the following result over Proposition~\ref{Prop 4.2} is that
$\psi$ is a map between completely regular spaces and should
therefore be simpler to analyze.

Let $X_{\phi_A}$ denote the image of $\Prim(A)$ under the complete regularization
map $\phi_A$. Then $\psi$ is a map from $X_{\phi_A}\to X_{\phi}$.
By analogy with our earlier definition, we say that $\psi$ is locally closed at a
non-empty subset $E\subseteq X_{\phi}$ if
whenever $W$ is a closed subset of $X_{\phi_A}$ with $\psi(W) \cap E = \emptyset$
(i.e. $W \cap \psi^{-1}(E) = \emptyset$) then $\overline{\psi(W)} \cap E = \emptyset$.


\begin{cor} \label{Cor 4.4}  Let $A$ be a $\sigma$-unital
$C_0(X)$-algebra with base map $\phi$ and suppose that
$\Orc(A)<\infty$. Write $\phi=\psi\circ\phi_A$ where $\phi_A$ is the
complete regularization map for ${\rm Prim}(A)$. Let $E$ be a
non-empty closed subset of $X_{\phi}$. Consider the following
conditions:

{\rm (i)} $H_E$ is strictly closed;

{\rm (ii)} $J_E$ is locally modular and $\psi:X_{\phi_A}\to
X_{\phi}$ is locally closed at $E$.

\noindent Then {\rm (ii)}$\Rightarrow${\rm (i)}, and {\rm (i)} and
{\rm (ii)} are equivalent if $A$ is separable.
\end{cor}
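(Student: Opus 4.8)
The plan is to transport everything into the picture where the base map is the complete regularization map $\phi_A$, and then apply Theorem~\ref{Thm 4.3} and Proposition~\ref{Prop 4.2}. First I would do the elementary bookkeeping. Set $E_A=\psi^{-1}(E)\subseteq X_{\phi_A}$. Since $\psi$ is continuous and $E$ is closed, $E_A$ is closed; and $E_A$ is non-empty because each $x\in E$ has a preimage under $\phi=\psi\circ\phi_A$, whose image under $\phi_A$ lands in $E_A$. The crucial identity is
$\phi_A^{-1}(E_A)=(\psi\circ\phi_A)^{-1}(E)=\phi^{-1}(E)=H(e)$,
so the hull of $J_{E_A}$ in $\Prim(A)$ is exactly the hull $H(e)$ of $J_E$. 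Because local modularity is defined purely in terms of this hull and the topology of $\Prim(A)$ (via $\partial H(e)$ and the interior $U(e)$), it follows that $J_E$ is locally modular if and only if $J_{E_A}$ is locally modular. In particular the hypothesis in (ii) transfers verbatim to the $\phi_A$-picture.

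For (ii)$\Rightarrow$(i), I would apply Theorem~\ref{Thm 4.3} to the triple $(A,\phi_A,E_A)$: here $A$ is $\sigma$-unital, $\phi_A$ is the complete regularization map, $\Orc(A)<\infty$, $E_A$ is a non-empty closed subset of $X_{\phi_A}$, and $J_{E_A}$ is locally modular. The theorem yields that $\phi_A$ is locally closed at $E_A$. I then chain the two local-closedness statements. Given a closed $Y\subseteq\Prim(A)$ with $\phi(Y)\cap E=\emptyset$, the identity $\phi=\psi\circ\phi_A$ gives $\phi_A(Y)\cap E_A=\emptyset$; local closedness of $\phi_A$ at $E_A$ shows that $W:=\overline{\phi_A(Y)}$ (closure in $X_{\phi_A}$) is disjoint from $E_A$, i.e. $\psi(W)\cap E=\emptyset$; and local closedness of $\psi$ at $E$ (the hypothesis) then gives $\overline{\psi(W)}\cap E=\emptyset$. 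Since $\phi(Y)=\psi(\phi_A(Y))\subseteq\psi(W)$, we get $\overline{\phi(Y)}\subseteq\overline{\psi(W)}$ and hence $\overline{\phi(Y)}\cap E=\emptyset$. Thus $\phi$ is locally closed at $E$, and Proposition~\ref{Prop 4.2} ((ii)$\Rightarrow$(i)) delivers that $H_E$ is strictly closed.

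For (i)$\Rightarrow$(ii) with $A$ separable, I would invoke the separable case of Proposition~\ref{Prop 4.2} to conclude from (i) that $J_E$ is locally modular and $\phi$ is locally closed at $E$. The first is half of (ii); it remains to upgrade local closedness of $\phi$ at $E$ to local closedness of $\psi$ at $E$. Given a closed $W\subseteq X_{\phi_A}$ with $\psi(W)\cap E=\emptyset$, put $Y:=\phi_A^{-1}(W)$, a closed subset of $\Prim(A)$. Since $\phi_A$ is surjective onto $X_{\phi_A}$ and $W\subseteq X_{\phi_A}$, we have $\phi_A(Y)=W$ and so $\phi(Y)=\psi(W)$, whence $\phi(Y)\cap E=\emptyset$. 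Local closedness of $\phi$ at $E$ then gives $\overline{\psi(W)}\cap E=\overline{\phi(Y)}\cap E=\emptyset$, as required. Note that this direction uses neither $\Orc(A)<\infty$ nor Theorem~\ref{Thm 4.3}; the finiteness of $\Orc(A)$ is needed only in (ii)$\Rightarrow$(i), precisely to extract local closedness of $\phi_A$ at $E_A$.

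The main obstacle is resisting the temptation to prove $H_E=H_{E_A}$ directly and read off strict closedness from the conclusion of Theorem~\ref{Thm 4.3}: the two ideals are defined through different Stone--\v Cech extensions, $\overline\phi:\Prim(M(A))\to\beta X$ versus $\overline{\phi_A}:\Prim(M(A))\to\beta X_{\phi_A}$, and reconciling their behaviour on the boundary $\beta X_{\phi_A}\setminus X_{\phi_A}$ is delicate. Routing through local closedness and Proposition~\ref{Prop 4.2} avoids this altogether. The points requiring care are then purely topological: keeping track of which space each closure is taken in (the $\phi_A$-closure of $\phi_A(Y)$ lives in $X_{\phi_A}$, while the final closure lives in $X_\phi$), and recording that the common hull $\phi^{-1}(E)=\phi_A^{-1}(E_A)$ is exactly what makes the two notions of local modularity coincide so that Theorem~\ref{Thm 4.3} is applicable.
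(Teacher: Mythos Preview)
Your proof is correct and follows essentially the same route as the paper's: both transport the problem to the $\phi_A$-picture via $E_A=\psi^{-1}(E)$, invoke Theorem~\ref{Thm 4.3} to obtain that $\phi_A$ is locally closed at $E_A$, chain this with local closedness of $\psi$ to deduce that $\phi$ is locally closed at $E$, and finish with Proposition~\ref{Prop 4.2}; the converse direction is likewise identical. Your write-up is more explicit than the paper's (particularly the observation that local modularity of $J_E$ and $J_{E_A}$ coincide because their hulls in $\Prim(A)$ agree), but the argument is the same.
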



\begin{proof} Set $H(E)=\phi^{-1}(E)$. First, suppose that (ii) holds
and let $Y$ be a closed  subset of ${\rm Prim}(A)$ with $Y\cap H(E)$
empty. Let $W$ be the closure of $\phi_A(Y)$ in $X_{\phi_A}$. Then
$W$ does not meet $\phi_A(H(E))=\psi^{-1}(E)$ by Theorem~\ref{Thm
4.3}. Hence $\psi(W) \cap E = \emptyset$ and so the closure of
$\psi(W)$ in $X_{\phi}$ does not meet $E$, since $\psi$ is locally
closed at $E$. But $\phi(Y)\subseteq \psi(W)$ and hence $\overline
{\phi(Y)} \cap E = \emptyset$. Thus $\phi$ is locally closed at E
and so $J_E$ is strictly closed by Proposition~\ref{Prop 4.2}.

Conversely, suppose that (i) holds and that $A$ is separable. Then
$J_E$ is locally modular and $\phi$ is locally closed at $E$ by
Proposition~\ref{Prop 4.2}. Let $W$ be a closed set in $X_{\phi_A}$
such that $W\cap \psi^{-1}(E)$ is empty. Set $Y=\phi_A^{-1}(W)$.
Then $Y$ is closed in ${\rm Prim}(A)$ and $Y\cap H(E)$ is empty.
Hence the closure of $\phi(Y)$ does not meet $E$. But
$\phi(Y)=\psi(W)$ and hence $\overline{\psi(W)} \cap E = \emptyset$.
Thus $\psi$ is locally closed at $E$.
\end{proof}


\noindent In particular, if $A$ in Corollary~\ref{Cor 4.4} is
separable and $\psi$ is a closed map (for example, the identity map
when $\phi=\phi_A$) then $H_E$ is strictly closed if and only if
$J_E$ is locally modular.



\section{Locally modular ideals}


 In this final section we look at locally modular ideals in the
case when $\phi$ is the complete regularization map and
$\Orc(A)<\infty$. We saw immediately after the definition of local
modularity that there are two `easy' ways for $J_x$ to be locally
modular: if $x\in U_{\phi}$ or if $x$ is an isolated point in
$X_{\phi}$. Example~\ref{Ex 3.8} gave two examples where $J_x$ is
locally modular with $x\in\partial U_{\phi}$, and in the first of
these $H(x)$ has empty interior in $\Prim(A)$. We will show that
such behaviour cannot occur when $\phi$ is the complete
regularization map and $\Orc(A)<\infty$. In this case, if $J_x$ is
locally modular then either $x\in U_{\phi}$ or $H(x)$ has non-empty
interior (Corollary~\ref{Cor 5.3}).

Recall that for a C$^*$-algebra $A$, we say that $P, Q\in\Prim(A)$
belong to the same {\sl Glimm class} if $f(P)=f(Q)$ for all
continuous, bounded, real-valued functions $f$ on $\Prim(A)$
(equivalently, $\phi_A(P) = \phi_A(Q)$, where $\phi_A$ is the
complete regularization map on $\Prim(A)$). The algebra $A+Z(M(A))$
in the next result was introduced by Dixmier \cite{DixIdeal}.


\begin{lemma} \label{Lemma 5.1}  Let $A$ be a C$^*$-algebra with
$\Orc(A)<\infty$ and let $C=A+Z(M(A))$. Then $\Orc(C)\le 2\
\Orc(A)+2$.
\end{lemma}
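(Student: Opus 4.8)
The plan is to analyze $\Prim(C)$ by exploiting the fact that $A$ is an essential closed ideal of $C$ and that the quotient $C/A$ is abelian. Since $A$ is essential in $M(A)$ and $A\subseteq C\subseteq M(A)$, a short argument (if $I\triangleleft C$ with $I\cap A=0$ then $IA=AI=0$, so $I=0$) shows that $A$ is essential in $C$. Consequently the set of \emph{old} points $\{Q\in\Prim(C):Q\not\supseteq A\}$ is an open dense subset of $\Prim(C)$ homeomorphic to $\Prim(A)$, while the \emph{new} points $\{Q\in\Prim(C):Q\supseteq A\}$ constitute $\Prim(C/A)$. Because $\Prim(A)$ is open in $\Prim(C)$, two old points can be separated by disjoint open subsets of $\Prim(C)$ if and only if they can be so separated in $\Prim(A)$; hence $\sim$ restricted to the old points coincides with $\sim$ on $\Prim(A)$, and in particular $d_C\le d_A$ between old points.

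The structural heart of the argument is the claim that each $\sim$-component of $\Prim(C)$ contains \emph{at most one} new point. To prove this I would first record the purely algebraic facts that $Z(M(A))\subseteq Z(C)$ (an element of $Z(M(A))\subseteq C$ commutes with $A$ and with $Z(M(A))$, hence with all of $C=A+Z(M(A))$) and that $Z(C)\subseteq Z(M(C))$ (a standard faithfulness argument on the action of $C$ on itself). By the Dauns--Hofmann theorem for $C$, each $z\in Z(M(A))$ then induces a continuous function on $\Prim(C)$. Since $C/A$ is precisely the image of $Z(M(A))$ under the quotient map, the Gelfand transforms of these images separate the points of $\Prim(C/A)$; equivalently, the induced functions on $\Prim(C)$ separate distinct new points. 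As any bounded continuous function on $\Prim(C)$ is constant on each $\sim$-component (if it took two values it would separate the points by disjoint open sets), two distinct new points must lie in different $\sim$-components, establishing the claim.

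With this in hand the diameter bound follows by a doubling argument. A $\sim$-component $\mathcal{C}$ containing no new point lies entirely in the old part, where $\sim$ agrees with $\sim$ on $\Prim(A)$; thus $\mathcal{C}$ is a $\sim$-component of $\Prim(A)$ and has diameter at most $\Orc(A)$. If $\mathcal{C}$ contains a (necessarily unique) new point $Q$, then every $\sim$-neighbour of $Q$ is old, and each old point $P\in\mathcal{C}$ lies in the same $\sim$-component of $\Prim(A)$ as some neighbour $n$ of $Q$ (take the penultimate vertex of a $\sim$-path from $P$ to $Q$, noting that all interior vertices are old). Hence $d_C(P,Q)\le d_C(P,n)+1\le d_A(P,n)+1\le\Orc(A)+1$. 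For two old points $P_1,P_2\in\mathcal{C}$ the triangle inequality gives $d_C(P_1,P_2)\le d_C(P_1,Q)+d_C(Q,P_2)\le 2\,\Orc(A)+2$, while the distances to $Q$ itself are bounded by $\Orc(A)+1$. Thus every $\sim$-component of $\Prim(C)$ has diameter at most $2\,\Orc(A)+2$, giving $\Orc(C)\le 2\,\Orc(A)+2$.

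I expect the main obstacle to be the justification that the central elements $z\in Z(M(A))$ genuinely yield continuous, point-separating functions on $\Prim(C)$: this rests on the inclusion $Z(C)\subseteq Z(M(C))$ together with the observation that $C/Q\cong\C$ for each new point $Q$ (so that the scalar value of $z$ at $Q$ is well defined and recovers the Gelfand transform of $z+A$). Once the ``at most one new point per component'' principle is available, the identification of $\sim$ on old points with $\sim$ on $\Prim(A)$ and the remaining distance bookkeeping are routine.
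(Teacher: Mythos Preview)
Your argument is correct and matches the paper's approach: both use that $A$ is essential in $C$ so that $\Prim(A)$ is open and dense in $\Prim(C)$, that $Z(M(A))$ separates any two new points (so each $\sim$-component contains at most one), and then the doubling bound $2\,\Orc(A)+2$. Your worry about Dauns--Hofmann dissolves once you note that $1_{M(A)}\in Z(M(A))\subseteq C$, so $C$ is unital and $Z(M(A))\subseteq Z(C)=Z(M(C))$ immediately yields continuous functions on $\Prim(C)$; the paper phrases the separation step more directly by writing each new point as $Q_i=M_i+A$ for a maximal ideal $M_i$ of $Z(M(A))$.
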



\begin{proof} First note that $A$ is an essential ideal in $C$, so
that ${\rm Prim}(A)$ is (homeomorphic to) a dense open subset of
${\rm Prim}(C)$. Suppose that $Q_1$, $Q_2$ are distinct elements of
${\rm Prim}(C)\setminus {\rm Prim}(A)$. Then $Q_i=M_i+A$ where $M_i$
is a maximal ideal of $Z(M(A))$ containing $A\cap Z$ $(i=1, 2)$. It
follows that $Z(M(A))$ separates $Q_1$ and $Q_2$. Thus each Glimm
class in ${\rm Prim}(C)$ contains at most one element of ${\rm
Prim}(C)\setminus {\rm Prim}(A)$. Hence if $P_1\sim
P_2\sim\ldots\sim P_n$ is a path in ${\rm Prim}(C)$ then at most one
element from ${\rm Prim}(C)\setminus {\rm Prim}(A)$ can occur among
the $P_i$'s. It follows that $d_C(P_1, P_n)\le 2\ \Orc(A)+2$ as
required.
\end{proof}


 The next theorem is a general result giving a dichotomy for $\sim$-components in $\Prim(A)$
 for any
 $C^*$-algebra $A$ for which $\Orc(A)<\infty$.


\begin{thm} \label{Thm 5.2}  Let $A$ be a $C_0(X)$-algebra and
suppose that $\phi$ is the complete regularization map for ${\rm
Prim}(A)$ and that $\Orc(A)<\infty$. Let $T$ be a $\sim$-component
of ${\rm Prim}(A)$, so that $T\subseteq H(x)$ for some $x\in
X_{\phi}$. Then either

{\rm (i)} $J_x\not\supseteq Z(A)$ and $T=H(x)$; or

{\rm (ii)} $J_x\supseteq Z(A)$ and there exist $P\in T$ and $R\in
{\rm Prim}(M(A))$ with $R\supseteq A$ such that $\tilde P\sim R$.
\end{thm}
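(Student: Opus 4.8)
The plan is to pass to the unital algebra $C=A+Z(M(A))$, which has $\Orc(C)<\infty$ by Lemma~\ref{Lemma 5.1}, and to read the dichotomy off the $\sim$-component structure of $\Prim(C)$. Two reductions organise matters. First, because $\phi$ is the complete regularization map, $H(x)$ is exactly the Glimm class over $x$; and via the Dauns-Hofmann isomorphism every $z\in Z(A)$ gives $\hat z\in C_0(\Prim(A))$ which, being bounded and continuous, is constant on Glimm classes, hence descends to $C_0(X_{\phi})$ and shows $z\in\mu(C_0(X))\cap A=Z'(A)$. Thus $Z(A)=Z'(A)$, and Lemma~\ref{Lemma 1.6} gives: $J_x\supseteq Z(A)$ if and only if there is $R\in\Prim(M(A))$ with $R\supseteq A$ and $\overline\phi(R)=x$. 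Second, since $1\in Z(M(A))\subseteq C$, the algebra $C$ is unital with $Z(C)=Z(M(A))\cong C^b(\Prim(A))$, so its Glimm classes are precisely the fibres of the induced map $\Prim(C)\to\beta X_{\phi}$, and (as in the proof of Lemma~\ref{Lemma 5.1}) each fibre contains at most one point of $\Prim(C)\setminus\Prim(A)$. Hence the Glimm class of $C$ over $x$ is $H(x)$ together with at most one ``corona'' point, the latter being present exactly when $J_x\supseteq Z(A)$.

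The crux is the claim that \emph{each Glimm class of $C$ is a single $\sim$-component}. Granting it, the theorem follows. If $J_x\not\supseteq Z(A)$ there is no corona point over $x$, so the Glimm class is $H(x)$ itself; being a single $\sim$-component of $C$ lying inside the open set $\Prim(A)$, and since $\sim$ on $\Prim(C)$ restricts to $\sim$ on the open subspace $\Prim(A)$, it is a single $\sim$-component of $\Prim(A)$ and so equals $T$, giving $T=H(x)$ as in (i). If $J_x\supseteq Z(A)$ the Glimm class is $H(x)\cup\{R_C\}$ with $R_C\supseteq A$; as $\sim$-edges within $\Prim(A)$ never cross distinct $\sim$-components of $H(x)$, connectivity of $H(x)\cup\{R_C\}$ forces $R_C$ to be $\sim$-adjacent to some $P\in T$. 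Transferring this single adjacency to $\Prim(M(A))$ — via a net in $\Prim(A)$ realising the inseparability and its image under $P\mapsto\tilde P$ — produces $R\in\Prim(M(A))$ with $R\supseteq A$ and $\tilde P\sim R$, which is (ii).

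To prove the claim I would argue by contradiction, following the method of Theorem~\ref{Thm 4.3}. Suppose a Glimm class $G$ of $C$ splits into distinct $\sim$-components. Using $\Orc(C)<\infty$ together with the compactness of $\Prim(C)$, the iterated neighbourhoods $Y^1,Y^2,\dots$ of any closed set are closed, so each $\sim$-component is closed and equal to its $k$-neighbourhood for $k=\Orc(C)$. Form the quotient $W=\Prim(C)/\diamond$ collapsing each $\sim$-component, with quotient map $p$. One shows $p$ is closed exactly as in Theorem~\ref{Thm 4.3}: given a net in $p^{-1}(p(T))$ converging to $Q$, choose for each index a partner in $T$ and a $\sim$-walk of length $\le k$ joining them, then use the compactness of $\Prim(C)$ to pass to successive subnets and extract a limiting walk of length $\le k$ witnessing $Q\in p^{-1}(p(T))$. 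Closedness of $p$, through Lemma~\ref{Lemma 4.1} applied to saturated preimages, makes $W$ normal; Urysohn's lemma then separates the two distinct (closed) points of $W$ coming from two components of $G$, and the separating function pulls back to a bounded continuous function on $\Prim(C)$ that is constant on Glimm classes yet non-constant on $G$ — contradicting the fact that $G$ is a single Glimm class.

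The main obstacle is precisely this claim, and within it the proof that the component-collapsing map $p$ is closed; this is the delicate, $\Orc$-driven walk-and-subnet argument that forms the core of Theorem~\ref{Thm 4.3}, now carried out in the compact space $\Prim(C)$ rather than in $\Prim(A)$. A secondary point, which I expect to be routine, is the faithful transfer in case (ii) of the inseparability relation from $\Prim(C)$ to $\Prim(M(A))$, using that $A$ is a common ideal of $C$ and of $M(A)$ and that $P\mapsto\tilde P$ embeds $\Prim(A)$ as a dense open subset of $\Prim(M(A))$.
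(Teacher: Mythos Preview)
Your outline is broadly correct and close in spirit to the paper's proof, but you have inverted the difficulty assessment. The claim you identify as the ``main obstacle'' --- that every Glimm class of $C$ is a single $\sim$-component --- is a known result (\cite[Corollary 2.7]{Som}), valid for any unital $C^*$-algebra with finite connecting order; the paper simply cites it. Your plan to reprove it via the walk-and-subnet method of Theorem~\ref{Thm 4.3} would work in the compact space $\Prim(C)$, but it is unnecessary.

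The genuine difficulty is the step you label ``secondary'' and ``routine'': the transfer of inseparability from $\Prim(C)$ to $\Prim(M(A))$ in case~(ii). Concretely, from $P'\sim R_C$ in $\Prim(C)$ you obtain a net $(P_\alpha)$ in $\Prim(A)$ with $P'_\alpha\to P'$ and $P'_\alpha\to R_C$; you then pass to $(\tilde P_\alpha)$ in $\Prim(M(A))$ and use compactness to extract a cluster point $R$. But nothing in this argument forces $R\supseteq A$: the cluster points of $(\tilde P_\alpha)$ could all lie in $\tilde H(x)$ rather than in $\Prim(M(A)/A)$. The paper handles precisely this obstruction by arguing the contrapositive. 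Assuming $\tilde P\not\sim R$ for all $P\in T$ and all $R\in\Prim(M(A)/A)$, one shows that $\tilde T$ is closed, sets $L=\ker\tilde T$, and proves $A+L=M(A)$; then an element $c\in L\cap C$ with $\Vert c+R_C\Vert=1$ forces any limit $R$ of $(\tilde P_\alpha)$ with $\Vert c+R\Vert\ge 1/2$ to satisfy $R\not\supseteq A$, hence $R=\tilde S$ with $S\in T$, whence $c\in L\subseteq \tilde S=R$ --- a contradiction. This $L$-and-$c$ device is the technical heart of the proof, and it is exactly what your ``transfer'' needs (in contrapositive form). Once you supply it, your argument in case~(ii) goes through; without it there is a real gap.
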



\begin{proof} Suppose that $P\in T$ and $R\in {\rm Prim}(M(A))$
with $R\supseteq A$ and $\tilde P\sim R$. Then
$\overline\phi(R)=\overline\phi(\tilde P)=\phi(P)=x$ and it follows
from Lemma~\ref{Lemma 1.6} that $J_x\supseteq Z(A)$. We must show,
therefore, that if there do not exist $P\in T$ and $R\in {\rm
Prim}(M(A))$ with $R\supseteq A$ such that $\tilde P\sim R$ then
alternative (i) holds.

Suppose, then, that for all $P\in T$ and $R\in {\rm Prim}(M(A))$
with $R\supseteq A$, $\tilde P\not\sim R$. Set $k=\Orc(A)$, and let
$Q\in T$. Note that $\tilde T=\{ R\in {\rm Prim}(M(A)):
d_{M(A)}(\tilde Q, R)\le k\}$, by the supposition that for all $P\in
T$ and $R\in {\rm Prim}(M(A))$ with $R\supseteq A$, $\tilde
P\not\sim R$. Hence $\tilde T$ is a closed (and thus compact) subset
of ${\rm Prim}(M(A))$ by \cite[Corollary 2.3]{Som} applied $k$
times. Set $L=\ker\tilde T$, a closed ideal of $M(A)$. If $A+L$ were
a proper ideal of $M(A)$ there would exist $R\in \Prim(M(A))$ such
that $R \supseteq A+L$. Hence $R\supseteq L$ and so $R\in \tilde T$
since $\tilde T$ is a closed subset of ${\rm Prim}(M(A))$, but also
$R\supseteq A$. This is a contradiction, and hence $A+L=M(A)$.

Now set $C=A+Z(M(A))$, and for each $P\in {\rm Prim}(A)$ let $P^{\prime}$
be the unique primitive ideal in  $C$ such that $P^{\prime}\cap A=P$.

Let $T^{\prime}:=\{P^{\prime} : P\in T\}$. We claim that $K\not\sim
P^{\prime}$ whenever $P\in T$ and $K\in {\rm Prim}(C)$ with
$K\supseteq A$. Supposing otherwise, there exist $P\in T$, $K\in
{\rm Prim}(C)$ with $K\supseteq A$, and a net $(P_{\alpha})$ in
${\rm Prim}(A)$ such that $P^{\prime}_{\alpha}\to P^{\prime}$ and
$P^{\prime}_{\alpha}\to K$. Since $K \supseteq A$ and $C\subseteq
M(A) = A+L$, $K \not\supseteq L\cap C$. Hence there exists $c\in
L\cap C$ such that $\Vert c+K\Vert=1$. By lower semi-continuity,
$\Vert c+P^{\prime}_{\alpha} \Vert\ge 1/2$ eventually. Hence $\Vert
c+\tilde P_{\alpha}\Vert\ge 1/2$ eventually (because both $\Vert
c+P^{\prime}_{\alpha}\Vert$ and $\Vert c+\tilde P_{\alpha}\Vert$ are
equal to $\Vert\overline\pi_{\alpha}(c)\Vert$ where $\pi_{\alpha}$
is an irreducible representation of $A$ with kernel $P_{\alpha}$ and
$\overline\pi_{\alpha}$ is its unique ultra-weakly continuous
extension to $A^{**}$). By the compactness of the set $\{S\in {\rm
Prim}(M(A)): \Vert c+S\Vert\ge 1/2\}$, we may assume, without loss
of generality, that $\tilde P_{\alpha}\to R$ in ${\rm Prim}(M(A))$,
where $\Vert c+R\Vert\ge 1/2$.

Since $P^{\prime}_{\alpha}\to P^{\prime}$ we have $P_{\alpha}\to P$
and $\tilde P_{\alpha}\to \tilde P$. Thus $\tilde P\sim R$. By the
supposition of the second paragraph, it must be that $R=\tilde S$
for some $S\in {\rm Prim}(A)$. Hence $S\in T$.
Thus we have  $c\in L \subseteq \tilde{S} = R$, contradicting the fact that
$\Vert c+R\Vert\ge 1/2$. It follows, then, that
$K\not\sim P^{\prime}$ whenever $P\in T$ and $K\in {\rm Prim}(C)$ with
$K\supseteq A$, and hence that $T^{\prime}$ is a $\sim$-component in
${\rm Prim}(C)$.

By Lemma~\ref{Lemma 5.1}, $\Orc(C)\le 2\ \Orc(A)+2<\infty$, and
since ${\rm Prim}(C)$ is compact it follows that $T^{\prime}$ is a
Glimm class in ${\rm Prim}(C)$ \cite[Corollary 2.7]{Som}. It follows
at once that $T$ is a Glimm class in ${\rm Prim}(A)$. Thus $T=H(x)$.
Now, let $P\in T$, and let $\phi_C: {\rm Prim}(C)\to Y$ be the
complete regularization map for ${\rm Prim}(C)$, where $Y$ is the
space of Glimm ideals of $C$ with the induced completely regular
topology. Then the set $W=\phi_C({\rm Prim}(C)\setminus {\rm
Prim}(A))$ is compact and hence closed in $Y$, and does not contain
the point $\phi_C(P')$. Thus there exists a continuous function $f$
on $Y$ taking the value $1$ on $\phi_C (P^{\prime})$ and $0$ on $W$.
Since $f$ vanishes on $W$, it follows that the central element $z$
of $C$ induced by $f$ actually belongs to $A$. But $z\notin
P^{\prime}$, and $P^{\prime}\supseteq P\supseteq J_x$, and thus
$z\notin J_x$. Hence $J_x\not\supseteq Z(A)$.
\end{proof}


\begin{cor} \label{Cor 5.3}  Let $A$ be a $C_0(X)$-algebra and
suppose that $\phi$ is the complete regularization map for ${\rm
Prim}(A)$ and that $\Orc(A)<\infty$. Let $x\in X_{\phi}$ with $J_x$ locally modular. Then either $x\in U_{\phi}$ or $H(x)$ has
non-empty interior.
\end{cor}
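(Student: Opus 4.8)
The plan is to prove the statement in the contrapositive form: \emph{if $J_x$ is locally modular and $H(x)$ has empty interior, then $x\in U_{\phi}$} (when $H(x)$ has non-empty interior there is nothing to prove). So I would assume throughout that $U(x)=\emptyset$. Two simplifications follow at once. Since $H(x)=\phi^{-1}(x)$ is closed with empty interior, its boundary $\partial H(x)$ equals $H(x)$; and, as observed when $\sim_x$ was introduced, the relation $\sim_x$ then coincides with $\sim$ on all of $\Prim(M(A))$.

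The first substantive step is to exploit local modularity through Lemma~\ref{Lemma 2.3}. Its implication (i)$\Rightarrow$(ii) needs no $\sigma$-unitality, so local modularity of $J_x$ yields: for every $P\in\partial H(x)=H(x)$ and every $R\in\Prim(M(A)/A)$ one has $\tilde P\not\sim_x R$, and hence $\tilde P\not\sim R$. Now fix any $P\in H(x)$ and let $T$ be its $\sim$-component. Because $\phi$ is the complete regularization map it is constant on $\sim$-components (these lie inside Glimm classes), so $T\subseteq H(x)$. Applying Theorem~\ref{Thm 5.2} to $T$, alternative (ii) would supply some $P_0\in T\subseteq H(x)$ and $R\in\Prim(M(A))$ with $R\supseteq A$ and $\tilde P_0\sim R$, contradicting the conclusion just reached. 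Hence alternative (i) holds, and in particular $J_x\not\supseteq Z(A)$.

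The remaining step is to pass from $Z(A)$ to $Z'(A)$, since $U_{\phi}$ is defined through the latter. I would show that the complete-regularization hypothesis forces $Z'(A)=Z(A)$; as $Z'(A)\subseteq Z(A)$ always, only $Z(A)\subseteq Z'(A)$ is needed. Given $z\in Z(A)=Z(M(A))\cap A$, the Dauns--Hofmann isomorphism writes $z=\theta_A(g'\circ\phi)$ for some $g'\in C^b(X_{\phi})$, the factorization through $\phi$ being available precisely because $\phi$ is the complete regularization map. On the irreducible representation with kernel $P$, $z$ acts as the scalar $g'(\phi(P))$, so $\Vert z+P\Vert=|g'(\phi(P))|$; consequently $\{x\in X_{\phi}:|g'(x)|\ge\epsilon\}=\phi(\{P:\Vert z+P\Vert\ge\epsilon\})$ is compact for each $\epsilon>0$, because $z\in A$. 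Thus $g'\in C_0(X_{\phi})$, it extends by zero to a function $f\in C_0(X)$, and $z=\mu(f)\in\mu(C_0(X))\cap A=Z'(A)$. Therefore $J_x\not\supseteq Z'(A)$, that is $x\in U_{\phi}$, as required.

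I expect the genuine obstacle to lie in this final bridging step rather than in the topological bookkeeping: Theorem~\ref{Thm 5.2} only delivers $J_x\not\supseteq Z(A)$, whereas membership of $U_{\phi}$ is phrased via the a priori smaller central ideal $Z'(A)$, so closing the gap relies on the complete-regularization hypothesis together with the fact that norm functions of elements of $A$ have compact super-level sets on $X_{\phi}$. The coincidence $\sim_x=\sim$, the constancy of $\phi$ on $\sim$-components, and the dichotomy of Theorem~\ref{Thm 5.2} should all go through routinely.
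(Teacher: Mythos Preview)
Your argument is correct and follows essentially the same route as the paper: both combine Lemma~\ref{Lemma 2.3} (using that empty interior gives $\partial H(x)=H(x)$ and $\sim_x=\sim$) with the dichotomy of Theorem~\ref{Thm 5.2}, just with the contrapositive taken at opposite ends. The paper assumes $x\notin U_{\phi}$ and empty interior, invokes alternative (ii) of Theorem~\ref{Thm 5.2} to produce $\tilde P\sim R$ with $R\supseteq A$, and then contradicts local modularity via Lemma~\ref{Lemma 2.3}; you assume local modularity and empty interior, rule out alternative (ii), and conclude $J_x\not\supseteq Z(A)$.

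One point worth noting: the bridging step $Z(A)=Z'(A)$ that you single out as the ``genuine obstacle'' is in fact left implicit in the paper's proof (it passes silently from $x\notin U_{\phi}$, i.e.\ $J_x\supseteq Z'(A)$, to alternative (ii) of Theorem~\ref{Thm 5.2}, which requires $J_x\supseteq Z(A)$). Your explicit verification of this equality---via Dauns--Hofmann and compactness of the super-level sets of $\Vert z+P\Vert$---is therefore a genuine addition. The phrase ``extends by zero to $f\in C_0(X)$'' is slightly loose: with the paper's convention that $X$ is either $X_{\phi}$ itself (when locally compact) or $\beta X_{\phi}$, the required extension of $g'\in C_0(X_{\phi})$ to $C_0(X)$ exists (trivially in the first case, and as the unique continuous extension to $\beta X_{\phi}$ in the second, where it does vanish on $\beta X_{\phi}\setminus X_{\phi}$), so the argument goes through.
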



\begin{proof} Suppose that $x\notin U_{\phi}$ and that $H(x)$ has
empty interior. Let $T$ be a $\sim$-component of $H(x)$. By
Theorem~\ref{Thm 5.2} there exists $P\in T$ and $R\in {\rm
Prim}(M(A)/A)$ such that $\tilde P\sim R$. Since $H(x)$ has empty
interior, $P\in
\partial H(x)$ and $\tilde P\sim_x R$. By Lemma~\ref{Lemma 2.3}, $J_x$ is not
locally modular.
\end{proof}


 We conclude with two examples of $x\in\partial U_{\phi}$ with
$J_x$ locally modular. The first has $\Orc(A)<\infty$ and $H(x)$
with non-empty interior. The second has $\Orc(A)=\infty$ and $H(x)$
with empty interior, showing that the condition $\Orc(A)<\infty$ in
Corollary~\ref{Cor 5.3} is not redundant.


\begin{example} \label{Ex 5.4} (i) {\sl A $C_0(X)$-algebra with $z\in
\partial U_{\phi}$ such that $J_z$ is locally modular, $\phi$ is locally
closed at $z$, and $H(z)$ has non-empty interior.}

As in Example~\ref{Ex 3.8}(i), let $Y=\{ (x,y)\in {\bf R}^2: y\ge
0\}$ be the upper half-plane, and let $L=\{ (x,y)\in Y: y=0\}$ be
the $x$-axis. Set $B=C_0(Y)$ and $C=C_0(L)$, and let $\pi: B\to C$
be the surjective $*$-homomorphism given by $\pi(b)=b|_L$ $(b\in
B)$. Let $H$ be a separable, infinite-dimensional Hilbert space,
$B(H)$ the algebra of bounded operators on $H$, and $K(H)$ the
algebra of compact operators on $H$. Let $\rho: C\to B(H)$ be a
$^*$-monomorphism such that $\rho(C)\cap K(H)=\{0\}$. Set
$D=\rho(C)+K(H)$, a C$^*$-subalgebra of $B(H)$. Note that each
element $d\in D$ can be uniquely expressed in the form $d=g+T$ where
$g\in \rho(C)$ and $T\in K(H)$.

Set $A=\{ (b,d)\in B\oplus D: \rho(\pi(b))=g\}$.
Then $A$ is separable. For $(x,y)\in Y$, let $\theta_{x, y}$ be the character on
$A$ given by $\theta_{x,y}(b,d)=b((x,y))$. Set $G=\{ (b,d)\in A:
\pi(b)=0,\ T=0\}$. Since any irreducible representation of $A$ extends
to an irreducible representation of
$B\oplus D$ (on a possibly larger Hilbert space),
${\rm Prim}(A)=\{\ker\theta_{x,y}: (x,y)\in
Y\}\cup\{ G\}$. Note that $G \subseteq \ker\theta_{x,0}$ for all $x\in R$.
It follows that a subset $W$ of $\Prim(A)$ is closed if and only if
(i) $\{(x,y)\in Y: \ker\theta_{x,y}\in W \} \cap Y$ is closed in $Y$ (with the usual topology),
and (ii) if $G\in W$ then $\ker\theta_{x,0} \in W$ for all $x\in {\bf R}$.
In particular $\{G\}$ is an open subset of $\Prim(A)$.

Set $X_{\phi}=Y/L$ and let $q:Y\to X_{\phi}$ be the quotient map.
Set $X=\beta X_{\phi}$. Define $\phi: {\rm Prim}(A)\to
X_{\phi}\subseteq X$ by $\phi(\theta_{x,y})= q(x,y)$ $((x,y)\in Y)$
and $\phi(G)=q(0,0)$. Then $\phi$ is the complete regularization map
for ${\rm Prim}(A)$ and $\phi(G)$ is non-isolated in $X_{\phi}$. For
$(x,y)\in Y\setminus L$, $J_{q(x,y)}=\ker\theta_{x,y}$ while
$J_{q(0,0)} = G$. Each point of $Y$ has a compact neighbourhood in
$Y$ and hence $J_x$ is locally modular for each $x\in X_{\phi}$,
although $A/G$ is non-unital. Since $Y$ is normal, the map $\phi$ is
easily seen to be relatively closed and hence $H_x$ is strictly
closed for each $x\in X_{\phi}$ by Proposition~\ref{Prop 2.4}.
Taking $z=q(0,0) = q(G)$, however, we have that $$H(q(0,0)) =
\{\ker\theta_{x,0}:x\in {\bf R}\} \cup \{G\}$$ and this has
non-empty interior $\{G\}$.

\medskip

(ii) {\sl A $C_0(X)$-algebra with $x\in
\partial U_{\phi}$ such that $J_x$ is locally modular, $\phi$ is
locally closed at $x$, and $H(x)$ has empty interior.}

Let $A$ be the C$^*$-algebra defined as follows (see \cite[Example
2.8]{Som}). Let $B$ be the C$^*$-algebra consisting of all
continuous functions from the interval $[0,1]$ into the $2\times 2$
complex matrices. Let $B(1)$ be the C$^*$-subalgebra of $B$
consisting of those functions $f\in B$ satisfying $f(2^{-n})={\rm
diag}(\lambda_{2n-1}(f), \lambda_{2n}(f))$, $(n\ge 1)$, and
$f(0)={\rm diag}(\lambda(f),\lambda(f))$, for some complex numbers
$\lambda(f)$, $\lambda_n(f)$ $(n\ge 1)$. Let $A=\{f\in B(1):
\lambda_{2n}(f)=\lambda_{2n+1}(f)\quad (1\le n<\infty) \hbox{ and
}\lambda(f)=0\}$. Then $A$ is separable.

For $y\in (0,1]\setminus \{2^{-n}: n\ge 1\}$, set $P_y=\{ f\in A:
f(y)=(0)\}$. Then
 $${\rm Prim}(A)=\{ P_y: y\in (0,1]\setminus
\{2^{-n}: n\ge 1\}\}\cup \{\ker(\lambda_i): i=1, 3, 5,\ldots \}.$$
Set $X_{\phi}=(0,1]/*$ where for $r,s\in (0,1]$, $r*s$ if $r=s$ or
if $r,s\in\{2^{-n}: n\ge 1\}$ and let $q:(0,1]\to X_{\phi}$ denote
the quotient map. Set $X=\beta X_{\phi}$ and $\infty=q(1/2)$. Define
$\phi: {\rm Prim}(A)\to X$ by $\phi(P_y)=y$ $(y\in (0,1]\setminus
\{2^{-n}: n\ge 1\})$ and $\phi(\lambda_i)=\infty$ $(i=1,3, 5,
\ldots)$. Then $\phi$ is the complete regularization map for ${\rm
Prim}(A)$.

If  $ x \in X_{\phi} \setminus \{\infty\} $ then $J_x \not\supseteq
Z(A) = Z'(A)$ and hence $x\in U_{\phi}$ and $J_x$ is locally
modular. It is easy to see directly that $J_{\infty}$ is also
locally modular. But $A/J_{\infty}$ is non-unital, since ${\rm
Prim}(A/J_{\infty})=\{ \ker(\lambda_i): i=1, 3, 5, \ldots\}$ is
non-compact, and hence $J_{\infty}\supseteq Z(A)$. Thus $U_{\phi}=
X_{\phi}\setminus \{\infty\}$ and $W_{\phi}=\{\infty\}$. We show
that $\phi$ is a relatively closed map. Let $Y$ be a closed subset
of ${\rm Prim}(A)$ and set $Y'=\phi^{-1}( \phi(Y))$. Then $Y'=Y$ if
$Y\cap \{ \ker(\lambda_i): i=1, 3, 5, \ldots\}$ is empty, and
$Y'=Y\cup \{ \ker(\lambda_i): i=1, 3, 5, \ldots\}$ otherwise. In
either case $Y'$ is closed, and hence $\phi$ is relatively closed.
It follows, therefore, from Proposition~\ref{Prop 2.4} that $H_x$ is
strictly closed for every $x\in X_{\phi}$.
\end{example}



\end{document}